\setlist[enumerate]{align=left, topsep=1ex,itemsep=0ex,partopsep=1ex,parsep=1ex}
\numberwithin{equation}{subsection}
\newcommand{\Catinfty}{{\EuScript Cat}_{\infty}}
\newcommand{\Stab}{{\EuScript St}}
\def\A{\mathbb{A}}
\def\B{\mathbb{B}}
\def\C{\mathbb{C}}
\def\D{\mathbb{D}}
\def\N{\mathbb{N}}
\def\Q{\mathbb{Q}}
\def\X{\mathbb{X}}
\def\Y{\mathbb{Y}}
\def\Z{\mathbb{Z}}
\def\on{\operatorname}
\def\Fun{\on{Fun}}
\def\Pr{\on{Pr}}
\def\AA{\mathbb{A}}
\def\CC{\mathbb{C}}
\def\NN{\mathbb{N}}
\def\EE{\mathbb{E}}
\renewcommand{\A}{\EuScript A}
\renewcommand{\B}{\EuScript B}
\renewcommand{\C}{\EuScript C}
\renewcommand{\D}{\EuScript D}
\newcommand{\I}{\EuScript I}
\newcommand{\wS}{\on{S}}
\renewcommand{\Y}{\EuScript Y}
\newcommand{\E}{\EuScript E}
\renewcommand{\X}{\EuScript X}
\newcommand{\Comp}{\on{Comp}}
\newcommand{\Expl}{\on{Expl}}
\newcommand{\Grp}{\on{Grp}}
\newcommand{\Soe}{\on{Soe}}
\newcommand{\Sp}{\on{Sp}}  
\newcommand{\FrobExt}{\on{FrobExt}}
\newcommand{\Frob}{\on{Frob}}
\newcommand{\Rick}{\on{C}}
\newcommand{\sRick}{\on{Rick}}
\newcommand{\Par}{\on{Par}}
\newcommand{\fib}{\on{fib}}
\newcommand{\cofib}{\on{cofib}}
\newcommand{\Mor}{\on{Mor}}
\newcommand{\Modk}{\on{Mod}_k}
\newcommand{\deffun}{\on{def}}
\newcommand{\inffun}{\on{inf}}
\definecolor{references}{rgb}{.7,.1,.6}
\tikzstyle directed=[postaction={decorate,decoration={markings,
    mark=at position #1 with {\arrow{>}}}}]
\tikzstyle rdirected=[postaction={decorate,decoration={markings,
    mark=at position #1 with {\arrow{<}}}}]
\tikzset{anchorbase/.style={baseline={([yshift=-0.5ex]current bounding box.center)}},
    tinynodes/.style={font=\tiny,text height=0.75ex,text depth=0.15ex},
    smallnodes/.style={font=\scriptsize,text height=0.75ex,text depth=0.15ex},
    >={Latex[length=1mm, width=1.5mm]}
  }
  \tikzset{
    partial ellipse/.style args={#1:#2:#3}{
        insert path={+ (#1:#3) arc (#1:#2:#3)}
    }
}
  \newcommand{\pu}{to [out=90,in=270]}
\colorlet{green}{black!30!green}
\newcommand{\BLUE}[1]{\textcolor[rgb]{0.00,0.00,0.80}{#1}}
\newcommand{\GREEN}[1]{\textcolor[rgb]{0.00,0.70,0.00}{#1}}
\newcommand{\GRAY}[1]{\textcolor[rgb]{0.50,0.50,0.50}{#1}}
\definecolor{CQG}{RGB}{0,153,76}
\definecolor{FS}{RGB}{0,76,153} 
\newcommand{\googlebooks}[1]{(preview at \href{https://books.google.com/books?id=#1}{google books})}
\newcommand{\numdam}[1]{}
\def\emph#1{{\sl #1\/}}
\def\mf#1{\mathfrak{#1}}
\def\lra{{\longrightarrow}}
\renewcommand{\to}{\rightarrow}
\newcommand{\AS}{\EuScript A}
\newcommand{\BS}{\EuScript B}
\newcommand{\CS}{\EuScript C}
\newcommand{\VS}{\EuScript V}
\renewcommand{\d}{\delta}
\renewcommand{\phi}{\varphi}
\renewcommand{\theta}{\vartheta}
\newcommand{\slnn}[1]{\mf{sl}_{#1}}
\newcommand{\glN}{\mf{gl}_N}
\newcommand{\SBim}{\mathrm{SBim}}
\newcommand{\sSBim}{\mathrm{sSBim}}
\newcommand{\sBSBim}{\mathrm{sBSBim}}
\newcommand{\Mod}{\text{-}\mathrm{Mod}}
\newcommand{\del}{\partial}
\newcommand{\id}{\mathrm{id}}
\newcommand{\Br}{\operatorname{Br}}
\newcommand{\Ch}{\operatorname{Ch}}
\newcommand{\Hom}{\operatorname{Hom}}
\newcommand{\largewedge}{\mbox{\Large $\wedge$}}
\newcommand{\rk}{\operatorname{\rk}}
\newcommand{\Sym}{\operatorname{Sym}}
\newcommand{\tw}{\operatorname{tw}}
\newcommand{\wt}{\operatorname{wt}}
\newcommand{\op}{\mathrm{op}}
\newcommand{\inv}{^{-1}}
\newcommand{\qdeg}{\mathbf{q}}
\newcommand{\tdeg}{\mathbf{t}}
\newcommand{\M}{\mathbb{M}}
\newcommand{\leftM}{\mathbb{M}}
\newcommand{\rightM}{\mathbb{M}'}
\newcommand{\MCS}{\mathrm{MCS}}
\newcommand{\MCSmin}{\mathsf{MCS}}
\newcommand{\KMCSmin}{\mathsf{KMCS}}
\newcommand{\IMCS}{\mathrm{IMCS}}
\newcommand{\IMCSmin}{\mathsf{IMCS}}
\newcommand{\KCmin}{\mathsf{KC}}
\newcommand{\St}{\mathrm{Staircase}}
\definecolor{kirb}{RGB}{254,112,198}
\tikzset{kirbystyle/.style={ultra thick,kirb},
  }
\newcommand{\diffslices}{
\tikzmath{
\ylift = 0.2;
\yplus = 0.6;
\xshift = -1;
\fx = -.25;
\fxx = -.2;
\fy = -.5;
\fyy = -.25;
	 } 
\begin{tikzpicture} [scale=.5,fill opacity=0.1,anchorbase]
\path[fill=green, opacity=0.3] (.5+\xshift,1.25+\ylift+\yplus) to [out=135,in=0] (\xshift,1.5+\ylift+\yplus) to [out=180,in=45] 
(-.5+\xshift,1.25+\ylift+\yplus) to [out=225,in=270]  (-2,2.75+\ylift) to [out=90,in=180] (0,4.25+\ylift) to [out=0,in=90] (2,2.75+\ylift) to [out=270,in=315] (.5+\xshift,1.25+\ylift+\yplus) ;
\path[fill=green] (-.5+\xshift,0) to (-.5+\xshift,5) to (-2,6) to (-2,1) to (-.5+\xshift,0);
\path[fill=green] (.5+\xshift,0) to (.5+\xshift,5) to (2,6) to (2,1) to (.5+\xshift,0);
\path[fill=blue] (-3,1) to (-2,1) to [out=20,in=180] (0,1.4) to [out=0,in=160] (2,1) to (3,1) to (3,6) to (2,6) to [out=160,in=0] (0,6.4) to [out=180,in=20] (-2,6) to  (-3,6);
\path[fill=blue]  (3+\xshift+\fx,5+\fy)  to (3+\xshift+\fx,\fy)  to [out=180,in=340] (.5+\xshift,0) to (-.5+\xshift,0) to [out=200,in=0] (-3+\xshift+\fx,\fy) to (-3+\xshift+\fx,5+\fy)to [out=0,in=200] (-.5+\xshift,5) to (.5+\xshift,5) to [out=340,in=180] (3+\xshift+\fx,5+\fy);
	\draw[very thick, red] (.5+\xshift,1.25+\ylift+\yplus) to [out=135,in=0] (0+\xshift,1.5+\ylift+\yplus) to [out=180,in=45] (-.5+\xshift,1.25+\ylift+\yplus);
	\draw[very thick, red, rdirected=.55] (-2,2.75+\ylift) to [out=90,in=180] (0,4.25+\ylift) to [out=0,in=90] (2,2.75+\ylift);
\draw[very thick, red, directed=.80] (-2,6) to (-2,1);
\draw[very thick, red, directed=.15] (2,1) to (2,6);
	\draw[very thick] (3,1) to (2,1) to [out=160,in=0] (0,1.4) to [out=180,in=20] (-2,1) to (-3,1);
\draw[very thick, red, directed=.22] (-.5+\xshift,0) to (-.5+\xshift,5);
\draw[very thick, red, directed=.83] (.5+\xshift,5) to (.5+\xshift,0);
\draw[very thick, red, rdirected=.5] (2,2.75+\ylift) to [out=270,in=315] (.5+\xshift,1.25+\ylift+\yplus);
\draw[very thick, red] (-.5+\xshift,1.25+\ylift+\yplus) to [out=225,in=270] (-2,2.75+\ylift);
	\draw[very thick] (3+\xshift+\fx,\fy) to [out=180,in=340] (.5+\xshift,0) to (-.5+\xshift,0) to [out=200,in=0] (-3+\xshift+\fx,\fy);
	\draw[very thick] (2,1) to  (.5+\xshift,0);
	\draw[very thick] (-.5+\xshift,0) to  (-2,1);
	\draw[very thick] (3,6) to (2,6) to [out=160,in=0] (0,6.4) to [out=180,in=20] (-2,6) to (-3,6);
	\draw[very thick] (3+\xshift+\fx,5+\fy) to [out=180,in=340] (.5+\xshift,5) to (-.5+\xshift,5) to [out=200,in=0] (-3+\xshift+\fx,5+\fy);
	\draw[very thick] (2,6) to  (.5+\xshift,5);
	\draw[very thick] (-.5+\xshift,5) to  (-2,6);
\draw[very thick] (-3,6.04) to (-3,1-.04);
\draw[very thick] (3,6.04) to (3,1-.04);
\draw[very thick] (-3+\xshift+\fx,5+\fy+.04) to (-3+\xshift+\fx,\fy-.04);
\draw[very thick] (3+\xshift+\fx,5+\fy+.04)  to (3+\xshift+\fx,\fy-.04) ;
	\end{tikzpicture}
}
\newcommand{\Resa}[1]{
    \draw[very thick] (1+#1,0) to [out=180,in=315] (0.5+#1,.33) to (0+#1,.33);
	\draw[very thick] (1+#1,.5) to [out=180,in=45] (0.5+#1,0.33);
	\draw[very thick] (1+#1,1) to [out=180,in=0] (0+#1,0.66);
}
\newcommand{\Resb}[1]{
    \draw[very thick] (1+#1,0) to [out=180,in=0] (0+#1,.33);
	\draw[very thick] (1+#1,.5) to [out=180,in=315] (0.5+#1,0.63);
	\draw[very thick] (1+#1,1) to [out=180,in=45] (0.5+#1,.66) to (0+#1,0.66);
}
\newcommand{\Resc}[1]{
	\draw[very thick] (1+#1,.33) to [out=180,in=315] (0.5+#1,.5) to (0+#1,.5);
	\draw[very thick] (1+#1,.66) to [out=180,in=45] (0.5+#1,0.5);
}
\newcommand{\Inda}[1]{
    \draw[very thick] (0+#1,0) to [out=0,in=225] (0.5+#1,.33) to (1+#1,.33);
	\draw[very thick] (0+#1,.5) to [out=0,in=135] (0.5+#1,0.33);
	\draw[very thick] (0+#1,1) to [out=0,in=180] (1+#1,0.66);
}
\newcommand{\Indb}[1]{
    \draw[very thick] (0+#1,0) to [out=0,in=180] (1+#1,.33);
	\draw[very thick] (0+#1,.5) to [out=0,in=225] (0.5+#1,0.66);
	\draw[very thick] (0+#1,1) to [out=0,in=135] (0.5+#1,.66) to (1+#1,0.66);
}
\newcommand{\Indc}[1]{
	\draw[very thick] (0+#1,.33) to [out=0,in=225] (0.5+#1,.5) to (1+#1,.5);
	\draw[very thick] (0+#1,.66) to [out=0,in=135] (0.5+#1,0.5);
}
\theoremstyle{plain}
\newtheorem{thm}{Theorem}[section]
\newtheorem{prop}[thm]{Proposition}
\newtheorem{proposition}[thm]{Proposition}
\newtheorem{cor}[thm]{Corollary}
\newtheorem{lemma}[thm]{Lemma}
\newtheorem{conj}[thm]{Conjecture}
\theoremstyle{definition}
\newtheorem{defi}[thm]{Definition}
\newtheorem{definition}[thm]{Definition}
\newtheorem{construction}[thm]{Construction}
\newtheorem{rem}[thm]{Remark}
\newtheorem{remark}[thm]{Remark}
\newtheorem{exa}[thm]{Example}
\newtheorem{example}[thm]{Example}
\newtheorem{conv}[thm]{Convention}
\newtheorem*{exa-nono}{Example}
\newtheorem*{theoremintro}{Theorem}
\newtheorem*{remarkintro}{Remark}
\newtheorem*{definitionsintro}{Definitions}
\newcommand{\commented}[1]{}
\patchcmd{\@setref}{\bfseries ??}{\bfseries\color{red} OWE A COFFEE/BEER}{}{}
\newcommand{\ba}{\textbf{a}}
\newcommand{\bone}{\textbf{1}}
\newcommand{\ula}{\underline{\lambda}}
\title{Perverse schobers of Coxeter type $\AA$}
\author{Tobias Dyckerhoff\footnote{Universität Hamburg, Fachbereich Mathematik, Bundesstraße 55, 20146 Hamburg, Germany
	email: {\tt tobias.dyckerhoff@uni-hamburg.de}}, Paul Wedrich\footnote{Universität Hamburg, Fachbereich Mathematik, Bundesstraße 55, 20146 Hamburg, Germany
	email: {\tt paul.wedrich@uni-hamburg.de}} }
\begin{document}

\maketitle

\begin{abstract} 
    We define the concept of an $\AA_n$-schober as a categorification of
    classification data for perverse sheaves on $\Sym^{n+1}(\CC)$ due to
    Kapranov-Schechtman. We show that any $\AA_n$-schober gives rise to a
    categorical action of the Artin braid group $\Br_{n+1}$ and demonstrate how
    this recovers familiar examples of such actions arising from  Seidel-Thomas
    $\AA_n$-configurations of spherical objects in categorical Picard-Lefschetz
    theory and Rickard complexes in link homology theory. As a key example, we
    use singular Soergel bimodules to construct a factorizing family of
    $\AA_n$-schobers which we refer to as Soergel schobers. We expect such
    families to give rise to a categorical analog of a graded bialgebra valued
    in a suitably defined freely generated braided monoidal
    $(\infty,2)$-category. 



\end{abstract}

\tableofcontents

\section{Introduction}

Spherical objects have been introduced by Seidel--Thomas \cite{ST01} where, in
the context of a categorical approach to Picard-Lefschetz theory, they were
used to construct celebrated braid group actions. As a natural generalization,
spherical functors, introduced by Anno-Logvinenko \cite{AL17}, seem to play an
increasingly important role in various topics in the vicinity of homological
mirror symmetry. The striking proposal of Kapranov-Schechtman \cite{1411.2772}
to interpret spherical functors as categorical analogs of perverse sheaves,
termed perverse schobers, raises expectations of a rich theory, built in
analogy to the successful classical theory of perverse sheaves, with potential
applications to higher categorical braid group actions. Various glimpses of
this hypothetical theory have emerged over the time, but it seems, we are still
rather far away from a systematic approach. 

In a parallel development, link homology theories originating in Khovanov's
categorification of the Jones polynomial~\cite{Kho} and Rouquier's categorical
braid group action \cite{0409593} have revolutionized low-dimensional topology
and motivated much of modern representation theory, yet remain poorly understood on a deep
conceptual level.

We take inspiration from link homology theory and the higher
representation theory of categorified quantum groups to develop a new series of
examples of perverse schobers, associated to the family $\Sym^{n+1}(\CC)$, $n \ge
0$, of symmetric products of the complex plane, stratified by the discriminant. 

\begin{definitionsintro} In this work, we give rigorous definitions for
\begin{itemize}[itemsep=-2pt]
    \item the concept of a perverse schober on $\Sym^{n+1}(\CC)$, called {\em $\AA_n$-schober}, in Definition~\ref{defi:anschober}
    \item the property of an $\AA_n$-schober to be \emph{framed}, in Definition~\ref{defi:highercotwist},
    \item the concept of a \emph{factorizing family} of $\AA_n$-schobers, $n\in \N$, in Definition~\ref{defi:multschober}.
\end{itemize}
\end{definitionsintro} 
As a first sanity check for our proposed definitions, we show that, from any
$\AA_n$-schober, we can construct a canonical categorical action of the braid
group $\Br_{n+1}$ on $n+1$ strands. This result generalizes (and conceptualizes)
the braid group actions known since the very origins of the theory: any
$\AA_n$-collection of spherical objects defines an $\AA_n$-schober and the
resulting braid group action recovers the one discovered by Seidel and Thomas.
However, from the schober perspective, $\AA_n$--collections of spherical objects
are a rather degenerate example of a perverse schober which, in terms of
vanishing cycle data, is zero on the singular locus of the discriminant. 

In \S
\ref{sec:soergelschober}, we introduce our main example of an $\AA_n$-schober
which exhausts the full capacity of the concept, in the sense that it has
nontrivial support on all strata of the discriminant stratification. %

\begin{theoremintro} Type $\AA$ singular Soergel bimodules define a factorizing family of framed $\AA_n$-schobers.
\end{theoremintro}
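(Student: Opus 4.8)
The plan is to make the three notions of Definitions~\ref{defi:anschober}, \ref{defi:highercotwist} and \ref{defi:multschober} explicit and then feed in the algebra of singular Soergel bimodules. The strata of the discriminant stratification of $\Sym^{n+1}(\CC)$ are indexed by the combinatorial data controlling parabolic subgroups $\symg_I \subseteq \symg_{n+1}$ of the symmetric group (equivalently, the ways of letting $n+1$ points collide in prescribed groups). To such a stratum I would assign the stable $\infty$-categorical (or pretriangulated dg) enhancement of an appropriate category of singular Soergel bimodules attached to $\symg_I$ --- roughly, bimodules over the invariant subring $R^{\symg_I}$ of the polynomial ring $R$ carrying the relevant reflection representation of $\symg_{n+1}$. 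To an elementary adjacency of strata --- governed by adding or deleting a single simple reflection, so $\symg_I \subseteq \symg_J$ differing by one reflection --- I would assign the induction and restriction functors along $R^{\symg_J} \hookrightarrow R^{\symg_I}$, together with their units and counits. This produces a diagram of stable $\infty$-categories of exactly the shape demanded by Definition~\ref{defi:anschober}, and the substance of the proof is to verify the defining conditions.

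The decisive structural input is that each inclusion $R^{\symg_J} \hookrightarrow R^{\symg_I}$ is a graded \emph{Frobenius extension} --- one of the load-bearing facts of Soergel's theory in Williamson's singular refinement --- so that induction and restriction are biadjoint up to the grading shift given by the defect of the extension. Hence the unit and counit of either adjunction have invertible cofibres, computed by the Demazure/divided-difference operator of the reflection being crossed; in the derived category these cofibres are grading shifts composed with wall-crossing equivalences, which yields the sphericality (invertibility of the twist and cotwist functors) required by the schober axioms. The remaining coherences --- the commuting squares and the Kapranov--Schechtman ``braid-like'' relations relating functors attached to different adjacent pairs of strata --- follow from strict associativity of induction and restriction together with the standard relations among the wall-crossing functors $B_s$ in singular Soergel calculus (Williamson, Elias--Williamson); where commutativity fails on the nose one rectifies using functoriality of the bar/dg enhancement. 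As a consistency check, the $\Br_{n+1}$-action produced from this schober by the general construction of the paper should be identified with the one generated by the Rickard complexes $\rick{s}$, which is precisely where the example makes contact with link homology.

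For the \emph{framed} property of Definition~\ref{defi:highercotwist} I would compute the relevant higher cotwist functors and check that they are honest grading shifts --- by the defects of the Frobenius extensions traversed --- rather than merely abstract autoequivalences; this is where the genuinely graded (and not just filtered) nature of Soergel bimodules is used, and it reduces to the behaviour of the Frobenius trace forms on the rings $R^{\symg_I}$ under iterated wall-crossings. For the \emph{factorizing family} structure of Definition~\ref{defi:multschober}, the geometric input is the local identification of $\Sym^{a}(\CC) \times \Sym^{b}(\CC)$ with a neighbourhood of the corresponding stratum in $\Sym^{a+b}(\CC)$, and its algebraic shadow is the monoidal equivalence between singular Soergel bimodules for a product of symmetric groups $\symg_a \times \symg_b$ and the external product of those for $\symg_a$ and $\symg_b$ --- a consequence of Williamson's classification, since the relevant invariant ring is itself a tensor product. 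One then checks that the induction/restriction functors and their units/counits respect these factorizations, which comes down to the fact that Frobenius structures multiply under tensor products of extensions.

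The main obstacle will be the homotopy-coherence bookkeeping. Singular Soergel calculus readily supplies the $1$- and $2$-categorical relations, but an $\AA_n$-schober is a diagram of stable $\infty$-categories, so one must either exhibit a strict model in which the required identities hold on the nose --- using honest bimodules and honest induction/restriction functors, whose associativity is strict, together with a dg enhancement --- or invoke a rectification theorem; carrying the shifts and defects coherently across all strata at once, and confirming that the globally assembled data satisfies the perversity-type conditions of Definition~\ref{defi:anschober} and not merely their local consequences, is the technical heart of the argument. A secondary, non-formal difficulty is the explicit computation of the cones of adjunction units in the derived category of singular Soergel bimodules, which genuinely relies on the structure theory --- Soergel's hom formula and Williamson's singular generalisation --- rather than being automatic.
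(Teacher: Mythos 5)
Your construction of the underlying data coincides with the paper's: the composition cube of invariant rings $R^{\symg_{n_1}\times\cdots\times\symg_{n_k}}$ with induction/restriction bimodules along the graded Frobenius extensions, the factorizing structure coming from tensor products of invariant rings, and the lift to stable $\infty$-categories; adjunctability, far-commutativity, recursiveness and monoidality are indeed the formal part of the argument. The gap lies in how you verify the substantive axioms. You claim that ``the unit and counit of either adjunction have invertible cofibres, computed by the Demazure operator'', i.e.\ you treat each edge of the cube as a spherical adjunction. This is false for thick edges (for $R^{\symg_a\times\symg_b}\supset R^{\symg_{a+b}}$ with $a,b\geq 2$ the counit fibre decategorifies to a non-invertible element of the Schur algebroid, and the unit cofibre is a direct sum of many shifted identity bimodules), and more importantly it is not what Definition~\ref{defi:anschober} asks: the twist $T_{ab}$ for a two-part composition is not the counit fibre of the single edge $(a{+}b)\to(a,b)$ but the Beck--Chevalley defect of the bifactorization cube $Q(ab,ba)$ of Definition~\ref{defi:bifactorizationcube}, a total fibre of a higher-dimensional cube passing through intermediate parabolics (cf.\ Definition~\ref{defi:highertwist} and Example~\ref{exa:sbimaa1}); likewise defect vanishing for $(ab,cd)$ with $a\neq d$ concerns such higher cubes and does not follow from the $2$-categorical relations of singular Soergel calculus that you invoke. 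Identifying $T_{ab}\simeq \qdeg^{ab}\,{}_{ba}\Rick_{ab}$ and proving $R_{ab,cd}\simeq 0$ is the actual content of the theorem (Propositions~\ref{prop:highertwistRickard} and~\ref{prop:defectzero}); in the paper this is done by induction through the categorified graded bialgebra relation (Theorem~\ref{thm:main}), whose proof needs the square-switch/Sto\v{s}i\'c isomorphisms (Proposition~\ref{prop:squareswitch}), the Koszul-complex functor, $l$- and $s$-filtrations and homological perturbation. In your proposal the Rickard complexes appear only as an a posteriori ``consistency check'' on the braid action, whereas they must be the engine of the verification.

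The same issue affects framedness: the cotwist of Definition~\ref{defi:highercotwist} is the total cofibre over the whole composition cube (the exploded complex), not the cofibre of a single adjunction unit, and for $R^{\symg_n}\subset R$ the single-unit cofibre is not invertible. That the total cofibre is the single shift $\qdeg^{n(n-1)}\id_n$ is Proposition~\ref{prop:expl}, proved by induction from the contractibility of the digon complex (Corollary~\ref{cor:digoncomplex}); your appeal to ``the behaviour of the Frobenius trace forms under iterated wall-crossings'' does not supply this computation.
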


We refer to these $\AA_n$-schobers as {\em Soergel schobers}. They are central
for this work as, on the one hand, the main definitions mentioned above are
modelled on these prototypical examples and, on the other hand, they shed light
on the conceptual foundations of the Rouquier braiding.

\subsection{Approach via classifying data for perverse sheaves}


Our modelling of the concept of $\AA_n$-schobers is based on the insights
offered by a series of generalizations of the results of \cite{gm:rebroussement}
due to Kapranov-Schechtman (cf. \cite{KS:hyperplane,MR4367797}), most
specifically, on the perspective provided in \cite{2102.13321}. In this latter
work, Kapranov-Schechtman provide a remarkable interpretation of the
classification data for perverse sheaves on $\Sym^{n+1}(\CC)$ in terms of graded
bialgebra relations in a suitably defined freely generated braided monoidal
category. We now recall this interpretation, as it is central to our
categorification approach.

\smallskip

A bialgebra in a braided monoidal category $\VS$ is an object $X$ together with
compatible (co)unital and (co)associative multiplication and comultiplication morphisms
$m\colon X\otimes X \to X$ and $\Delta\colon X \to X \otimes X$. The
compatibility condition requires $\Delta$ to intertwine the algebra structures on $X$ and $X\otimes X$, the latter of which
uses the braiding on $\VS$; in terms of string diagrams (read right to left):
\[
	\begin{tikzpicture}[scale=.4,smallnodes,anchorbase,rotate=270]
		\draw[very thick] (1,-1) to [out=150,in=330] (0,1) to (0,2) ; 
		\draw[line width=5pt,color=white] (0,-2) to (0,-1) to [out=30,in=210] (1,1);
		\draw[very thick] (0,-2)  to (0,-1) to [out=30,in=210] (1,1);
		\draw[very thick] (1,1) to (1,2) ;
		\draw[very thick] (1,-2) to (1,-1); 
		\draw[very thick] (1,-1) to [out=30,in=330]  (1,1); 
		\draw[very thick] (0,-1) to [out=150,in=210] (0,1); 
		\end{tikzpicture}
		=
\begin{tikzpicture}[scale=.4,smallnodes,anchorbase,rotate=270]
	\draw[very thick] (.5,.5) to [out=150,in=270] (0,1) to (0,2); 
	\draw[very thick] (0,-2)  to (0,-1) to [out=90,in=210] (.5,-.5) to (.5,.5);
	\draw[very thick] (.5,.5) to [out=30,in=270]  (1,1) to (1,2) ;
	\draw[very thick] (1,-2) to (1,-1) to [out=90,in=330] (.5,-.5); 
	\end{tikzpicture}
\]
For an $\N$-graded bialgebra in an additive braided category $\VS$,
	we additionally assume that there is a decomposition $X=\bigoplus_{n\geq 0}
	X_n$ with $X_0=\bone$ the monoidal unit, that the multiplication and
	comultiplication are homogeneous, and that the components involving $X_0$
	are identities. The compatibility of $m$ and $\Delta$ can then be phrased
	entirely in terms of homogeneous components

	\begin{equation}
        \label{eq:bialgebra}
    \sum_s
	\begin{tikzpicture}[scale=.4,smallnodes,anchorbase,rotate=270]
	\draw[very thick] (1,-1) to [out=150,in=330] (0,1) to (0,2) node[right=-2pt]{$b$}; 
	\draw[line width=5pt,color=white] (0,-2) to (0,-1) to [out=30,in=210] (1,1);
	\draw[very thick] (0,-2) node[left=-2pt]{$d$} to (0,-1) to [out=30,in=210] (1,1);
	\draw[very thick] (1,1) to (1,2) node[right=-2pt]{$a$};
	\draw[very thick] (1,-2) node[left=-2pt]{$c$} to (1,-1); 
	\draw[very thick] (1,-1) to [out=30,in=330]node[below=1pt]{$\phantom{t}$}  (1,1); 
	\draw[very thick] (0,-1) to [out=150,in=210]node[above=-1pt]{$s$} (0,1); 
	\end{tikzpicture}
	=
	\begin{tikzpicture}[scale=.4,smallnodes,anchorbase,rotate=270]
		\draw[very thick] (.5,.5) to [out=150,in=270] (0,1) to (0,2) node[right=-2pt]{$b$}; 
		\draw[very thick] (0,-2) node[left=-2pt]{$d$} to (0,-1) to [out=90,in=210] (.5,-.5) to (.5,.5);
		\draw[very thick] (.5,.5) to [out=30,in=270]  (1,1) to (1,2) node[right=-2pt]{$a$};
		\draw[very thick] (1,-2) node[left=-2pt]{$c$} to (1,-1) to [out=90,in=330] (.5,-.5); 
		\end{tikzpicture}
	\end{equation}
	where we must have $a+b=c+d$. By considering homogeneous components, graded
	bialgebras also make sense as families of objects $(X_n)_{n\geq 0}$ in a
	braided category $\VS$ that is merely required to be semiadditive, so that
	$\bigoplus_{n\geq 0} X_n$ need not exist as an object of $\VS$.
	Specifically, graded bialgebras in such $\VS$ are modelled as braided
	monoidal functors $\BS \to \VS$, with $\BS$ denoting the braided monoidal
	(semiadditive) category freely generated by the \emph{universal graded bialgebra} $(\ba_n)_{n\geq 0}$ with $\ba_0=\bone$. 

    Explicitly, $\BS$ has objects
    $\ba_{\ula}:=\ba_{\lambda_1}\otimes \cdots \otimes
    \ba_{\lambda_r}$ where $r\geq 0$ and
    $\ula=(\lambda_1,\dots,\lambda_r)\in \N^r$. The morphisms are
    generated by the components of the bialgebra (co)multiplication
    \[
    \mu^{p,q}_{p+q}\colon \ba_p\otimes \ba_q \to \ba_{p+q},\quad 	
    \Delta^{p+q}_{p,q}\colon \ba_{p+q} \to \ba_p\otimes \ba_q, \quad \mu^{p,0}_p=\mu^{0,p}_p=\Delta^p_{p,0}=\Delta^p_{0,p}=\id_{\ba_p}.
    \] 
    as well as the braid generators $\sigma_{p,q}\colon \ba_p\otimes \ba_q\to
    \ba_q\otimes \ba_p$, modulo the relations expressing the axioms of a braided
    (strict) monoidal category and a graded bialgebra.

    For $n\geq 0$ denote by $\BS_n$ the full subcategory of $\BS$ on objects
    $\ba_{\ula}$ of degree $n$, i.e. indexed by those
    $\ula=(\lambda_1,\dots,\lambda_r)$ for which $\sum_{i=1}^r
    \lambda_i=n$. 
    
    \begin{theoremintro}[{\cite[Theorem 1.3]{2102.13321}}]\label{thm:KS} For any abelian category $\AS$ (not necessarily monoidal) there is an equivalence of categories
        \[\mathrm{Perv}(\mathrm{Sym}^{n+1}(\CC),\AS)\simeq \mathrm{Fun(\BS_n,\AS)}\] where
        $\mathrm{Perv}(\mathrm{Sym}^{n+1}(\CC),\AS)$ denotes the category of $\AS$-valued
        perverse sheaves on $\mathrm{Sym}^{n+1}(\CC)$ which are constructible with respect to
    the discriminant stratification.
    \end{theoremintro}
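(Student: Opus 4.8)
This equivalence is due to Kapranov--Schechtman \cite{2102.13321}; I sketch how one proves it. The plan is to reduce the classification of $\AS$-valued perverse sheaves on $\Sym^{n+1}(\CC)$, constructible along the discriminant, to an explicit presentation by generators and relations, and then to recognise that presentation as the one defining $\BS_n$. For the reduction, identify $\Sym^{n+1}(\CC)$ with $\CC^{n+1}$ via the coefficient map, so that the discriminant is the image of the braid hyperplane arrangement $\{z_i=z_j\}$ under the reflection-group quotient $\pi\colon \CC^{n+1}\to\CC^{n+1}/\symg_{n+1}=\Sym^{n+1}(\CC)$. Since $\pi$ is a finite Galois quotient, $\mathrm{Perv}(\Sym^{n+1}(\CC),\AS)$ is recovered from $\symg_{n+1}$-equivariant $\AS$-valued perverse sheaves on $\CC^{n+1}$ constructible along the braid arrangement; by the classification of perverse sheaves on complexified real arrangements due to Kapranov--Schechtman (cf.\ \cite{gm:rebroussement,KS:hyperplane,MR4367797}), these are diagrams in $\AS$ indexed by the faces of the real braid arrangement, equipped with generization maps $\gamma$ and specialization maps $\delta$ between faces related by inclusion of closures, subject to (i) transitivity of iterated $\gamma$'s and of iterated $\delta$'s, (ii) commutation of $\gamma$'s and $\delta$'s supported on transverse parts of the arrangement, and (iii) an invertibility condition for the monodromy around each wall. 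Taking $\symg_{n+1}$-orbits of faces yields an equivalence $\mathrm{Perv}(\Sym^{n+1}(\CC),\AS)\simeq\Fun(\mathcal{Q}_n,\AS)$ for an explicit category $\mathcal{Q}_n$ with finitely many objects, presented by these generators and relations, the open stratum corresponding to the fully separated configuration and the deepest stratum to total coincidence.

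It remains to identify $\mathcal{Q}_n$ with $\BS_n$. One matches the objects of $\mathcal{Q}_n$ with the $\ba_{\ula}$: a codimension-one specialization merging two neighbouring clusters of sizes $p$ and $q$ corresponds to $\mu^{p,q}_{p+q}$, the opposite generization (splitting a cluster) to $\Delta^{p+q}_{p,q}$, and the elementary path interchanging two neighbouring clusters of sizes $p,q$ in the plane to the braiding $\sigma_{p,q}$; clusters of size $0$ are invisible, matching $\ba_0=\bone$ and $\mu^{p,0}_p=\Delta^p_{p,0}=\id$. Under this dictionary the transitivity relations (i) become the (co)associativity and (co)unit axioms, the relations (ii) together with the relations among such paths become the axioms of a braided strict monoidal category, and --- the crux --- the compatibility of specializations and generizations at the codimension-two stratum where one cluster splits while two clusters slide past each other becomes precisely the graded-bialgebra compatibility relation~\eqref{eq:bialgebra}. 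In this reformulation the monodromy-invertibility conditions (iii) are no longer needed as hypotheses: the relevant monodromies are honest composites of $\mu$, $\Delta$ and $\sigma$, and the bialgebra and braid relations exhibit explicit two-sided inverses for them. Thus $\mathcal{Q}_n$ is presented by exactly the generators and relations defining $\BS_n$, so $\mathcal{Q}_n\simeq\BS_n$ and the claimed equivalence follows; it is manifestly natural in $\AS$ along exact functors, so one may equally argue over $\AS=\Modk$ and transport, or work $\AS$-linearly throughout.

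The main obstacle is this last passage: turning the perversity (``genuine perverse sheaf'') condition at codimension-two strata --- an opaque compatibility together with several monodromy-invertibilities in the naive description --- into the single clean relation~\eqref{eq:bialgebra} with no invertibility imposed. I would establish it by induction on $n$, using a recollement along the decomposition of $\Sym^{n+1}(\CC)$ into the generic stratum and the discriminant: the generic stratum carries only local systems, i.e.\ representations of $\Br_{n+1}$ (whence the braidings $\sigma_{p,q}$), while perverse sheaves on the discriminant are governed by symmetric products of lower degree; the new relation is then pinned down by an explicit local computation in the first nontrivial case. The bookkeeping of this recursion runs parallel to the inductive build-up of the universal bialgebra by adjoining, in each new degree, a new object together with its (co)multiplications, and it is precisely the cancellation between generization-first and specialization-first composites along a codimension-two stratum that renders the surplus monodromy data in (iii) redundant.
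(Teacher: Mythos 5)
First, note that the paper does not prove this statement at all: Theorem~\ref{thm:KS} is imported verbatim from \cite{2102.13321}, so there is no internal proof to compare with, and what you have written is an attempted reconstruction of Kapranov--Schechtman's argument. Your dictionary in the second paragraph is consistent with the cited result: compositions of clusters correspond to the objects $\ba_{\ula}$, merging/splitting of adjacent clusters to $\mu$ and $\Delta$, the half-monodromy interchanging adjacent clusters to $\sigma_{p,q}$, and the codimension-two compatibility to \eqref{eq:bialgebra}. Your remark about the monodromy conditions is also the right mechanism, though stated backwards: the invertibility is not ``rendered redundant'' by the relations, it is packaged into $\BS_n$ by declaring $\sigma$ invertible there --- e.g.\ the $(1,1)$-component of \eqref{eq:bialgebra} reads $\Delta\circ\mu=\id+\sigma_{1,1}$, so a functor out of $\BS_n$ automatically produces data with $\Delta\mu-\id$ invertible, recovering the classical monodromy condition for $\mathrm{Perv}(\CC,\{0\})$.

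The genuine gap is the very first reduction. It is not true that $\mathrm{Perv}(\Sym^{n+1}(\CC),\AS)$ is ``recovered from'' $\symg_{n+1}$-equivariant perverse sheaves on $\CC^{n+1}$ constructible along the braid arrangement in the way you then use it: for a finite quotient $\pi\colon X\to X/G$ the coarse quotient sees strictly less than the equivariant category (already $\mathrm{Perv}_G(\mathrm{pt})=\mathrm{Rep}(G)$ versus $\mathrm{Perv}(\mathrm{pt})$; for $\Sym^2(\CC)=\CC^2/\symg_2$ the sign-isotypic skyscraper at the fixed locus upstairs corresponds to nothing downstairs). At best $\pi^*$ embeds $\mathrm{Perv}(\Sym^{n+1}(\CC))$ as a full subcategory of the equivariant category, and characterizing its essential image in terms of the face data $(E_C,\gamma,\delta)$ is precisely where the real work lies; ``taking $\symg_{n+1}$-orbits of faces'' does not perform this step, and the category $\mathcal{Q}_n$ you obtain that way is not yet the classifying category. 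This is why Kapranov--Schechtman do not argue by equivariant descent from the arrangement but work directly on $\Sym^{n+1}(\CC)$, using its own discriminant stratification and real skeleton, whose cells are indexed by compositions, to produce the $\mu,\Delta,\sigma$ data intrinsically. Finally, your closing paragraph concedes that the codimension-two identification --- the actual crux turning the perversity conditions into \eqref{eq:bialgebra} --- is only an inductive plan via recollement, so even granting the descent step the argument remains a sketch rather than a proof.
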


    \begin{remarkintro}
Important for our approach to $\AA_n$-schobers as categorification of perverse
sheaves on $\mathrm{Sym}^{n+1}(\CC)$ are two suggestions from Theorem~\ref{thm:KS}.
First, it may be easier to define a consistent family perverse schobers on all
symmetric products $\Sym^{n+1}(\CC)$ for $n\in \N$ than any individual one. Second,
such a family should resemble a categorified graded bialgebra in a braided
monoidal $2$-category.
    \end{remarkintro}

\subsection{A categorified graded bialgebra from singular Soergel bimodules}

The celebrated, motivating example of a perverse schober is captured by the data
of a spherical functor and type $\AA$-configurations of such functors give rise
to categorical braid group actions as mentioned above. An important example in
link homology theory is given by the \emph{Rouquier complexes} of \emph{Soergel
bimodules} for $S_n$ \cite{0409593}, which yield categorical actions of the
$n$-strand Artin braid group $\Br_n$. It is an intriguing question whether
Rouquier complexes are part of the structure of a perverse schober on
$\mathrm{Sym}^n(\CC)$. Indeed, \cite[Theorem 1.3]{2102.13321} as well as the
discussion in \cite{MR4367797} suggest that Rouquier complexes of Soergel
bimodules may not be enough to directly express the structure of a perverse schober, and
that parabolic induction and restriction should play an important role. This is
where \emph{singular} Soergel bimodules and \emph{Rickard complexes} come into play. In the following we recall these concepts.

Soergel bimodules \cite{Soergel} of type $S_n$ categorify the Hecke algebra of $S_n$, a
quotient of the group algebra of the braid group $\Br_n$.
\[
H_n:=\Z[q,q^{-1}][\Br_n]/\langle \sigma_i-\sigma_i^{-1}=(q-q^{-1})e	\rangle
\]
For every braid, there exists a corresponding Rouquier complex \cite{0409593} of
Soergel bimodules, whose class in K-theory is the Hecke algebra element
represented by the braid. Rouquier complexes respect the braid relations up to
canonical homotopy equivalence and are compatible with parabolic induction. 

The main result of \cite{LMGRSW} shows that Rouquier complexes of Soergel
bimodules for all $S_n$, $n\geq 0$ give rise to a braided monoidal
$(\infty,2)$-category. The braiding in this higher category is the source of
most representation-theoretic link homology theories, including Khovanov
homology \cite{Kho}, the Khovanov--Rozansky link homologies for $\glN$ \cite{KR}, and triply-graded
link homology \cite{MR2421131, MR2339573}, see \cite{arxiv.2207.05139} for a survey. After truncation to
homotopy categories, the braiding furthermore serves as input datum for a family
of topological field theories \cite{2019arXiv190712194M}, which are sensitive to
smooth structure in dimension four \cite{ren2024khovanov}. 

Singular Soergel bimodules \cite{Williamson-thesis} categorify partial
idempotent completions of Hecke algebras known as Schur algebroids. The
idempotents adjoined correspond to (anti-)symmetrizations with respect to
parabolic subgroups and are realized by (rescalings of) Kazdhan--Lusztig basis
elements corresponding to their longest elements. Rickard complexes
\cite{MR2373155} are the generalization of Rouquier complexes to the setting of
singular Soergel bimodules. In the context of link homology theories, they are
used to construct categorifications of \emph{colored} link polynomials
\cite{MR2746676, MR3687104, Wed3} i.e. with higher exterior/symmetric powers of
natural representations labelling the link components. 

We expect that Rickard complexes of singular Soergel bimodules will give rise to
a braided monoidal $(\infty,2)$-category generalizing that of \cite{LMGRSW}. As
key step towards the construction of the Soergel schobers, we exhibit a
categorified graded bialgebra structure with respect to the Rickard braiding.

\begin{theoremintro}[Categorified bialgebra structure, Theorem~\ref{thm:main}]
	Let $a,b,c,d$ be non-negative integers, $a+b=c+d$. Then
	there exists a homotopy equivalence of twisted complexes of singular Soergel
	bimodules
	\begin{equation*}
	\tw_{D}\left(\bigoplus_{s}
	\qdeg^{-s(s+a-d)}
	\left\llbracket
	\begin{tikzpicture}[scale=.4,smallnodes,anchorbase,rotate=270]
	\draw[very thick] (1,-1) to [out=150,in=330] (0,1) to (0,2) node[right=-2pt]{$b$}; 
	\draw[line width=5pt,color=white] (0,-2) to (0,-1) to [out=30,in=210] (1,1);
	\draw[very thick] (0,-2) node[left=-2pt]{$d$} to (0,-1) to [out=30,in=210] (1,1);
	\draw[very thick] (1,1) to (1,2) node[right=-2pt]{$a$};
	\draw[very thick] (1,-2) node[left=-2pt]{$c$} to (1,-1); 
	\draw[very thick] (1,-1) to [out=30,in=330]  (1,1); 
	\draw[very thick] (0,-1) to [out=150,in=210]node[above=-1pt]{$s$} (0,1); 
	\end{tikzpicture}
	\right\rrbracket
	\right) 
	\simeq 
	\left\llbracket
	\begin{tikzpicture}[scale=.4,smallnodes,anchorbase,rotate=270]
		\draw[very thick] (.5,.5) to [out=150,in=270] (0,1) to (0,2) node[right=-2pt]{$b$}; 
		\draw[very thick] (0,-2) node[left=-2pt]{$d$} to (0,-1) to [out=90,in=210] (.5,-.5) to (.5,.5);
		\draw[very thick] (.5,.5) to [out=30,in=270]  (1,1) to (1,2) node[right=-2pt]{$a$};
		\draw[very thick] (1,-2) node[left=-2pt]{$c$} to (1,-1) to [out=90,in=330] (.5,-.5); 
		\end{tikzpicture}
		\right\rrbracket
	\end{equation*}
	where the twist $D$ is strictly decreasing in $s$. 
\end{theoremintro}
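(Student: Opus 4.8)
The plan is to convert the claimed equivalence into an explicit identity of twisted complexes of honest singular Soergel bimodules built only from \emph{merge} and \emph{split} bimodules---that is, from parabolic induction and restriction---and then to establish that identity by iterated Gaussian elimination. Two external facts should be kept in view throughout. First, the single bimodule on the right-hand side, namely induction of $\ba_c\boxtimes\ba_d$ up through $\ba_{a+b}$ and restriction back down to $\ba_b\boxtimes\ba_a$, carries a Mackey (double-coset) filtration whose subquotients are the ``fully refined'' bimodules $M_s$ indexed by the $2\times 2$ contingency tables with row sums $(b,a)$ and column sums $(c,d)$; these tables are parametrized by a single integer $s$, and the grading shift on the $s$-th subquotient is exactly $\qdeg^{-s(s+a-d)}$. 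This pins down both the index set for $s$ and the prefactor on the left-hand side, and recasts the desired statement as: this bimodule filtration is resolved by inserting a Rickard braiding into each subquotient and gluing the results by a twist that is triangular in $s$.

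First I would unfold all Rickard complexes. The right-hand web involves only a merge and a split, so $\llbracket\cdot\rrbracket$ is the single bimodule $M$, placed in homological degree $0$. In the $s$-th left-hand summand $\llbracket\mathrm{web}_s\rrbracket$, the lone crossing is the Rickard complex $\sigma$ of the relevant pair of thick strands; I replace it by its standard minimal complex, whose terms are indexed by the thickness $k$ of the strand pulled across and are composites of one split and one merge, recording the grading shifts and internal differential. Composing on the outside with $\Delta\boxtimes\Delta$ and $\mu\boxtimes\mu$ and flattening iterated merges and iterated splits via the associativity relations (and, later, the Frobenius relations) for singular Soergel bimodules, the $s$-th summand becomes an explicit finite complex $C_s$ of elementary refinement bimodules, and the left-hand side as a whole becomes $\tw_D\!\big(\bigoplus_s \qdeg^{-s(s+a-d)}C_s\big)$.

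The heart of the argument is the resulting cancellation. Using the categorified Pascal rule---the decomposition of a split followed by a merge through an intermediate thick strand into a direct sum of refinement bimodules with quantum-binomial multiplicities---together with the Frobenius relations, I would match, for each $s$ below the extreme value, a distinguished chain object of $C_s$ (the one coming from the $k=0$, ``no crossing'' end of the minimal Rickard complex) with a grading shift of a chain object occurring at strictly smaller $s$, via an \emph{isomorphism} of elementary bimodules; these isomorphisms are the relevant components of the twist $D$, and they only ever point from larger $s$ to smaller $s$, which is the source of the strict monotonicity clause. Gaussian-eliminating all of these acyclic pairs, starting at the extreme value of $s$ and working inward, collapses the twisted complex onto a single surviving chain object, which the same bookkeeping identifies with $M$; this is the asserted homotopy equivalence. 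A consistency check on shifts and signs comes from applying the Grothendieck-group functor, where the identity reduces to the known compatibility of comultiplication with multiplication---equivalently the $\mathcal R$-matrix relation---in the Schur algebroid.

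I expect the main obstacle to be exactly this cancellation bookkeeping: organizing the combinatorics of the contingency-table index $s$ against the homological/thickness index $k$ of each minimal Rickard complex so that the matched pairs are genuinely unit-multiplicity isomorphisms of elementary bimodules, that no object is matched twice, and that the matching is triangular in $s$; this requires careful control of the quantum-binomial multiplicities and of all grading shifts, and it is here that the precise prefactor $\qdeg^{-s(s+a-d)}$ and the ``strictly decreasing'' clause get nailed down. A secondary, more structural point is to license the manipulation ``resolve the braiding, then compose and flatten'': one must know that composition of these twisted complexes and the associativity and interchange coherences among merges, splits, and braidings hold strictly enough---or can be coherently strictified---that no spurious higher homotopies accumulate, which is the sort of bookkeeping that the $(\infty,2)$-categorical framework developed earlier is meant to handle.
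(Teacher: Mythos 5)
Your proposal gets the statement-level numerology right (the index set for $s$, the shift $\qdeg^{-s(s+a-d)}$, the triangularity of $D$), but the step that carries the entire mathematical weight is asserted rather than constructed, and it is precisely the step the paper's proof is engineered to handle. You propose to unfold each crossing into its minimal Rickard complex, decompose via square switch (Proposition~\ref{prop:squareswitch}), declare the components of $D$ to be certain matching isomorphisms between chain objects at different values of $s$, and Gaussian-eliminate. Two things are missing. First, a twist $D$ is not something you may simply declare componentwise: you must exhibit a total differential that squares to zero, which in practice requires higher correction terms compatible with the internal differentials of the summands; nothing in your plan produces these or shows they exist. Second, verifying that the matched components of the differential really are unit-multiplicity isomorphisms after the square-switch decompositions requires explicit control of the maps $\chi^+_m$ in the decomposed basis (foam-level computations with quantum-binomial multiplicities), and you flag this as ``the main obstacle'' without supplying a mechanism. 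Your appeal to a Mackey/double-coset filtration of the right-hand bimodule is only a decategorified heuristic: the standard filtration of the induction--restriction bimodule has standard bimodules as subquotients, not the split--cross--merge composites appearing on the left-hand side, so it can fix the bookkeeping but cannot serve as a proof step.

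The paper resolves exactly this by a different device (following \cite{HRW1}): apply the Koszul dg functor $K$ to the Rickard complex to form ${}_{cd}\KCmin_{ab}=K({}_{cd}\Rick_{ab})$, change basis in the exterior variables to the $\zeta^{(k)}_j$ (Lemma~\ref{lemma:zeta d2}, Proposition~\ref{prop:diffKMCSexplicit}), and extract the $l$-filtration of Proposition~\ref{prop:KCdiffs}. The $l=0$ part is a subcomplex that on the one hand retracts onto the single bimodule $W_b$ (Corollary~\ref{cor:filt}, giving the right-hand side), and on the other hand carries an $s$-filtration whose subquotients are identified with the shifted terms $\I^{(s)}({}_{c(d-s)}\MCS_{a(b-s)})$ of the left-hand side (Lemma~\ref{lemma:I of Wk}, Proposition~\ref{prop:IMCS}). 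Homological perturbation applied to this $s$-filtered complex then \emph{produces} the twist $D$ and its strict decrease in $s$, rather than requiring you to guess $D$ and check coherence by hand. If you want to salvage your route, you would need either to import this Koszul/filtration machinery or to replace it by an equally systematic cancellation scheme with explicit control of the differentials; as written, the proposal is a plausible strategy outline with the decisive construction left open.
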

\noindent For definitions and conventions for singular Soergel bimodules see
	Section~\ref{sec:soergelschober}. There we also give a direct proof of the
	categorified bialgebra structure using technology developed in
	\cite{HRW1,HRW2} to describe generalized skein relations for Rickard
	complexes of singular Soergel bimodules. 

    \begin{remarkintro}
By decategorification, we obtain as a corollary that the type $\AA$ Schur
        algebroids constitute a braided monoidal category generated by a graded
        bialgebra object, and thus recover the $q$-Schur category of \cite[Theorem
        1]{brundan2024qschurcategorypolynomialtilting}. Further quotients
        describe categories of polynomial tilting modules for type $\AA$ quantum
        groups \cite{CKM,TVW,latifi2021minimalpresentationsglnwebcategories} and, via \cite[Theorem 1.3]{2102.13321}, determine
        representation category-value families of perverse sheaves on
        $\mathrm{Sym}^n(\CC)$ for all $n\geq 0$. 
    \end{remarkintro}

\subsection{The higher algebra of parabolic induction}

The essential common feature of the graded bialgebras of \cite[Theorem
1.3]{2102.13321} and singular Soergel bimodules are \emph{cubes of
compositions}. 

A \emph{composition} (or \emph{unordered partition}) of a natural number $n\geq
0$ is an ordered tuple\footnote{We allow the empty tuple as unique composition of $0$.} of positive natural numbers $(n_1, n_2, \ldots, n_k)$
summing up to $n$. The set $\Comp(n)$ of all compositions of $n$ is partially
ordered by refinement $(n)<\cdots< (1,\dots, 1)$ and the operation of
concatenating compositions endows the disjoint union $\Comp := \amalg_{n \in
\NN} \Comp(n)$ of all composition posets with a strict monoidal structure. As a
monoidal category $\Comp$ is generated by elementary refinements $(a+b)<(a,b)$,
while the opposite poset $\Comp^\op$ is generated by the dual coarsenings:
\[\begin{tikzpicture}[scale =.75, smallnodes,rotate=270,anchorbase]
    \draw[very thick] (0.25,0) node[left,xshift=2pt]{$b$} to [out=90,in=210] (.5,.75);
    \draw[very thick] (.75,0) node[left,xshift=2pt]{$a$} to [out=90,in=330] (.5,.75);
    \draw[very thick] (.5,.75) to (.5,1.5) node[right,xshift=-2pt]{$a{+}b$};
\end{tikzpicture}
\qquad, \qquad 
\begin{tikzpicture}[scale =.75, smallnodes, rotate=90,anchorbase]
    \draw[very thick] (0.25,0) node[right,xshift=-2pt]{$a$} to [out=90,in=210] (.5,.75);
    \draw[very thick] (.75,0) node[right,xshift=-2pt]{$b$} to [out=90,in=330] (.5,.75);
    \draw[very thick] (.5,.75) to (.5,1.5) node[left,xshift=2pt]{$a{+}b$};
\end{tikzpicture}
\] In the context of graded bialgebras, these elementary refinements and coarsenings
correspond to components $\Delta^{a+b}_{ab}$ and $\mu^{ab}_{a+b}$ of the
(co)multiplication, while for singular Soergel bimodules, they represent
induction and restriction bimodules between partially symmetric polynomial rings
of geometric origin, namely equivariant cohomology rings of partial flag
varieties. 

On a combinatorial level, the composition poset $\Comp(n)$ can be identified
with the hypercube $[0,1]^{n-1}$, in which vertices correspond to compositions
and edges to elementary refinements. A key observation is that for Soergel
bimodules the restriction bimodules associated to elementary coarsenings are
automatically incorporated, namely as right adjoints of the induction bimodules
for elementary refinements. 

One is, thus, led to a typical problem of higher representation theory, the
study of the (monoidal) 2-categorical structure generated by the units and
counits for the adjoint pairs of induction and restriction bimodules associated
to the edges of the composition cube. In our prototypical case, this is, by
definition, the (monoidal) 2-category of singular Bott--Samelson bimodules
\cite{Williamson-thesis,ESW,stroppel2024braidingtypesoergelbimodules}, which can
be described using the diagrammatic language of foams \cite{HRW1} as we recall
in Remark~\ref{rem:graphicallanguage}, or via Schur quotients of type A
categorified quantum groups \cite{KL3, KLMS} by means of categorified skew Howe
duality, as explained in \cite{QR}.

\subsection{On the schober definition}

Our notion of $\AA_n$-schober is based on an abstraction from the
prototypical example of singular Soergel bimodules. The key insight is that the
graded bialgebra relations \eqref{eq:bialgebra} admit a very natural stable
categorification under fairly general assumptions. Underlying an $\AA_n$-schober
we, thus, only need the following.
\begin{itemize}[leftmargin=8mm]
\item[\textbf{Data:}] A coherent diagram $\X: \Comp(n+1) \to \Stab$ of exact
functors of stable $\infty$-categories.
\end{itemize}
These functors are required to satisfy five types of properties listed in
Definition~\ref{defi:anschober}: 
\begin{itemize}[leftmargin=18mm]
    \item[\textbf{Properties:}] Adjunctability, Recursiveness, Far-commutativity, Twist invertibility, Defect vanishing.
\end{itemize}
Adjunctability requries that all functors in $\X$ admit right adjoints. The
remaining properties impose certain higher categorical constraints among the
various unit and counit transformations witnessing the adjunctions. These are
satisfied in the case of Soergel bimodules as well as in various ad-hoc
definitions of perverse schobers for small Coxeter types, that we discuss in
\S\ref{sec:smallrank}. Imposing an additional requirement on cotwists leads to the concept that we term \emph{framed} schobers, see Definition~\ref{defi:highercotwist}:

\begin{itemize}[leftmargin=72mm]
    \item[\textbf{Additional property of a framed schober:}] Cotwist
    invertibility.
\end{itemize}

In particular, we discuss in detail the following cases in \S\ref{sec:smallrank}:
\begin{itemize}[itemsep=-2pt]
\item An $\AA_1$-schober is simply an exact functor $F: \A \to \B$ of stable
$\infty$-categories, admitting a right adjoint $F^*$, such that the \emph{twist}, the
fiber of the counit $F F^* \to \id_{\B}$, defines an autoequivalence of $\B$.
\item For a framed $\AA_1$-schober, the additional requirement is that \emph{cotwist}, the
cofiber of the counit $\id_{\B} \to F^* F$, defines an autoequivalence of
$\A$. This is nothing but a spherical functor \cite{AL17}.
\item The concept of an $\AA_2$-schober was the starting point of our approach,
which we reached by an ad-hoc categorification of the classical diagrammatic
classification result by Granger-Maisonobe \cite{gm:rebroussement} for perverse
sheaves associated to cuspidal cubics. 
\item The concept of a framed $\AA_2$-schober
has been discovered independently in \cite{AL:skein}, where they are termed
skein--triangulated categorical representations of generalized braid groups (in
the context of ``enhanced triangulated categories''). 
\end{itemize}

At first sight, the data and relations of $\AA_n$-schobers already look
challenging to keep track of in small ranks, both from a technical and
conceptual perspective. However, as it turns out, all needed constraints can be
organized and formulated in terms of so--called higher--dimensional
Beck--Chevalley cubes, which we introduce in \S \ref{subsec:higherbc} and heavily use for the definitions in \S\ref{sec:anschobers}.
\smallskip

The remainder of \S\ref{sec:anschobers} and \S\ref{sec:monoidal} is devoted to a
categorification of two important aspects of \cite[Theorem 1.3]{2102.13321} that
we have previously highlighted. 

On the one hand, it is natural to consider \emph{factorizing families} of
$\AA_n$-schobers for all $n\geq 0$, modelled as certain monoidal functors in
Definition~\ref{defi:multschober}. Monoidality incorporates the Recursiveness
and Far-commutativity properties for all $n$, leading to a simplified
definition.

On the other hand, in Theorem~\ref{thm:schoberbialgebra} we show that any
$\AA_n$-schober furnishes a categorification of the graded bialgebra relations
\eqref{eq:bialgebra} homogeneous of degree $n+1$. This yields a concrete
recursive description of the higher twists in terms of lower order twists, which
we use in \S\ref{sec:Soergelschoberbialgebra} to identify the higher twists for
singular Soergel bimodules with Rickard complexes. In particular, this
illustrates in the example of singular Soergel bimodules, how \emph{the
categorified graded bialgebra generates the braiding}, and sheds light on the
higher-categorical foundations of Rickard complexes. 

As a sanity check in the general case, we show in Corollary~\ref{cor:anbraid}
    that any $\AA_n$-schober furnishes a canonical $\Br_{n+1}$-action up to
    homotopy on the underlying stable $\infty$-category associated to the open
    stratum. A lift to a coherent action is left as
    Conjecture~\ref{conj:anbraidcoherent}.
\smallskip

We speculate that factorizing families of framed $\AA_n$-schobers are the
appropriate categorified analog of graded bialgebras in a stable
categorification of \cite[Theorem 1.3]{2102.13321}. There should be a locally
stable $\EE_2$-monoidal $(\infty,2)$-category that is freely generated by a
factorizing family of framed $\AA_n$-schobers (a ``colored 2PROB''), which then serves as
corepresenting object of framed $\AA_n$-schobers valued in any locally stable
$\EE_2$-monoidal $(\infty,2)$-category. 
\smallskip

Finally we remark that, although our work here only concerns the Coxeter types
$\AA$, many essential aspects of our approach generalize to other finite Coxeter
types, where compositions cubes are replaced by cubes of parabolic subgroup
inclusions, with monoidal structure implemented by parabolic induction.

\subsection*{Acknowledgements}
We would like to thank Matthew Hogancamp, Mikhail Kapranov, Timothy Logvinenko, David Reutter, David Rose, Raphael
Rouquier and Catharina Stroppel for helpful discussions.

\subsection*{Funding}
The authors acknowledge support from the Deutsche
Forschungsgemeinschaft (DFG, German Research Foundation) under Germany's
Excellence Strategy - EXC 2121 ``Quantum Universe'' - 390833306 and the
Collaborative Research Center - SFB 1624 ``Higher structures, moduli spaces and
integrability'' - 506632645.

\section{Schobers of type \texorpdfstring{$\AA_1$}{A1}, \texorpdfstring{$\AA_1 \times \AA_1$}{A1 x A1} and \texorpdfstring{$\AA_2$}{A2}}
\label{sec:smallrank}

We discuss some perverse schober conditions in low dimensions, explaining how
they decategorify to known classification data for perverse sheaves. We will
freely use the language of $\infty$-categories, in particular, stable
$\infty$-categories using \cite{lurie:htt} and \cite{lurie:ha} as standard
references. 

\subsection{\texorpdfstring{$\AA_1$}{A1}-schobers}

\begin{defi}
    \label{defi:a1schober}
    An exact functor
    \[
        F: \A \to \B
    \]
    of stable $\infty$-categories is called an {\em $\AA_1$-schober} if 
    \begin{enumerate}[label=(A1.\arabic *)]
        \item $F$ admits a right adjoint, denoted by $F^*$, 
        \item the fiber $T$ of the counit 
            \[
                F F^* \to \id_{\B}
            \]
            defines an autoequivalence of $\B$. 
    \end{enumerate}
    We refer to the autoequivalence $T$ as the {\em twist functor} associated to $F$.
\end{defi}

Given an $\AA_1$-schober, we thus obtain an exact sequence
\[
\begin{tikzcd}
    T \ar{r}\ar{d} & F F^* \ar{d}\\
    0 \ar{r} & \id
\end{tikzcd}
\]
in the stable $\infty$-category $\Fun(\B,\B)$ exhibiting $FF^*$ as an extension
of $\id$ and $T$. This extension splits upon passage to Grothendieck groups, so
that we obtain abelian groups $A = K_0(\A)$ and $B = K_0(\B)$ along with
maps $f,f^*,t$ satisfying $f f^* = \id + t$. Such a datum is classically known as
to classify a perverse sheaf on $\CC$ with respect to the stratification $\CC =
\{0\} \cup \CC^*$. 

\begin{rem}
    \label{rem:spherical}
    Note that our notion of an $\AA_1$-schober is weaker than the concept of a
    spherical functor, since we do not require the fiber of the {\em unit} of
    the adjunction $F \dashv F^*$, the so-called {\em cotwist}, to be an
    equivalence. On the one hand, even this weaker structure along with the
    generalizations introduced in this work, will yield perverse sheaves when
    passing to Grothendieck groups. On the other hand, as we will see later, it
    also makes sense from a structural perspective to interpret the collection
    of invertible higher {\em cotwists} as extra data that may or may not be
    available (cf. \S \ref{subsec:framed}). 
\end{rem}

\subsection{\texorpdfstring{$\AA_1 \times \AA_1$}{A1 x A1}-schobers}

Let 
\begin{equation}
    \label{eq:commdiag}
    \begin{tikzcd}
        \A \ar{r}{I}\ar[swap]{d}{H} & \B \ar{d}{F}\\
        \C \ar{r}{G} & \D
    \end{tikzcd}
\end{equation}
be a coherent square of exact functors of stable
$\infty$-categories such that all functors admit right adjoints, denoted $F^*,
G^*, H^*, I^*$, respectively. We define the {\em Beck--Chevalley map}
\[
    H I^* \to G^* F
\]
to be the natural transformation given as the composite of 
\[
    H I^* \to H I^* F^* F \simeq H  (FI)^* F \simeq H  (GH)^* F  \simeq H H^* G^* F \to G^* F
\]
where the first map is the whiskering of the unit of $F \dashv F^*$
with $H I^*$, the last map is the whiskering of the counit of $H
\dashv H^*$ with $G^* F$, and the intermediate maps are rather
apparent canonical identifications. 

\begin{defi}
    \label{defi:a1a1schober}
    A coherent square as in \eqref{eq:commdiag} is called an {\em
    $\AA_1 \times \AA_1$-schober} if the following conditions hold:
    \begin{enumerate}[label = (A1xA1.\arabic *)]
        \item All functors in \eqref{eq:commdiag} admit right adjoints. 
        \item The functors $F$ and $G$ are $\AA_1$-schobers. 
        \item The Beck--Chevalley map
            \[
                H I^* \to G^* F
            \]
            is an equivalence. 
        \item The Beck--Chevalley map
            \[
                I H^* \to F^* G
            \]
            associated to the transpose of the square \eqref{eq:commdiag} is an equivalence. 
    \end{enumerate}
\end{defi}

\begin{rem}
    \label{rem:twistscommute}
    Given an $\AA_1 \times \AA_1$-schober, denote by $T$ and $S$ the twist
    functors associated to the functors $F$ and $G$, respectively. The
    composite $TS$ is equivalent to the total fiber of the square
    \[
    \begin{tikzcd}
        FF^* GG^* \ar{r}\ar{d} & G G^* \ar{d}\\
        FF^* \ar{r} & \id_{\D}
    \end{tikzcd}
    \]
    which using the Beck--Chevalley map $IH^* \overset{\simeq}{\to} F^* G$ may be identified with the square
    \[
    \begin{tikzcd}
        FI (GH)^* \ar{r}\ar{d} & G G^* \ar{d}\\
        FF^* \ar{r} & \id_{\D}.
    \end{tikzcd}
    \]
    Similarly, the composite $ST$ is given as the total fiber of the square
    \[
    \begin{tikzcd}
        GG^* FF^* \ar{r}\ar{d} & G G^* \ar{d}\\
        FF^* \ar{r} & \id_{\D}
    \end{tikzcd}
    \]
    which may be identified via the Beck--Chevalley map 
    \[
        H I^* \to G^* F
    \]
    with 
    \[
    \begin{tikzcd}
        GH (FI)^* \ar{r}\ar{d} & G G^* \ar{d}\\
        FF^* \ar{r} & \id_{\D}.
    \end{tikzcd}
    \]
    Finally the identification $FI \simeq GH$ yields a canonical equivalence
    \[
        TS \simeq ST
    \]
    so that the twist functors commute coherently. Examples of $\AA^1 \times
    \AA^1$-schobers and higher--dimensional cubical variants, have already been
    studied in \cite{cdw:complexes}.
\end{rem}

\begin{rem}
    \label{rem:decategorificatoin}
    Upon passage to Grothendieck groups, an $\AA_1 \times \AA_1$-schober gives rise to
    abelian groups $A,B,C,D$ along with additive maps 
    \[
        \begin{tikzcd}[sep=8ex]
        A \ar[bend left=10]{r}{i} \ar[bend left=10]{d}{h} & \ar[bend left=10]{l}{i^*}\ar[bend left=10]{d}{f}  B \\
        C \ar[bend left=10]{u}{h^*} \ar[bend left=10]{r}{g}  & \ar[bend left=10]{u}{f^*} \ar[bend left=10]{l}{g^*}  D
    \end{tikzcd}
    \]
    satisfying the relations
    \begin{enumerate}
        \item $fi = gh$ and $i^*f^* = h^* g^*$,
        \item $t := ff^* - \id$ is an automorphism, 
        \item $s := gg^* - \id$ is an automorphism, 
        \item $hi^* = g^* f$,
        \item $ih^* = f^* g$.
    \end{enumerate}
    Such a datum is known \cite{GMP85} to classify a perverse sheaf
    on $\CC^2$ with respect to the stratification 
    \[
        \CC^2 = \{(0,0)\} \cup H \setminus \{0\} \cup \CC^2 \setminus H
    \]
    determined by the singular hypersurface
    \[
        H = \{ (z_1,z_2) \in \CC^2 \; | \; z_1 z_2 = 0 \} \subset \CC^2.
    \]
\end{rem}

\subsection{\texorpdfstring{$\AA_2$}{A2}-schobers}
\label{subsec:a2schober}

Variants of the conditions entering the following definition (in the context of
``enhanced triangulated categories'') were introduced by Anno-Logvinenko
\cite{AL:skein} when defining skein-triangulated categorical representations of
generalized braid groups. While the definition of loc. cit. is motivated by the
analysis of specific examples of such categorical representations (Nil-Hecke
algebras and cotangent bundles of flag varieties), our definition was found
independently and arises from the desire to categorify the classification of
perverse sheaves on the cuspidal cubic from \cite{gm:rebroussement}, as
explained below. 

\begin{defi}
    \label{defi:a2schober}
    A coherent square as in \eqref{eq:commdiag} is called an {\em
    $\AA_2$-schober} if the following conditions hold:
    \begin{enumerate}[label=(A2.\arabic *)]
        \item All functors in \eqref{eq:commdiag} admit right adjoints. 
        \item The functors $F$ and $G$ define $\AA_1$-schobers with twist functors denoted $T$ and $S$, respectively.
        \item The fiber $\alpha$ of the Beck-Chevalley map 
            \begin{equation}
                \label{eq:bcalpha}
                H I^* \to G^* F
            \end{equation}
            is an equivalence $\alpha: \B \to \C$. 
        \item The fiber $\beta$ of the Beck-Chevalley map 
            \begin{equation}
                \label{eq:bcbeta}
                I H^* \to F^* G
            \end{equation}
            of the transpose square is an equivalence $\beta: \C \to \B$. 
        \item\label{a2:5} The {\em Beck--Chevalley square} 
            \begin{equation}
                \label{eq:bcsquare1}
                \begin{tikzcd}
                    II^* \ar{r} \ar{d} & F^* G G^* F \ar{d} \\
                    \id_{\B} \ar{r} & F^* F 
                \end{tikzcd}
            \end{equation}
            is a biCartesian square of endofunctors of $\B$. Here, the maps are the following:
            \begin{enumerate}
                \item left vertical: the counit of $I \dashv I^*$,
                \item bottom horizontal: the unit of $F \dashv F^*$,
                \item right vertical: the counit of $G \dashv G^*$, whiskered with $F^*$ and $F$ from the
            left and right, respectively,
                \item top horizontal: the composite
                    \[
                        I I^* \to I H^* H I^* \to F^* G G^* F
                    \]
                    of the (whiskering of the) unit of $H \dashv H^*$ and the
                    Beck--Chevalley maps \eqref{eq:bcalpha} and
                    \eqref{eq:bcbeta}.
            \end{enumerate} 
            At this point, it is left to the reader to verify that the square commutes up to canonical 
            homotopy yielding the coherent square \eqref{eq:bcsquare1}, in \S \ref{subsec:higherbc}
            we will give a more systematic account of the description of such
            squares and higher--dimensional generalizations.
        \item\label{a2:6} The Beck--Chevalley square
            \begin{equation}
                \label{eq:bcsquare2}
                \begin{tikzcd}
                    HH^* \ar{r} \ar{d} & G^* F F^* G \ar{d} \\
                    \id_{\C} \ar{r} & G^* G 
                \end{tikzcd}
            \end{equation}
            defined analogously to \eqref{eq:bcsquare1} is a biCartesian square
            of endofunctors of $\C$.
\end{enumerate}
\end{defi}

The classical work \cite{gm:rebroussement} provides a remarkable elementary
description of the category of perverse sheaves on the quotient variety
\[
    X = \{ (z_1, z_2, z_3 \in \CC \; | \; z_1 + z_2 + z_3 = 0 \} / S_3
\]
which can be regarded as the space of unordered configurations of three points
in $\CC$ (possibly with multiplicities) with center of mass $0$. Here the
perversity condition is understood with respect to the discriminant
stratification with strata given by configurations of points with
multiplicities at $1$, $2$, and $3$, respectively. In terms of the
elemtary symmetric coordinates
\[
    X \cong \CC^2, (z_1, z_2, z_3) \mapsto (x = z_1 z_2 + z_1 z_3 + z_2 z_3, y = z_1z_2z_3)
\]
the discriminant corresponds to the cuspidal cubic $4 x^3 + 27 y^2 = 0$, a singularity of Dynkin type $\AA_2$. 

\begin{prop}
    \label{rem:decata2}
    Upon passage to Grothendieck groups, an $\AA_2$-schober defines a perverse
    sheaf on $X$ with respect to the discriminant stratification.
\end{prop}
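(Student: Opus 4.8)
The plan is to extract, by passing to $K_0$, the combinatorial classification data for perverse sheaves on the cuspidal cubic due to Granger--Maisonobe \cite{gm:rebroussement}, and to check that the axioms (A2.1)--(A2.6) of an $\AA_2$-schober decategorify exactly to that data. First I would recall that Granger--Maisonobe describe $\mathrm{Perv}(X)$ with respect to the discriminant stratification in terms of a quiver-type datum: three vector spaces $\mathsf{E}_1, \mathsf{E}_2, \mathsf{E}_3$ (associated to the open stratum, the stratum of configurations with one double point, and the stratum with a triple point) together with linear maps between them and invertibility/relation conditions among certain composites --- equivalently, the finite-dimensional representations of an explicit finite quiver with relations. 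The target of the proof is therefore: produce from an $\AA_2$-schober a representation of that quiver, using $\A$, $\B$, $\C$ for the three strata.

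The key steps, in order, are as follows. First, set $A = K_0(\A)$, $B = K_0(\B)$, $C = K_0(\C)$, $D = K_0(\D)$, and note that every exact functor and every exact triangle in the definition descends to additive maps and relations on Grothendieck groups (this is the same mechanism used in Remarks~\ref{rem:decategorificatoin} and \ref{rem:spherical}). From (A2.1)--(A2.2) we get maps $f, f^*, g, g^*, h, h^*, i, i^*$ with $fi = gh$, $i^*f^* = h^*g^*$, and with the twists $t = ff^* - \id$, $s = gg^* - \id$ both invertible. Second, from (A2.3)--(A2.4), the fibers $\alpha$ and $\beta$ being equivalences means that on $K_0$ the Beck--Chevalley maps $hi^* \to g^*f$ and $ih^* \to f^*g$ have invertible ``difference'' classes $[\alpha] = g^*f - hi^*$ and $[\beta] = f^*g - ih^*$; concretely one records $a := [\alpha]\colon B \to C$ and $b := [\beta]\colon C \to B$ as the extra data, isomorphisms, together with the identities $g^*f = hi^* + a$ and $f^*g = ih^* + b$. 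Third, from (A2.5)--(A2.6), the biCartesianness of the Beck--Chevalley squares \eqref{eq:bcsquare1} and \eqref{eq:bcsquare2} gives, on $K_0$, that the total alternating sum of the four corners vanishes, yielding relations of the form $i^*i + f^*gg^*f = \id_B + f^*f$ and $h^*h + g^*ff^*g = \id_C + g^*g$. One then rearranges these, substituting the expressions for $g^*f$, $f^*g$, $t$, $s$, and $a$, $b$, into precisely the relations appearing in the Granger--Maisonobe presentation. Finally, I would cite \cite{gm:rebroussement} for the statement that a collection $(\mathsf{E}_1, \mathsf{E}_2, \mathsf{E}_3)$ with maps satisfying exactly these relations and invertibility conditions is the same thing as a perverse sheaf on $X$ constructible with respect to the discriminant stratification, which completes the proof. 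Alternatively --- and this may be cleaner to write --- one invokes Theorem~\ref{thm:KS} with $n=2$: the Granger--Maisonobe data is $\mathrm{Fun}(\BS_2, \mathrm{Vect})$, i.e.\ a graded-bialgebra-in-degree-$3$ datum, so it suffices to check that the $K_0$-shadow of an $\AA_2$-schober provides the degree-$3$ components $\mu^{p,q}_{p+q}, \Delta^{p+q}_{p,q}, \sigma_{p,q}$ and the bialgebra compatibility \eqref{eq:bialgebra} for $a+b = c+d = 3$; this is the decategorification of the forthcoming Theorem~\ref{thm:schoberbialgebra} specialized to $n = 2$.

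The main obstacle I anticipate is purely bookkeeping: matching signs and the direction of maps between the ``fiber'' conventions used here (twist $=$ fiber of counit, cotwist $=$ cofiber of unit, $\alpha, \beta$ $=$ fibers of Beck--Chevalley maps) and whichever normalization Granger--Maisonobe (or Kapranov--Schechtman) use for their quiver maps. In the stable setting a fiber sequence $X \to Y \to Z$ gives $[Y] = [X] + [Z]$, so each ``fiber of a map'' becomes a difference of classes on $K_0$, and one must be careful that, e.g., the relation $ff^* = \id + t$ (rather than $t - \id$) comes out with the right sign, and likewise that the two biCartesian squares give independent relations rather than one relation twice. A secondary, more conceptual point to address is that an equivalence of $\infty$-categories induces an \emph{isomorphism} on $K_0$, so $\alpha, \beta$ being equivalences really does force $a, b$ to be isomorphisms and not merely maps --- this is what matches the invertibility hypotheses in the classical datum, and it is worth stating explicitly. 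Once these conventions are pinned down, the verification is mechanical. I would present the proof in the second style (via Theorem~\ref{thm:KS} or the $n=2$ case of Theorem~\ref{thm:schoberbialgebra}) if the forward references are available, and otherwise give the direct quiver-matching argument sketched above.
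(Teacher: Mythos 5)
Your plan is essentially the paper's proof: pass to $K_0$, record the decategorified schober data $(A,B,C,D;f,g,h,i,\dots)$ together with the invertible classes $t,s,a,b$ and the two relations coming from the biCartesian Beck--Chevalley squares, then rearrange into the Granger--Maisonobe relations of \cite[II.6]{gm:rebroussement} and invoke their classification theorem. Two points to fix, both of the ``bookkeeping'' type you yourself flagged but which as written are actual errors. First, the relations you extract from \eqref{eq:bcsquare1} and \eqref{eq:bcsquare2} pair the corners incorrectly: biCartesianness gives (initial corner)$+$(terminal corner)$=$(sum of the off-diagonal corners), i.e.\ $ii^* + f^*f = \id_B + f^*gg^*f$ and $hh^* + g^*g = \id_C + g^*ff^*g$, not $i^*i + f^*gg^*f = \id_B + f^*f$ (note also that with the paper's composition convention the endofunctor of $\B$ is $ii^*$, not $i^*i$). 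With your version of the relations the subsequent rearrangement would not go through. Second, ``rearranging into precisely the GM relations'' is not literally a substitution: the paper has to derive the auxiliary identities $tsf = ga$, $af^* = g^*ts$, $fb = stg$, $bg^* = f^*st$ and then $hh^* = \id + af^*s^{-1}g$, $ii^* = \id + bg^*t^{-1}f$, using invertibility of $t$ and $s$; this is short but is the actual content of the verification, so it should be carried out rather than asserted. Your alternative route via Theorem~\ref{thm:KS} and the $n=2$ case of Theorem~\ref{thm:schoberbialgebra} is not what the paper does for this proposition (and would be circular as a proof here, since Theorem~\ref{thm:schoberbialgebra} comes later); the direct GM comparison is the intended argument.
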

\begin{proof}
    Passing to Grothendieck groups, we obtain abelian groups $A,B,C,D$ along
    with additive maps 
    \begin{equation}
        \label{eq:a2sheaf}
        \begin{tikzcd}[sep=8ex]
            A \ar[bend left=10]{r}{i} \ar[bend left=10]{d}{h} & \ar[bend left=10]{l}{i^*}\ar[bend left=10]{d}{f}  B \\
            C \ar[bend left=10]{u}{h^*} \ar[bend left=10]{r}{g}  & \ar[bend left=10]{u}{f^*} \ar[bend left=10]{l}{g^*}  D
       \end{tikzcd}
    \end{equation}
    satisfying the relations
    \begin{enumerate}[label=(\arabic *)]
        \item $fi = gh$ and $i^*f^* = h^* g^*$,
        \item $t := ff^* - \id$ is an automorphism of $D$, 
        \item $s := gg^* - \id$ is an automorphism of $D$, 
        \item $a = h i^* - g^* f$ is an isomorphism from $B$ to $C$,
        \item $b = i h^* - f^* g$ is an isomorphism from $C$ to $B$,
        \item \label{eq:6} $h h^* - g^* f f^* g - \id + g^* g = 0$,
        \item  \label{eq:7} $i i^* - f^* g g^* f - \id + f^* f = 0$.
    \end{enumerate}
    From this, we may deduce
    \begin{align*}
        tsf & = f f^* gg^* f - ff^*f - gg^*f + f = fii^* - f - gg^* f + f \\
             & = ghi^* - gg^*f = ga
    \end{align*}
    and similarly the equations $af^* = g^*ts$, $fb = stg$, and $bg^* = f^*st$.
    Using these relations, we further obtain
    \begin{enumerate}
        \item $hh^* = \id + af^*s^{-1}g$,
        \item $ii^* = \id + bg^*t^{-1}f$.
    \end{enumerate}
    We have now verified all relations from \cite[II.6]{gm:rebroussement} so
    that the main theorem in loc. cit. implies that the data \eqref{eq:a2sheaf}
    does indeed define a perverse sheaf. One can easily show that the relations
    given in \cite[II.6]{gm:rebroussement} are in fact {\em equivalent} to our
    decategorified schober relations (1) -- (7).
\end{proof}

\begin{rem}
    \label{rem:braid} Note that the top stratum of $X$ is homotopy equivalent
    to the unordered configuration of points in $\CC^2$ (without
    multiplicities) which is a classifying space for the braid group
    $\on{Br}_3$ on $3$ strands. In particular, since any perverse sheaf defines
    a (shifted) local system of abelian groups on the top stratum, it gives
    rise to a representation of the braid group via monodromy. In terms of the
    above classification data, the underlying abelian group of this
    representation is $D$, the automorphisms $t$ and $s$ correspond to the
    actions of the two Artin generators of the braid group, and a
    straightforward computation, using the relations \ref{eq:6} and \ref{eq:7}
    shows that, indeed, the braid relation
    \[
        tst = sts
    \]
    is satisfied. 
\end{rem}

In Theorem \ref{thm:a2braid}, we will show that the braid group action of
Remark \ref{rem:braid} in fact has a categorical counterpart: for any
$\AA_2$-schober, we obtain a coherent action of the braid group $\on{Br}_3$ on
the stable $\infty$-category $\D$. This generalizes analogous braid actions
announced in the context of enhanced triangulated categories in
\cite{AL:skein}.
As we will see below, this result, and its generalization to $\AA_n$-schobers,
provides a ``geometric'' explanation for many of the categorical braid group
actions that have been considered in the literature, including
$\AA_n$-configurations of spherical objects in homological mirror symmetry, and
Rouqier complexes in link homology theory. 

\section{\texorpdfstring{$\AA_n$}{An}-schobers}
\label{sec:anschobers}

We will now introduce the concept of an $\AA_n$-schober for general $n \ge 1$.
While our definition of an $\AA_2$-schober given in \S \ref{subsec:a2schober} was
based on an ad-hoc categorification of the classifying data for perverse
sheaves on $\Sym^3(\CC)$ of \cite{gm:rebroussement}, our definition of
an $\AA_n$-schober categorifies the remarkable graded bialgebra relations that
were identified by Kapranov--Schechtman \cite{2102.13321} as classifying data
for perverse sheaves on $\Sym^n(\CC)$. 
As it turns out, these bialgebra relations admit canonical categorifications
that can be described systematically in terms of certain higher--dimensional
Beck--Chevalley cubes that we will introduce now.

\subsection{Higher Beck--Chevalley cubes}
\label{subsec:higherbc}

The conditions of Definition \ref{defi:a2schober}, especially conditions
\ref{a2:5} and \ref{a2:6}, may seem a bit random and unmotivated (in fact, they
were found by a rather naive attempt to ``categorify'' the equations of
\cite[II.6]{gm:rebroussement}). As it turns out, we may in fact think of {\em
all} conditions appearing in the definition of an $\AA_2$-schober (and more
generally $\AA_n$-schobers, as we will see later) in terms of
higher--dimensional cubical variants of Beck--Chevalley maps and their total
fibers. In this section, we develop the relevant concepts of higher
Beck--Chevalley cubes and defects. 

Let us begin by rigorously constructing the Beck--Chevalley map
associated to a coherent square 
\begin{equation}
    \label{eq:commsquare}
    \begin{tikzcd}
        \A \ar{r}{I}\ar[swap]{d}{H} & \B \ar{d}{F}\\
        \C \ar{r}{G} & \D
    \end{tikzcd}
\end{equation}
of stable $\infty$-categories. Denoting by $\Catinfty$ the $\infty$-category of
$\infty$-categories, then such a square is given by a functor $[1]
\times [1] \to \Catinfty$ or, equivalently, via the Grothendieck
construction (aka unstraightening) as a coCartesian fibration $\pi: \X \to
[1]^2$ (cf. \cite[3.2]{lurie:htt}). The assumption that all functors in
\eqref{eq:commsquare} admit right adjoints then amounts to the condition that
$\pi$ is also {\em Cartesian} (\cite[5.2.2.5]{lurie:htt}). We will refer to fibrations which are both
Cartesian and coCartesian as {\em biCartesian fibrations}.

The effect of passing to the Grothendieck construction $\X$ is that it
provides very efficient access to
\begin{enumerate}[label = \arabic *.]
    \item the functors in \eqref{eq:commsquare},
    \item their right adjoints, 
    \item the unit and counit maps associated to the various adjunctions, 
    \item higher--dimensional coherence relations among the above,
\end{enumerate}
in terms of universal properties of the biCartesian fibration $\pi$. For an
introduction to this circle of ideas, we refer the reader to
\cite{DKS:spherical}, only giving one basic example as an introduction here:

\begin{exa}
    \label{exa:F}
    To reconstruct the functor $F$ from $\pi$, consider the $\infty$-category
    of coCartesian edges
    \[
        \E_F = \{  b \overset{!}{\to} d \}
    \]
    in $\X$ with $b \in \B$ and $d \in \D$, where the symbol $!$ signifies that
    the edge is required to be coCartesian. Then the projection map $\E_F \to \B$, taking an edge to
    its source, is a trivial Kan fibration so that it admits a section $s: \B
    \to \E_F$, unique up to contractible choice (\cite[4.3.2.15]{lurie:htt}). Postcomposing this section
    with the target projection functor $\E_F \to \D$ yields a funtor $\B \to
    \D$ which is equivalent to $F$. 

    Similarly, to reconstruct a right adjoint of $F$, consider the
    $\infty$-category of Cartesian edges
    \[
    \E_{F^*} = \{ b \overset{*}{\to} d \}
    \]
    in $\X$ with $b \in \B$ and $d \in \D$. Then projection to $\D$ is a
    trivial Kan fibration and the analogous construction recovers the functor
    $F^*: \D \to \B$. 
\end{exa}

\begin{construction}
    \label{construction:bc}
    We provide a construction of the Beck--Chevalley map as defined in
    \eqref{eq:bcalpha}. To this end, consider the $\infty$-category of diagrams
    \begin{equation}
        \label{eq:refcube2}
        \Y = \{ \begin{tikzcd} a \ar{r}{*}\ar{d}{!} &  b \ar{dd}{!}\\ c'\ar{d}  & \\ c  \ar{r}{*} & d  \end{tikzcd} \}
    \end{equation}
    with $a \in \A$, $b \in \B$, $c,c' \in \C$, $d \in \D$, all edges
    marked $!$ coCartesian, and all edges marked $*$ Cartesian. Note that there
    is a {\em hidden} $2$-simplex that exhibits the composition of the edges $a
    \to c'$ and $c' \to c$, this composite then forms the left vertical edge of
    he depicted (coherent) rectangle.
    Projection to $b$ defines a trivial Kan fibration $\Y \to \B$ (\cite[4.3.2.15]{lurie:htt}) and we
    choose a section $s: \B \to \Y$. Postcomposing this section with the
    projection to the edge $c' \to c$ yields a functor
    \[
        \B \to \Fun([1], \C)
    \]
    or, equivalently, an edge $e$ in $\Fun(\B,\C)$, i.e. a natural
    transformation of functors from $\B$ to $\C$. By Example \ref{exa:F}, we
    see that the source of $e$ is the functor $HI^*$ and the target $G^*F$. We
    denote the edge $e$ as the {\em Beck--Chevalley map} associated to the
    square \eqref{eq:commsquare}. Note that, as is immediate from the
    definition of $\Y$, the Beck--Chevalley map is an equivalence if and only
    if vertical coCartesian edges in $\X$ from $\B$ to $\D$ pull back to
    coCartesian edges from $\A$ to $\B$. We refer to the fiber 
    \[
        \alpha: \B \to \C
    \]
    of the Beck--Chevalley map as the {\em Beck--Chevalley defect} which can be
    regarded as a measure of the failure for coCartesian edges being stable
    under pullback in the above-mentioned sense. 
\end{construction}

\begin{exa}
    \label{exa:bcunit}
    Let $F: \A \to \B$ be an exact functor of stable $\infty$-categories and assume that $F$ admits a right adjoint. We consider the square
    \[
    \begin{tikzcd}
        \A \ar{r}{F} \ar{d}[swap]{F}& \B \ar{d}{\id} \\
        \B \ar{r}{\id} & \B
    \end{tikzcd}
    \]
    Then the Beck--Chevalley map associated to this square can be identified with the counit map
    \[
        FF^* \to \id_{\B}
    \]
    of the adjuntion $F \dashv F^*$.
    In particular, we may interpret the twist functors as Beck--Chevalley
    defects. Similarly, the Beck--Chevalley map associated to the square 
    \[
    \begin{tikzcd}
        \A \ar{r}{\id} \ar[swap]{d}{\id}& \A \ar{d}{F} \\
        \A \ar{r}{F} & \B
    \end{tikzcd}
    \]
    can be identified with the unit map
    \[
        \id_{\A} \to F^* F
    \]
    of the adjunction $F \dashv F^*$. 
\end{exa}

\begin{rem}
    \label{rem:laxmat}
    Consider a coherent square 
    \[
        \begin{tikzcd}
            \A \ar{r}{I}\ar[swap]{d}{H} & \B \ar{d}{F}\\
            \C \ar{r}{G} & \D
        \end{tikzcd}
    \]
    of stable $\infty$-categories with right adjoints. We set $K = FI = GH$ and
    let $\A \oplus_K \D$ denote the lax sum in the sense of \cite{cdw:lax}. Then we have maps described by lax matrices:
    \[
        \left(\begin{tikzcd}
        I^* \ar{d}\\ F
       \end{tikzcd}\right): \B \lra \A \oplus_K \D
    \]
    and 
    \[
        \left(\begin{tikzcd}
            H \ar{r} & G^* 
        \end{tikzcd}\right): \A \oplus_K \D \lra \C
    \]
    The lax matrix product
    \[
        \left(\begin{tikzcd}
        H \ar{r} & G^* 
        \end{tikzcd}\right) (\begin{tikzcd}
        I^* \ar{d} \\
        F 
        \end{tikzcd}) = \fib(HI^* \to G^*F) 
    \]
    yields the Beck--Chevalley defect of the square. This is a first indication that
    the lax matric calculus of \cite{cdw:lax}, in particular its
    homotopy-coherent upgrade, introduced in \cite{rush:thesis}, will serve
    as a useful tool for both concrete computations and coherence problems
    within the context of $\AA_n$-schobers as introduced in \S \ref{sec:anschobers}.
\end{rem}

We now move on to the construction of the squares from \ref{a2:5} and
\ref{a2:6}.

\begin{construction}
    \label{construction:inf-def}
    Let 
    \[
        \pi: \X \to [1]^{k}
    \]
    be a biCartesian fibration modelling a coherent cube of stable
    $\infty$-categories with exact functors admitting right adjoints. We define
    certain fundamental functors called {\em inflation} and {\em deflation}.
    Consider the map of posets
    \[
        [1] \times [1]^{k} \to [1]^{k},\; (i,j) \mapsto 
        \begin{cases} 
            0 & \text{if $i = 0$,}\\
            j & \text{if $i = 1$.}
        \end{cases}
    \]
    which, upon geometric realization, amounts to a homotopy contracting the
    cube to its initial vertex. Let 
    \[
        \Y \subset \Fun_{[1]^{k}}([1] \times [1]^{k}, \X)
    \]
    be the full subcategory given by diagrams such that all edges of the form
    $(0,x) \to (1,x)$ in $[1] \times [1]^{k}$ map to $\pi'$-coCartesian edges. 
    We have projection functors
    \[
    \begin{tikzcd}
        \Fun([1]^{k}, \X_{00 \ldots 0}) & \ar{l}{q} \Y \ar{r}{p} & \Fun_{[1]^{k}}([1]^{k}, \X)
    \end{tikzcd}
    \]
    where, by \cite[4.3.2.15]{lurie:htt}, the projection map $q$ is a trivial
    Kan fibration. We choose a section $s$ of $q$ (contractible choice) and define the {\em inflation functor}
    \[
        \inffun_{\X} := p \circ s: \Fun([1]^{k}, \X_{00\ldots0}) \to \Fun_{[1]^{k}}([1]^{k}, \X).
    \]
    Its adjoint, which we refer to as the {\em deflation functor}
    \[
        \deffun_{\X}: \Fun_{[1]^{k}}([1]^{k}, \X) \to \Fun([1]^{k}, \X_{00\ldots0})
    \]
    can be constructed explicitly via a variant of $\Y$ replacing the coCartesian edges by Cartesian ones. 
\end{construction}

The higher--dimensional schober conditions will be formulated in terms of total
fibers of cubical diagrams (see \cite[Appendix A]{djw:auslander} for some basic
background). For additional flexibility, we define total fibers for more
general diagrams:

\begin{defi}
    \label{defi:totalfiber}
    For any simplicial set $K$, we consider the cone $K^{\lhd}$ with cone point
    $\emptyset$ and define the {\em total fiber} of a diagram
    \[
        q: K^{\lhd} \to \C
    \]
    valued in an $\infty$-category $\C$, as the fiber of the canonical map
    \[
        q(\emptyset) \to \lim q|K.
    \]
\end{defi}

\begin{construction}
    \label{construction:bccube}
    We now provide the general construction of a Beck--Chevalley cube of dimension
    $n-1$ associated to an $n$-dimensional cube of exact functors of stable
    $\infty$-categories ($n \ge 2$) with right adjoints 
    \begin{equation}
        \label{eq:cubestab}
        [1]^n \to \Catinfty
    \end{equation}
    corresponding to a biCartesian fibration
    \[
        \pi: \X \to [1]^{n}.
    \]
    We compose the biCartesian fibration $\pi: \X \to [1]^n$ with the
    projection $q:[1]^n \to [1]^2$ to the first two coordinates.
    Since this latter map is a product fibration, it is biCartesian, so that
    the composite $q \pi$ is biCartesian as well. It models the square 
    \[
    \begin{tikzcd}
        \X_{00} \ar{r}\ar{d} & \X_{01} \ar{d}\\
        \X_{10} \ar{r} & \X_{11}
    \end{tikzcd}
    \]
    where $\pi': \X_{ij} \to [1]^{n-2}$ denote the Grothendieck constructions of the $(n-2)$--dimensional cube
    obtained from \eqref{eq:cubestab} by fixing the first two coordinates of $[1]^n$ to $(i,j)$.
    We thus obtain a Beck-Chevalley map
    \[
        \beta_{\X}: \X_{01} \to \Fun([1], \X_{10})
    \]
    over $[1]^{n-2}$, which is the first ingredient of our construction. Using
    left Kan extension along the includion $i$ of the initial vertex in
    $[1]^{n-2}$, along with the inflation and deflation functors from
    Construction \ref{construction:inf-def}, we form the composition 
    \begin{equation}
        \label{eq:compositebc}
        \begin{tikzcd}
        \X_{010 \ldots 0} \ar{d}{i_!} \\
        \Fun([1]^{n-2}, \X_{010 \ldots 0}) \ar{d}{\inffun_{\X_{01}}}\\
        \Fun_{[1]^{n-2}}([1]^{n-2}, \X_{01}) \ar{d}{\beta_{\X}}\\
        \Fun([1], \Fun_{[1]^{n-2}}([1]^{n-2}, \X_{10})) \ar{d}{\deffun_{\X_{10}}}\\
        \Fun([1], \Fun([1]^{n-2}, \X_{100 \ldots 0}))\ar{d}{\cong}\\
        \Fun([1]^{n-1}, \X_{100 \ldots 0}).
        \end{tikzcd}
    \end{equation}
    Finally, we repackage the composite \eqref{eq:compositebc}
    as a cube
    \begin{equation}
        \label{eq:defbccube}
        [1]^{n-1} \to \Fun(\X_{010 \cdots 0}, \X_{100 \cdots 0})
    \end{equation}
    referred to as the {\em Beck--Chevalley cube} associated to $\X$. Further,
    we refer to the total fiber of the cube \eqref{eq:defbccube} as the {\em
    higher Beck--Chevalley defect} of $\X$.
\end{construction}

\begin{exa}
    \label{exa:bcsquare}
    We unravel the construction of the Beck--Chevalley square associated to a
    coherent $3$--dimensional cube of stable $\infty$-categories with right
    adjoints explicitly. We are given a functor
    \begin{equation}
        \label{eq:cubest}
        [1]^3 \to \Catinfty,\; ijk \mapsto \X_{ijk}
    \end{equation}
    or, equivalently, as a biCartesian fibration $\pi:
    \X \to [1]^3$. In this situation, we may describe the composite \eqref{eq:compositebc} by a
    single correspondence as follows: Consider the $\infty$-category $\Y$ of
    diagrams in $\X$ of the form
        \begin{equation}
            \label{eq:higherbccube}
        \begin{tikzpicture}[baseline= (a).base]
            \node[scale=.8] (a) at (0,0){
            \begin{tikzcd}
                    x_{000} \ar{rrr}{*}\ar{ddrr}\ar{dd}{!} & & & x_{010}\ar{ddrrr}{!}\ar{ddddd}{!} & & \\
                                                  &&&&\\
                    x_{100}' \ar{dr}\ar{ddd} & & x_{001}\ar{dd}{!}\ar{rrrr}{*}&& & & x_{011} \ar{ddddd}{!}\\
                                            & x_{100}''\ar{dr}{*}\ar{ddd} & & \\
                                                    &&x_{101}'\ar{ddd}&&& \\
                    x_{100}\ar{dr}\ar{rrr}{*} & &&  x_{110}\ar{ddrrr}{!}\\
                                                    & x_{100}''' \ar{dr}{*} &&&\\
                             &&x_{101} \ar{rrrr}{*} && & & x_{111}
            \end{tikzcd}
            };
        \end{tikzpicture} 
        \end{equation}
        where the subindex of each vertex indicates its image in $[1]^3$ under
    $\pi$, the edges marked with $!$ are coCartesian, and the edges marked with
    $*$ are Cartesian. Projection to the vertex $x_{010}$ defines a trivial Kan
    fibration $\Y \to \X_{010}$ (\cite[4.3.2.15]{lurie:htt}). We choose a section and postcompose with the
    projection functor to the square
        \begin{equation}
            \label{eq:bcsquare}
        \begin{tikzcd}
            x_{100}' \ar{r}\ar{d} & x_{100}''\ar{d} \\
            x_{100} \ar{r} & x_{100}'''
        \end{tikzcd}
        \end{equation}
    in $\X_{100}$ and obtain the Beck--Chevalley square 
    \[
        q: [1]^2 \to \Fun(\X_{010}, \X_{100})
    \]
    as introduced in Construction \ref{construction:bccube}. This more direct
    description of the Beck--Chevalley square can be generalized to cubes of
    arbitrary dimension in a straightforward fashion.
\end{exa}

\begin{rem}
    \label{rem:higherbc}
    We may interpret the Beck--Chevalley square associated to the cubical
    diagram \eqref{eq:cubest} as a morphism between the Beck--Chevalley map of
    the back face and a two-sided whiskering of the Beck--Chevalley map of the front face
    of the cube. It thus induces a comparison of Beck--Chevalley defects of these
    two squares and the higher Beck--Chevalley defect may be interpreted as a
    measure for the deviation of this comparison map from being an equivalence. 
\end{rem}

\begin{rem}
    \label{rem:valuesbc}
    One can easily read off the functors from $\X_{010}$ to $\X_{100}$ that
    form the vertices of the Beck--Chevalley square by tracing the zigzags of
    coCartesian and Cartesian edges that connect the vertex $x_{010}$ to the
    respective vertex of \eqref{eq:bcsquare}. This zigzag translates to a
    composite of successive applications of either one of the original functors
    of the cubical diagram or its adjoint. Denoting each zigzag by the list of
    vertices in $[1]^3$ to which it projects written from top to bottom,
    we may describe the Beck--Chevalley square very conveniently by the
    notation
    \[
    \begin{tikzcd}
        \substack{010\\000\\100} \ar{r}\ar{d} &  \substack{010\\011\\001\\101\\100} \ar{d} \\
        \substack{010\\110\\100} \ar{r} &  \substack{010\\111\\100}.
    \end{tikzcd}
    \]
    We will use this notation extensively in the subsequent sections. 
\end{rem}

\begin{exa}
    \label{exa:a2bc}
    Given a coherent square of stable $\infty$-categories as in \eqref{eq:commsquare}, we may form the cube
    \[
        \begin{tikzcd}[sep=1ex]
        \A \ar{dd}\ar{dr} \ar{rr} &  & \B\ar{dd} \ar{dr} & \\
                            & \C \ar{rr}\ar{dd} &  & \D \ar{dd} \\
        \B \ar{rr}\ar{dr} & & \B \ar{dr} & \\
                           & \D \ar{rr}& & \D .
    \end{tikzcd}
    \]
    Then the associated Beck--Chevalley square is precisely the square
    considered in \ref{a2:5} (cf. Remark \ref{rem:valuesbc}). The square
    considered in \ref{a2:6} is the Beck--Chevalley square of a similar cube
    obtained by swapping $\B$ and and $\C$. The $\AA_2$-schober conditions
    \ref{a2:5} and \ref{a2:6} are therefore equivalent that the higher
    Beck--Chevalley defects associated to the above-mentioned cubical diagrams
    vanish (a square is biCartesian if and only if its total fiber is zero).
\end{exa}

\begin{exa}
    \label{exa:4}
    Using the conventions and notation from Remark \ref{rem:valuesbc}, the
    Beck--Chevalley cube associated to a $4$--dimensional cube of stable
    $\infty$-categories takes the following form:
    \begin{equation}
        \label{eq:3dbccube}
    \begin{tikzcd}[sep=scriptsize]
    \substack{0100\\0000\\1000} \ar{rr}\ar{dd}\ar{dr} &  & \substack{0100\\0110\\0010\\1010\\1000} \ar{dd}\ar{dr} & \\
                                               & \substack{0100\\0101\\0001\\1001\\1000}\ar[crossing over]{rr} & & \substack{0100\\0111\\0011\\1011\\1000}\ar{dd}  \\
    \substack{0100\\1100\\1000} \ar{rr}\ar{dr} &  & \substack{0100\\1110\\1000}\ar{dr} & \\
                                                               & \substack{0100\\1101\\1000}\ar{rr}\ar[from=uu, crossing over]  & & \substack{0100\\1111\\1000}
    \end{tikzcd}
    \end{equation}
\end{exa}

\subsection{Definition}

By a {\em composition} of a natural number $n \ge 0$ we mean an ordered tuple of
positive natural numbers $(n_1, n_2, \ldots, n_k)$ summing up to $n$. In the extremal case $n=0$ the empty tuple serves as the unique composition. We consider the set
$\Comp(n)$ of compositions of a fixed number $n$ as a poset with the order $<$
generated by 
\[
    (n_1, n_2, \ldots, n_k) > (n_1, \ldots, n_i + n_{i+1}, \ldots, n_k ).
\]
To avoid cluttered notation, we will typically write a composition simply as
$n_1 n_2 \ldots n_k$. 

\begin{exa}
    \label{exa:composition23}
    The map
        \begin{equation}
            \label{eq:cubicalcomp}
            \Comp(n) \to \{0,1\}^{n-1}, \; n_1 n_2 \ldots n_k \mapsto (\underbrace{0,\ldots,0}_{n_1-1},1,\underbrace{0,\ldots,0}_{n_2-1},1,0 \ldots 0,1,\underbrace{0,\ldots,0}_{n_k-1})
        \end{equation}
    is an order-preserving bijection, identifying $\Comp(n)$ with a cube of dimension $n-1$. 
    For example, the compositions of $3$ form the square
        \begin{equation}
            \label{eq:comp3}
        \begin{tikzcd}
            3 \ar{r}\ar{d} & 21 \ar{d} \\
            12 \ar{r} & 111
        \end{tikzcd}
        \end{equation}
    and the compositions of $4$ form the cube
    \begin{equation}
        \label{eq:comp4}
    \begin{tikzcd}[sep=1ex]
    4 \ar{dd}\ar{dr} \ar{rr} &  & 31\ar{dd} \ar{dr} & \\
                        & 22 \ar{rr}\ar{dd} &  & 211 \ar{dd} \\
    13 \ar{rr}\ar{dr} & & 121 \ar{dr} & \\
                       & 112 \ar{rr}& & 1111 .
    \end{tikzcd}
    \end{equation}
\end{exa}

Our definition of an $\AA_n$-schober to $n \ge 3$, is based on a more
systematic study of perverse sheaves on $\Sym^{n+1}(\CC)$ due to
Kapranov--Schechtman. In \cite{2102.13321}, they show that the linear-algebraic relations
appearing in the classification of such sheaves can be interpreted as {\em
graded bialgebra relations} in a fixed homogeneous degree $n$ valued in a
suitably defined braided monoidal category freely generated by a single object
of degree $1$: Given a graded vector space $\oplus_{n \in \NN} V_n$ equipped
with multiplication $\mu$ and comultiplication $\Delta$ of degree $0$, such
relations amount to:
\begin{itemize}
    \item For every pair $ab$, $cd$ compositions of $n$, we have
        \[
            \Delta^{n}_{cd} \circ \mu^{ab}_n = \sum (\mu_{c}^{ik} \otimes \mu^{d}_{jl}) \circ (\id_i \otimes T_{jk} \otimes \id_l) \circ  (\Delta^{a}_{ij} \otimes \Delta^{b}_{kl})
        \]
        where $T_{jk}$ denotes the braiding isomorphism $V_j \otimes V_k \cong V_k \otimes V_j$.
\end{itemize}

We will now explain how these bialgebra relations admit a very natural categorification
via higher--dimensional Beck--Chevalley cubes. 

\begin{defi}
    \label{defi:bifactorizationcube}
    We define the {\em bifactorization cube} $Q(ab,cd)$,
    associated to a pair $ab$, $cd$ of compositions of $n$ recursively by the
    following set of formulas. Here, we refer to the cubical coordinates for
    $\Comp(n)$ from \eqref{eq:cubicalcomp}.
    \begin{enumerate}
        \item We have
            \[
                Q(11,11) = \begin{tikzcd} 0\ar{d} \ar{r} & 1 \ar{d}\\ 1 \ar{r} & 1 \end{tikzcd}
            \]
            or, in terms of $\Comp(2)$: 
            \[
                Q(11,11) = \begin{tikzcd} 2\ar{d} \ar{r} & 11 \ar{d}\\ 11 \ar{r} &11. \end{tikzcd}
            \]
        \item For $a > 1$, we have
            \[
                Q(a1,1a) = \{0,1\} \times \{\overbrace{0\ldots0}^{a-1} \}\times \{0,1\}
            \]
            or, in terms of $\Comp(a+1)$: 
            \[
                Q(a1,1a) = \begin{tikzcd} a+1\ar{d} \ar{r} &a1 \ar{d}\\ 1a \ar{r}& 1(a-1)1. \end{tikzcd}
            \]
        \item For $a \ge b \ge 1$, we have 
            \[
                Q((a+1)(b+1),(b+1)(a+1)) = \bigcup_{\epsilon \in \{0,1\}} \{\epsilon\} \times Q(ab,ba) \times \{\epsilon\}
            \]
        \item For $a > b \ge 1$, we have 
            \[
                Q(a(b+1),b(a+1)) = Q(ab,ba) \times \{0,1\}
            \]
            and
            \[
                Q((a+1)b,(b+1)a) = \{0,1\} \times Q(ab,ba) 
            \]
        \item For $a > b \ge 1$ and $m \ge 1$ we have 
            \[
                Q(a(b+m+1),b(a+m+1)) = Q(a(b+m),b(a+m)) \times \{0\}
            \]
            and
            \[
                Q((a+m+1)b,(b+m+1)a) = \{0\} \times Q((a+m)b,(b+m)a) 
            \]
        \item The above relations uniquely determine $Q(ab,cd)$ for $a \ge c$.
            To describe $Q(ab,cd)$ for $a < c$, we may pass to the transpose: For $a,b,c,d
            \ge 1$, we set
            \[
                Q(ab,cd) = Q(cd,ab)^{\on{T}}.
            \]
    \end{enumerate}
\end{defi}

\begin{exa}
    \label{exa:} Let $(ab,cd)$ be a pair of compositions of $n$ with $a \ge c$ and $a <
    d$, so that there exists $m > 0$ with $d = a+m$, and therefore $b = c+m$.
    Then
    \[
        Q(a(c+m),c(a+m)) = Q(ac,ca) \times \{0,1\} \times \{0\}^{m-1}
    \]
    Setting $a = c+l$, for $l \ge 0$, we further have
    \begin{align*}
        Q(ac,ca) 
        &= \cup_{\epsilon_1, \epsilon_2, \ldots, \epsilon_{c-1}}
        \{\epsilon_1 \epsilon_2 \ldots \epsilon_{c-1} \} \times Q((l+1)1,
        1(l+1)) \times \{\epsilon_{c-1} \ldots \epsilon_2 \epsilon_{1} \}
    \end{align*}
    Similarly, the above recursive formulas uniquely determine the
    bifactorization cube for any pair $(ab,cd)$ of compositions of $n$. 
\end{exa}

\begin{defi}
    \label{defi:highertwist}
    Let 
    \[
        \X: \Comp(n+1) \to \Stab
    \]
    be a coherent diagram of exact functors of stable $\infty$-categories with
    right adjoints and let $ab$ be a composition of $n+1$. We obtain an exact
    functor
    \[
        T_{ab}: \X_{ab} \to \X_{ba}
    \]
    defined as the Beck--Chevalley defect of the pullback of $\X$ to the
    bifactorization cube $Q(ab,ba)$. We call $T_{ab}$ the {\em higher twist
    functor} associated to the composition $ab$.
\end{defi}

\begin{defi}
    \label{defi:anschober}
    A coherent diagram 
    \begin{equation}
        \label{eq:compncube}
        \X: \Comp(n+1) \to \Stab
    \end{equation}
    of exact functors of stable $\infty$-categories is called an $\AA_n$-schober if the following five conditions hold.
    \begin{enumerate}
        \item[\textbf{(Adjunctability)}] All functors of the diagram $\X$ admit right adjoints. 
        \item[\textbf{(Recursiveness)}] For every proper composition $n_1 n_2 \ldots n_k$ of $n+1$, and
            every $1 \le i \le k$, the restriction of $\X$ along
            \[
                \Comp(n_i) \to \Comp(n+1),\; t \mapsto n_1 \ldots n_{i-1}\; t\; n_{i+1} \ldots n_k
            \]
            is an $\AA_{n_i-1}$-schober. Here, an $\A_{0}$-schober is simply an object in $\St$. 
        \item[\textbf{(Far-commutativity)}]  For every composition $ab$ of $n+1$ and compositions $c_0 \le c_1$
            and $d_0 \le d_1$ of $a$ and $b$, respectively, the square
            \[
            \begin{tikzcd}
                \X_{c_0 d_0} \ar{r}\ar{d} & \ar{d}\X_{c_0 d_1}\\
                \X_{c_1 d_0} \ar{r} & \X_{c_1 d_1}
            \end{tikzcd}
            \]
            and its transpose satisfy the Beck--Chevalley condition (i.e. the
            Beck--Chevalley map is an equivalence).
        \item[\textbf{(Twist invertibility)}] For every composition $ab$ of $n+1$, the higher twist functor
            $T_{ab}: \X_{ab} \to \X_{ba}$ is an equivalence of stable
            $\infty$-categories.
        \item[\textbf{(Defect vanishing)}] For every pair of compositions $ab$, $cd$ of $n+1$ with $a \ne d$,
            the Beck--Chevalley defect $R_{ab,cd}$ associated to the pullback
            of $\X$ to the bifactorization cube $Q(ab,cd)$ is the zero functor
            $\X_{ab} \to \X_{cd}$. 
    \end{enumerate}
\end{defi}

\begin{exa}
    \label{exa:a3schober}
    An $\AA_3$-schober is a coherent diagram 
    \[
        \X: \Comp(4) \to \Stab
    \]
    where $\Comp(4)$ is the composition cube
    \[
    \begin{tikzcd}[sep=1ex]
    4 \ar{dd}\ar{dr} \ar{rr} &  & 31\ar{dd} \ar{dr} & \\
                        & 22 \ar{rr}\ar{dd} &  & 211 \ar{dd} \\
    13 \ar{rr}\ar{dr} & & 121 \ar{dr} & \\
                       & 112 \ar{rr}& & 1111 
    \end{tikzcd}
    \]
    such that 
    \begin{enumerate}
        \item All functors of the diagram $\X$ admit right adjoints. 
        \item The restrictions to 
                \[
                \begin{tikzcd}
                    31 \ar{r}\ar{d} & 211 \ar{d}\\
                    121 \ar{r} & 1111 
                \end{tikzcd}
                \quad\quad
               \text{ and}
               \quad\quad
                \begin{tikzcd}
                    13 \ar{r}\ar{d} & 121 \ar{d}\\
                    112 \ar{r} & 1111 
                \end{tikzcd}
                \]
                are $\AA_2$-schobers, in particular, the restrictions to $211
                \to 1111$, $121 \to 1111$, $112 \to 1111$ are $\AA_1$-schobers.
        \item The restriction to 
                \[
                \begin{tikzcd}
                    22 \ar{r}\ar{d} & 211 \ar{d}\\
                    112 \ar{r} & 1111 
                \end{tikzcd}
                \]
                is an $\AA_1 \times \AA_1$-schober. 
        \item The following higher twist functors are equivalences
            \begin{enumerate}
                \item $T_{13}: \X_{13} \to \X_{31}$ defined as the total fiber of
                    \[
                        \begin{tikzcd}
                            \substack{13\\4\\31} \ar{r} &  \substack{13\\121\\31} 
                        \end{tikzcd}
                    \]
                \item $T_{22}: \X_{22} \to \X_{22}$ defined as the total fiber of
                    \[
                        \begin{tikzcd}
                            \substack{22\\4\\22} \ar{r}\ar{d} &\ar{d}  \substack{22\\1111\\121\\1111\\22} \\
                            \substack{22} \ar{r} &  \substack{22\\1111\\22} 
                        \end{tikzcd}
                    \]
                \item $T_{31}: \X_{31} \to \X_{13}$ defined as the total fiber of
                    \[
                        \begin{tikzcd}
                            \substack{31\\4\\13} \ar{r} &  \substack{31\\121\\13} 
                        \end{tikzcd}
                    \]
            \end{enumerate}
        \item The Beck-Chevalley squares corresponding to the factorization cubes $Q(13,13)$ and $Q(31,31)$ are biCartesian, these are:
                    \[
                        \begin{tikzcd}
                            \mathrm{(a)} &
                            \substack{13\\4\\13} \ar{r}\ar{d} &\ar{d}  \substack{13\\112\\22\\112\\13} \\
                           & \substack{13} \ar{r} &  \substack{13\\112\\13} 
                        \end{tikzcd}
                        \quad\quad
                        \text{ and}
                        \quad\quad
                        \begin{tikzcd}
                            \mathrm{(b)}  &\substack{31\\4\\31} \ar{r}\ar{d} &\ar{d}  \substack{31\\211\\22\\211\\31} \\
                            &\substack{31} \ar{r} &  \substack{31\\211\\31} 
                        \end{tikzcd}
                     \]
    \end{enumerate}
\end{exa}

\subsection{Categorical bialgebra relations}
\label{sec:grbialg}

We explain how the defining conditions of an $\AA_n$-schober from Definition
\ref{defi:anschober} are in fact ``equivalent'' to certain categorical analogs of bialgebra
relations of \cite{2102.13321}. The phenomenon can be best seen by looking at a specific example
which already captures the general argument:

\begin{exa}
    \label{exa:43-25} 
    Let $\X$ be an $\AA_6$-schober and consider the pair $(43,25)$ of compositions
    of $7$. The bifactorization cube $Q(43,25)$ is a $4$-dimensional cube with back face
    \[
        \begin{tikzcd}[sep=scriptsize]
        7 \ar{rr}\ar{dr}\ar{dd} & & 43 \ar{dd}\ar{dr}& \\
                         &142\ar{dd}\ar{rr} &&1312 \ar{dd}\\
        25 \ar{rr}\ar{dr} & & 223\ar{dr} & \\
                          &1132\ar{rr}&& 11212 
    \end{tikzcd}
    \]
    and front face 
    \[
    \begin{tikzcd}[sep=scriptsize]
        61 \ar{rr}\ar{dr}\ar{dd} & & 421\ar{dd}\ar{dr}& \\
                         &1411\ar{dd}\ar{rr} &&13111 \ar{dd}\\
        241\ar{rr}\ar{dr} & & 2221\ar{dr} & \\
                          &11311\ar{rr}&& 112111.
    \end{tikzcd}
    \]
    Using the conventions and notation from Remark \ref{rem:valuesbc}, the
    associated Beck--Chevalley cube (cf. Example \ref{exa:4}) is
    \begin{equation}
        \label{eq:3dbccubeexa}
    \begin{tikzcd}[sep=scriptsize]
    \substack{43\\7\\25} \ar{rr}\ar{dd}\ar{dr} &  & \substack{43\\1312\\142\\1132 \\25} \ar{dd}\ar{dr} & \\
                                               & \substack{43\\421\\61\\241\\25}\ar[crossing over]{rr} & & \substack{43\\13111\\1411\\11311\\25}\ar{dd}  \\
    \substack{43\\223\\25} \ar{rr}\ar{dr} &  & \substack{43\\11212\\25}\ar{dr} & \\
                                                               & \substack{43\\2221\\25}\ar{rr}\ar[from=uu, crossing over]  & & \substack{43\\112111\\25}
    \end{tikzcd}
    \end{equation}
    Unravelling the definitions, we observe that for the back right vertical map, we have
    \[
        \fib(\substack{43\\1312\\142\\1132 \\25} \to \substack{43\\11212\\25}) \simeq \substack{1132\\25} \circ (\id_{1} \otimes T_{31} \otimes \id_2) \circ \substack{43\\1312} 
    \]
    and the total fiber of the front face of \eqref{eq:3dbccubeexa} has total fiber equivalent to 
    \[
         \substack{241\\25} \circ (T_{42} \otimes \id_1) \circ \substack{43\\421}.
    \]
    The $\AA_6$-schober condition that the total fiber $R_{43,25}$ of the
    Beck--Chevalley cube \eqref{eq:3dbccubeexa} is zero (so that the cube is
    biCartesian), therefore implies that we obtain a canonical Waldhausen cell (all squares are biCartesian)
    \begin{equation}
        \label{eq:waldhausen}
    \begin{tikzcd}
        \substack{241\\25} \circ (T_{42} \otimes \id_1) \circ \substack{43\\421} \ar{r}\ar{d} & x \ar{r}\ar{d} & \substack{43\\7\\25} \ar{d} \\
            0  \ar{r}  & \ar{d}\ar{r} \substack{1132\\25} \circ (\id_{1} \otimes T_{31} \otimes \id_2) \circ \substack{43\\1312}  & \ar{d} y\\
                       & 0        \ar{r}  & \substack{43\\223\\25} 
    \end{tikzcd}
    \end{equation}
    where $x$ is the fiber of the map
    \[
            \substack{43\\7\\25} \to \substack{43\\223\\25} 
    \]
    and $y$ is the limit of the punctured back face of \eqref{eq:3dbccubeexa}. In other words, the functor 
    \[
        \substack{43\\7\\25}: \X_{43} \to \X_{25}
    \]
    is exhibited as an extension of the diagonal terms of \eqref{eq:waldhausen}, yielding a categorification of the graded bialgebra relation
    \begin{align*}
        \Delta^{7}_{2,5} \circ \mu^{4,3}_7 & =  (\mu_{2}^{02} \otimes \mu_{5}^{41}) \circ (T_{42} \otimes \id_1) \circ  (\Delta^{4}_{04} \otimes \Delta^{3}_{21})\\
                                           & + (\mu_{2}^{11} \otimes \mu_{5}^{32}) \circ (\id_1 \otimes T_{31} \otimes \id_2) \circ  (\Delta^{4}_{13} \otimes \Delta^{3}_{12})\\
                                           & + (\mu_{2}^{20} \otimes \mu_{5}^{23}) \circ (\Delta^{4}_{22} \otimes \Delta^{3}_{03})
    \end{align*}
\end{exa}

The type of diagram appearing in \eqref{eq:waldhausen} is an instance of a cell
in Waldhausen's $\S$-construction which, in general, can be described as
follows: For $n \ge 0$ and a stable $\infty$-category $\C$, denote by 
\[
    \wS_n(\C) \subset \Fun(\Fun([1],[n]), \C)
\]
the full subcategory of diagrams $X$ such that
\begin{enumerate}
    \item the diagonal terms $X_{ii}$ are zero objects and 
    \item for every $0 \le i < j < k \le n$, the square
        \[
        \begin{tikzcd}
            X_{ij} \ar{r}\ar{d}  & X_{ik} \ar{d} \\
            X_{jj} \ar{r} & X_{jk}
        \end{tikzcd}
        \]
        is biCartesian. 
\end{enumerate}
As already alluded to in Example \ref{exa:43-25}, we may think of the datum of
$X \in \wS_n(\C)$ as a ``filtration''
\[
    X_{01} \to X_{02} \to \cdots \to X_{0n}
\]
exhibiting $X_{0n}$ as an extension of its associated graded terms $X_{n-1,n}$,
$X_{n-2,n-1}$, ..., $X_{01}$. We refer to the elements of $\wS_n(\C)$ as
Waldhausen diagrams of length $n$. 

To formulate the categorical bialgebra relations in the most uniform way it is
useful to reformulate Waldhausen diagrams in terms of coherent complexes. In
particular, this will be useful when connecting to examples coming from
differential graded categories where coherent complexes can be nicely modelled
as twisted complexes. 
We define a {\em coherent complex} (of length $n$) in a stable
$\infty$-category $\C$ to be a cubical coherent diagram
\[
    Y: [1]^n \to \C
\]
such that the only vertices with nonzero values are the $(n+1)$ vertices of the
form $(0,\ldots, 0, 1, \ldots, 1)$. We denote the $\infty$-category of
complexes of length $n$ by $\Ch^n(\C)$. To save dimensions, we will schematically
depict a coherent complex by simply writing its nonzero terms
\[
    Y_0 \to Y_1 \to \ldots \to Y_n,
\]
keeping the coherent system of zero homotopies implicit. A coherent complex is
called {\em exact} if its total fiber vanishes. The full subcategory of
$\Ch^n(\C)$ spanned by the exact complexes will be denoted by
$\Ch^n_{\on{exa}}(\C)$.

\begin{exa}
    \label{exa:coherentcomplex}
    A coherent complex of length $2$ is a square of the form
    \[
    \begin{tikzcd}
        Y_{00} \ar{r}\ar{d} & Y_{01} \ar{d}\\
        0 \ar{r} & Y_{11}.        
    \end{tikzcd}
    \]
    It is exact if and only if it is biCartesian. A coherent complex of length
    $3$ is a cube of the form 
    \[
    \begin{tikzcd}[sep=scriptsize]
        Y_{000} \ar{rr}\ar{dr}\ar{dd} & & Y_{001}\ar{dd}\ar{dr}& \\
                         &0\ar{dd}\ar{rr} &&0 \ar{dd}\\
        0\ar{rr}\ar{dr} & & Y_{011} \ar{dr} & \\
                        &0 \ar{rr}&& Y_{111}.
    \end{tikzcd}
    \]
\end{exa}

\begin{prop}
    \label{prop:koszul}
    Let $n \ge 0$ and let $\C$ be a stable $\infty$-category. Then there is an
    equivalence of stable $\infty$-categories
    \[
        \wS_n(\C) \lra \Ch^n_{\on{ex}}(\C)
    \]
    taking a Waldhausen diagram $\{X_{ij}\}$ to the (canonically defined)
    exact coherent complex 
    \[
        X_{0,n} \to X_{n-1,n} \to X_{n-2,n-1}[1] \to \ldots \to X_{0,1}[n-1].
    \]
\end{prop}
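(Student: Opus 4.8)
\textit{Plan.} The plan is to deduce the equivalence by identifying both $\wS_n(\C)$ and $\Ch^n_{\on{ex}}(\C)$ with the $\infty$-category $\Fun([n-1],\C)$ of length-$(n-1)$ filtered objects of $\C$, compatibly with the stated formula. For $\wS_n(\C)$ this is the $\infty$-categorical $\wS_\bullet$-construction: $\wS_n(\C)$ is the full subcategory of $\Fun(\Fun([1],[n]),\C)$ of diagrams that vanish on the diagonal and are left Kan extended from the sub-poset $P \subseteq \Fun([1],[n])$ spanned by the top row $\{(0,j)\}$ and the diagonal $\{(i,i)\}$. Indeed, for such a diagram one computes $A_{ij} \simeq \cofib(A_{0i} \to A_{0j})$, so the biCartesian-square condition holds automatically by pasting of pushout squares; conversely, any $A \in \wS_n(\C)$ satisfies these formulas because of the biCartesian squares with one corner on the diagonal, hence is Kan extended from $P$. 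Restriction to $P$ (together with the diagonal-vanishing) then gives an equivalence $\wS_n(\C) \xrightarrow{\ \simeq\ } \Fun([n-1],\C)$, $\{A_{ij}\} \mapsto (A_{01} \to A_{02} \to \cdots \to A_{0n})$, under which the associated graded of a Waldhausen diagram is the tuple $(A_{01}, A_{12}, \dots, A_{n-1,n})$ with $A_{k-1,k} \simeq \cofib(A_{0,k-1} \to A_{0,k})$; see e.g.\ \cite{lurie:ha}.

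It then remains to produce an equivalence $\Fun([n-1],\C) \xrightarrow{\ \simeq\ } \Ch^n_{\on{ex}}(\C)$ taking a filtration $A_{0,\bullet}$ to the \emph{Koszul complex}
\[
A_{0,n} \to A_{n-1,n} \to A_{n-2,n-1}[1] \to \cdots \to A_{0,1}[n-1],
\]
and to match it against the formula of the proposition. I would construct this functor by induction on $n$. A length-$n$ filtration is a morphism $A_{0,n-1} \to A_{0,n}$ whose source carries a length-$(n-1)$ filtration; applying the inductive construction to the source yields a coherent complex $Y'$ of length $n-1$ with $Y'_0 = A_{0,n-1}$, and one glues: set $Y_0 = A_{0,n}$, $Y_1 = \cofib(A_{0,n-1}\to A_{0,n}) = A_{n-1,n}$, $Y_k = Y'_{k-1}[1]$ for $k \ge 2$, with differential $Y_1 \to Y_2$ assembled from the connecting map of the cofiber sequence $A_{0,n-1}\to A_{0,n}\to A_{n-1,n}$ and the suspension of the first differential of $Y'$. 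Exactness of $Y$ — vanishing of its total fiber — follows by a telescoping computation: the limit of the punctured cube of $Y$ is an iterated fiber which, using the inductive exactness of $Y'$ together with the cofiber sequences $A_{0,k-1}\to A_{0,k}\to A_{k-1,k}$, collapses to $A_{0,n}$, so that the total fiber is $\fib(A_{0,n}\xrightarrow{\ \id\ } A_{0,n})\simeq 0$.

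For the inverse I would run the recursion the other way: from an exact complex $Y_0 \to Y_1 \to \cdots \to Y_n$ recover a length-$(n-1)$ filtration by partial totalizations, the $k$-th term being, up to the appropriate shift, $\lim(Y_{k+1}\to\cdots\to Y_n)$; exactness forces $Y_0\simeq \lim(Y_1\to\cdots\to Y_n)$, recovering the top of the filtration, while the intermediate fiber sequences assemble into a single coherent filtered diagram. That the two constructions are mutually inverse is then a ``two-out-of-three'' argument by induction on $n$: restricting either composite to the subcomplex (resp.\ subfiltration) reduces to the inductive hypothesis, and the outstanding comparison is the identification of associated gradeds degree by degree, which is exactly the cofiber sequence used in the gluing step.

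The principal difficulty throughout will be doing all of this homotopy-coherently rather than merely on homotopy categories: because of the shifts $[k-1]$, the functor $\wS_n(\C)\to\Ch^n_{\on{ex}}(\C)$ is not induced by any map of index posets, so the cube $Y\colon [1]^n\to\C$ must be produced as a genuine functor together with the full coherent system of null-homotopies that is part of the datum of a complex. I would package the inductive gluing as a left Kan extension along a fully faithful inclusion of posets — building an auxiliary diagram on a pushout of cubes whose two ends restrict to the Waldhausen diagram and to its Koszul complex — so that the universal property carries the coherence bookkeeping, in the spirit of the constructions of \S\ref{subsec:higherbc} and \cite{DKS:spherical}. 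Alternatively, and more economically, one can invoke the known coherent equivalence in a stable $\infty$-category between $[n-1]$-filtered objects and length-$n$ exact complexes (a form of the décalage relating the filtered and Beilinson $t$-structures; see \cite{djw:auslander} and the references therein) and simply identify our explicit shuffle-and-shift formula with it. Either way, the substantive work is the coherent bookkeeping of shifts and connecting maps; the homotopy-category statement is essentially classical homological algebra of filtered objects.
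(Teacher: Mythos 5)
Your proposal is correct in outline, but it takes a genuinely different route from the paper, whose ``proof'' is essentially a pointer to the literature: the proposition is presented there as a generalization of the classical derived equivalence between the $A_n$-quiver and the $A_n$-quiver with zero relations, to be deduced from the treatment of Koszul complexes in \cite{cdw:complexes}, from the derivator-based argument of \cite{Bec18}, or from a forthcoming thesis. You instead factor the equivalence through filtered objects: first $\wS_n(\C)\simeq \Fun([n-1],\C)$ by the standard Kan-extension/cofinality argument (your claim that a Waldhausen diagram is left Kan extended from the top row together with the zero diagonal checks out, and the converse by pasting of pushouts is fine), and then an equivalence $\Fun([n-1],\C)\simeq \Ch^n_{\on{ex}}(\C)$ sending a filtration to its shifted associated-graded complex. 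That second step is where all the content sits: your inductive gluing gets the terms, the connecting differentials, and the exactness (telescoping of the total fiber) right, but as you acknowledge it does not by itself produce the coherent cube with its full system of null-homotopies; your two remedies --- packaging the induction as a Kan extension along auxiliary poset inclusions, or invoking the known coherent equivalence between filtered objects and finite exact complexes (d\'ecalage/Beilinson-type results, cf.\ \cite{cdw:complexes}, \cite{Bec18}) --- are both viable, and the second is in substance exactly the move the paper itself makes. What your route buys is an explicit match of the functor with the formula in the statement and a clean reduction to one standard fact about filtered objects in stable $\infty$-categories; what it leaves at the level of a sketch (the coherence bookkeeping) is no less than what the paper leaves to its references.
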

\begin{proof}
    This is an generalization of the classical derived equivalence of representations
    of the $A_n$-quiver and the $A_n$-quiver with zero relations from the
    representation theory of finite dimensional algebras. In the stated form,
    it can be deduced from the discussion of Koszul complexes in
    \cite{cdw:complexes} or the proof based on derivators in \cite{Bec18}, or the
    forthcoming Bachelor thesis of Rio Haeussler Albi for a direct proof of a
    more refined statement describing semiorthogonal decompositions of higher
    length.
\end{proof}

\begin{exa}
    \label{exa:42complex}
    The Waldhausen diagram \eqref{eq:waldhausen} corresponds to the exact coherent complex
    \[
           \substack{43\\7\\25} \to 
           \substack{43\\223\\25} \to 
           \substack{1132\\25} \circ (\id_{1} \otimes T_{31} \otimes \id_2) \circ \substack{43\\1312}[1] \to 
           \substack{241\\25} \circ (T_{42} \otimes \id_1) \circ \substack{43\\421} [2]
    \]
    of length $3$.
\end{exa}

\begin{thm}
    \label{thm:schoberbialgebra}
    Let 
    \[
        \X: \Comp(n+1) \to \Stab
    \]
    be an $\AA_n$-schober. Then, for every pair $(ab,cd)$ of compositions of
    $n+1$, there exists a canonical exact complex of the form
    \begin{equation}
        \label{eq:coherentbialgebra}
        \substack{ab\\n+1\\cd} \to \ldots \to \substack{ikjl\\cd} \circ (\id_i \otimes T_{jk} \otimes \id_l) \circ \substack{ab\\ijkl}[m] \to \ldots
    \end{equation}
    where the positive terms of the complex correspond to the tuples $(i,j,k,l)$ such
    that $a = i+j$, $b = k+l$, $c = i+k$, and $d=j+l$, ordered by the index
    $j$, and the $p$th term of the complex is shifted by $m = p-1$. 
    In particular, for the pair $(ab,ba)$, we obtain an exact complex of the form
    \begin{equation}
        \label{eq:twistbialgebra}
        \substack{ab\\n+1\\ba} \to \cdots \to T_{ab}[m].
    \end{equation}
\end{thm}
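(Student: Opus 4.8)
The plan is to induct on $n$, running the recursion of Definition~\ref{defi:bifactorizationcube} for the bifactorization cube $Q(ab,cd)$ in parallel with the recursive construction of higher Beck--Chevalley cubes from Construction~\ref{construction:bccube}; Example~\ref{exa:43-25} is the prototype of the general argument. The base cases are small: for $n\le 1$ the only pair is $(11,11)$ and the claimed complex $FF^{*}\to\id_{\B}\to T_{11}[1]$ is exact precisely because $T_{11}$ is the fiber of the counit $FF^{*}\to\id_{\B}$ (Definition~\ref{defi:a1schober}).

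First I would make precise the passage from cube to complex. Pulling $\X$ back along $Q(ab,cd)$ and applying Construction~\ref{construction:bccube} yields a coherent cube of functors $\X_{ab}\to\X_{cd}$ whose total fiber is the Beck--Chevalley defect $R_{ab,cd}$ and whose initial vertex is $\substack{ab\\n+1\\cd}$, as one reads off from the zigzag bookkeeping of Remark~\ref{rem:valuesbc}. The decisive structural input is that this cube decomposes recursively, through the concatenation operations $({-})\times\{0,1\}$, $\{0,1\}\times({-})$, $({-})\times\{0\}$ and $\bigcup_{\epsilon}\{\epsilon\}\times({-})\times\{\epsilon\}$ building $Q(ab,cd)$, into Beck--Chevalley cubes of smaller bifactorization cubes $Q(a'b',c'd')$: the faces coming from the ``side'' coordinates introduced by the first three operations satisfy the Beck--Chevalley condition by Far-commutativity; the faces of the form $Q(a'b',c'd')$ with $a'\ne d'$ have vanishing defect by Defect vanishing; and the faces of the form $Q(jk,kj)$ have defect equal to the lower twist $T_{jk}$, by Definition~\ref{defi:highertwist} together with Recursiveness (which identifies them with bifactorization cubes of $\AA_{j+k-1}$-schobers obtained from $\X$ by restriction). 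Iterating fibers along the appropriate coordinate directions -- equivalently, applying the Koszul dictionary of Proposition~\ref{prop:koszul} between Waldhausen diagrams and exact complexes -- then collapses the Beck--Chevalley cube onto a coherent complex whose positive terms are indexed by the tuples $(i,j,k,l)$ with $i+j=a$, $k+l=b$, $i+k=c$, $j+l=d$. I expect this coherent collapse to be the main obstacle: the combinatorics is transparent (Example~\ref{exa:43-25}), but assembling the collapse with all higher homotopies is delicate, and the natural tool is the homotopy-coherent lax-matrix calculus of Remark~\ref{rem:laxmat}, which computes Beck--Chevalley defects as lax matrix products and is compatible with the concatenation operations used above.

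Next I would identify the graded pieces. For the tuple $(i,j,k,l)$ the corresponding positive term is an iterated fiber along a specific zigzag issuing from the initial vertex of the Beck--Chevalley cube; tracing the recursion shows that this zigzag factors as refinement edges realizing $\substack{ab\\ijkl}$ on the outside, a nested copy of the bifactorization cube $Q(jk,kj)$ in the middle, and coarsening edges realizing $\substack{ikjl\\cd}$ on the outside. By Recursiveness the nested $Q(jk,kj)$ is the bifactorization cube of an $\AA_{j+k-1}$-schober, so its defect is $T_{jk}$ (with the convention $T_{jk}=\id$ when $j=0$ or $k=0$, in which case the term degenerates to a pure zigzag functor, e.g. $\substack{43\\223\\25}$ for the tuple $(2,2,0,3)$ in Example~\ref{exa:43-25}); Far-commutativity of the outer edges past this middle block then puts the term into the form $\substack{ikjl\\cd}\circ(\id_i\otimes T_{jk}\otimes\id_l)\circ\substack{ab\\ijkl}$ of \eqref{eq:coherentbialgebra}. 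The ordering of the positive terms by the index $j$, and the shift $[p-1]$ on the $p$-th such term, are bookkeeping built into the staircase presentation of Waldhausen diagrams: the recursion produces the cube ``by increasing $j$'', and the Koszul functor of Proposition~\ref{prop:koszul} imposes exactly the shift $[p-1]$.

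Finally, exactness. When $a\ne d$ the total fiber of the Beck--Chevalley cube is $R_{ab,cd}$, which vanishes by Defect vanishing, so \eqref{eq:coherentbialgebra} is an exact complex. When $a=d$ one necessarily has $c=b$ (from $a+b=c+d$), the pair is $(ab,ba)$, and the total fiber of the Beck--Chevalley cube is the higher twist $T_{ab}$ by Definition~\ref{defi:highertwist}; unfolding in the same way produces the exact complex \eqref{eq:twistbialgebra}, whose final, shifted term records $T_{ab}$ as the tuple-$(0,a,b,0)$ piece $\id\circ T_{ab}\circ\id$. Canonicity of the entire construction is automatic, since every choice made -- the sections of trivial Kan fibrations in Constructions~\ref{construction:bc}, \ref{construction:inf-def} and \ref{construction:bccube}, and the equivalence of Proposition~\ref{prop:koszul} -- is unique up to contractible choice.
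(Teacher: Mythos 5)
Your proposal is correct and follows essentially the same route as the paper's own (very brief) proof: use Example~\ref{exa:43-25} as a blueprint, exploit the recursive structure of $Q(ab,cd)$ from Definition~\ref{defi:bifactorizationcube} to filter the Beck--Chevalley cube into a Waldhausen-type diagram whose graded pieces are identified via Recursiveness, Far-commutativity and the twist/defect conditions, and then apply Proposition~\ref{prop:koszul} to convert it into the exact complex \eqref{eq:coherentbialgebra}, with the $(ab,ba)$ case handled by the definition of $T_{ab}$ as the total fiber. Your treatment of the base case, the degenerate tuple $(0,a,b,0)$, and canonicity via contractible choices matches the paper's intent, and your flagged coherence issue is exactly what the paper defers to the lax-matrix calculus of Remark~\ref{rem:laxmat}.
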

\begin{proof}
    The proof of Example \ref{exa:43-25} provides a blueprint. The recursive
    nature (cf. Definition \ref{defi:bifactorizationcube}) of the
    bifactorization cube $Q(ab,cd)$ leads to a filtration of the
    Beck--Chevalley cube resulting in a Waldhausen diagram analogous to
    \eqref{eq:waldhausen}. An application of Proposition \ref{prop:koszul} then
    produces the desired exact complex \eqref{eq:coherentbialgebra}.
\end{proof}

\begin{rem}
    \label{rem:equivalent}
    The relations ``generated by'' the categorical bialgebra relations of Theorem
    \ref{thm:schoberbialgebra} (relations form a two-sided ideal $\sim$
    recursiveness!) together with far--commutativity relations are
    ``equivalent'' to our $\AA_n$-schober conditions from Definition
    \ref{defi:anschober}. In particular, the exact complexes of the form
    \eqref{eq:twistbialgebra} can be solved for $T_{ab}$ so that they provide a
    ``formula'' for the twists obtained from their participation in the
    bialgebra relations. This was our approach for finding another form of the
    bialgebra relations which can be formulated in immediate terms from the
    coherent composition diagram. 

    In contrast, it is {\em not} immediate to produce the coherent complexes
    \eqref{eq:coherentbialgebra} (specifically the differentials and coherent
    system of zero homotopies) directly from the composition cube $\X$. The
    proof of Theorem \ref{thm:schoberbialgebra} explains how to mediate between
    the Beck--Chevalley conditions from Definition \ref{defi:anschober} and the
    categorical bialgebra relations \eqref{eq:coherentbialgebra}, but it would
    be tricky to give a direct definition in terms of
    \eqref{eq:coherentbialgebra} without referring to the conditions of
    Definition \ref{defi:anschober}.
\end{rem}

\subsection{Framed \texorpdfstring{$\AA_n$}{An}-schobers}
\label{subsec:framed}

As already alluded to in Remark \ref{rem:spherical}, our concept of
$\AA_n$-schober only involves twist functors and not the cotwist functors that
are typically part of the definition of a spherical functor. We are now in a
position to explain the relevance of including cotwists. 

\begin{defi}
    \label{defi:cotwist}
    Let $n \ge 2$ and let 
    \[
        \X: \Comp(n) \to \Stab
    \]
    be a coherent diagram of exact functors of stable $\infty$-categories with
    right adjoints. 
    We define the {\em cotwist}
    \[
        T_{n}: \X_n \to \X_n
    \]
    as the total cofiber of the composite
    \begin{equation}
        \X_{0 \ldots 0}  \xrightarrow{i_!} 
        \Fun([1]^{n-1}, \X_{0\ldots 0})\xrightarrow{\inffun_{\X}}
        \Fun_{[1]^{n-1}}([1]^{n-1}, \X)  \xrightarrow{\deffun_{\X}}
        \Fun([1]^{n-1}, \X_{0 \ldots 0}).
    \end{equation}
    Where $i_!$ is relative left Kan extension along the inclusion $i$ of the
    initial vertex of $[1]^{n-1}$, and the functors $\inffun_{\X}$ and
    $\deffun_{\X}$ are the inflation and deflation functors introduced in 
    Definition \ref{construction:inf-def}.
    We call $T_{n}$ the {\em cotwist functor} in degree $n$. For $n = 1$, we
    set $T_{1} = \id$. 
\end{defi}

\begin{exa}
    \label{exa:cotwist}
    The cotwist $T_2$ is the cofiber of the unit 
    \[
        \substack{2} \to \substack{2\\11\\2}
    \]
    which, up to shift conventions, agrees with the cotwist for spherical functors. 
    The cotwist $T_3$ is the total cofiber of the square
    \[
    \begin{tikzcd}
        \substack{3} \ar{r}\ar{d} & \substack{3\\21\\3} \ar{d} \\ 
        \substack{3\\12\\3} \ar{r} & \substack{3\\111\\3}.
    \end{tikzcd}
    \]
\end{exa}

\begin{defi}
    \label{defi:highercotwist}
    An $\AA_n$-schober is called {\em framed} if, for every composition $n_1
    n_2 \ldots n_k$ of $n+1$, and every $1 \le i \le k$, the cotwist of the restriction of $\X$ along
    \[
        \Comp(n_i) \to \Comp(n+1),\; t \mapsto n_1 \ldots n_{i-1}\; t\; n_{i+1} \ldots n_k
    \]
    is an autoequivalence. Here, we include the trivial composition $n+1$, so
    that the cotwist $T_{n+1}$ is an equivalence. 
\end{defi}

\begin{rem}
    \label{rem:balancing}
    The Soergel schober defined in \S \ref{sec:soergelschober} is framed.
    Motivated by this example, we expect that the cotwist autoequivalences of a
    factorizing family $\X_{\bullet}$ of $\AA_n$-schobers should be
    interpreted as a {\em half-balancing} on the free braided monoidal
    $(\infty,2)$-category associated to $\X$; see
    \cite{MR2579396,enriquez2010halfbalancedbraidedmonoidalcategories,Bae} for
    related pre-categorified developments. Further, the categorical bialgbera object is required
    to satisfy certain compatibility contraints with respect to the
    half-balancing (cf. Example \ref{exa:framed}). In this work, we will not
    discuss framed $\AA_n$-schobers systematically, we plan to get back to this
    in future work.  
\end{rem}

\begin{exa}
    \label{exa:framed}
    A framed $\AA_1$-schober is simply a spherical adjunction 
    \[
        \substack{2\\11}: \X_{2} \longleftrightarrow \X_{11}: \substack{11\\2}
    \]
    in the usual sense where twist 
    \[
        T_{11} = \fib( \substack{11\\2\\11} \to \substack{11}): \X_{11} \to \X_{11}
    \]
    and cotwist
    \[
        T_{2} = \cofib( \substack{2} \to \substack{2\\11\\2}): \X_2 \to \X_2
    \]
    are equivalences. 
    We have a coherent diagram 
    \[
    \begin{tikzcd}
        0 \ar{r}\ar{d} & \substack{11\\2} \circ T_{11}  \ar{r}\ar{d} & 0 \ar{d}\\
        \substack{11\\2} \ar{r}\ar{d} &\ar{d}  \substack{11\\2\\11\\2} \ar{r} & \substack{11\\2} \ar{d}\\
        0 \ar{r} & T_{2} \circ \substack{11\\2} \ar{r} &  0
    \end{tikzcd}
    \]
    with all squares biCartesian and where the middle horizontal composite
    is the identity due to the snake relation of the adjunction. In
    particular, we obtain an equivalence (since the right-hand rectangle is
    biCartesian) 
    \begin{equation}
        \label{eq:balance2}
        \substack{11\\2} \circ T_{11}  \simeq T_{2} \circ \substack{11\\2}.
    \end{equation}
    Also taking into account $T_1 = \id$, this translates to the identity:
    \[
        T_2 \circ \mu^{11}_2 \simeq \mu^{11}_2 \circ T_{11} \circ T_1 \otimes T_1
    \]
\end{exa}

\subsection{Braid group actions from \texorpdfstring{$\AA_n$}{An}-schobers}

We begin by reformulating the definition of an $\AA_2$-schober from \S
\ref{subsec:a2schober} in terms of the notation introduced in Remark
\ref{rem:valuesbc}. It amounts to a coherent square of stable
$\infty$-categories parametrized by the composition poset $\Comp(3)$:
\[
\X: \begin{tikzcd}
    3 \ar{r}\ar{d}  & 21 \ar{d}\\
    12 \ar{r} & 111
\end{tikzcd} \quad \lra \quad \Stab
\]
such that
\begin{enumerate}
    \item The functor 
        \[
            T = \fib(\substack{111\\21\\111} \to \substack{111})
        \]
        is an autoequivalence $\X_{111} \to \X_{111}$. 
    \item The functor 
        \[
            S = \fib(\substack{111\\21\\111} \to \substack{111})
        \]
        is an autoequivalence $\X_{111} \to \X_{111}$. 
    \item The functors
        \[
            T_{21}: \fib(\substack{21\\3\\12} \to \substack{21\\111\\12})
        \]
        and 
        \[
            T_{12}: \fib(\substack{12\\3\\21} \to \substack{12\\111\\21})
        \]
        are equivalences $\X_{21} \to \X_{12}$ and $\X_{12} \to \X_{21}$, respectively.
    \item The squares 
    \[
                    \begin{tikzcd}
                        \substack{21\\3\\21} \ar{r}\ar{d} &\ar{d}  \substack{21\\111\\12\\111\\21} \\
                        \substack{21} \ar{r} &  \substack{21\\111\\21} 
                    \end{tikzcd} \quad\quad  \text{and} \quad\quad
                    \begin{tikzcd}
                        \substack{12\\3\\12} \ar{r}\ar{d} &\ar{d}  \substack{12\\111\\21\\111\\12} \\
                        \substack{12} \ar{r} &  \substack{12\\111\\12}
                    \end{tikzcd} 
    \]
    of functors $\X_{21} \to \X_{21}$ and $\X_{12} \to \X_{12}$, respectively, are biCartesian.
\end{enumerate}

\begin{thm}
    \label{thm:a2braid}
    Any $\AA_2$-schober defines a canonical coherent $\Br_3$-action on the underlying stable $\infty$-category $\X_{111}$. 
\end{thm}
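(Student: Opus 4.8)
\noindent The plan is to realize the action as a pointed map out of a classifying space of $\Br_3$, exploiting that this space is two-dimensional so that all coherence is encoded by the braid relation. By a coherent $\Br_3$-action on $\X_{111}$ I mean a functor $B\Br_3\to\Stab$ with value $\X_{111}$; as $\Br_3$ is a group, such a functor factors through $B\Aut(\X_{111})$. I would model $B\Br_3$ by the presentation complex of $\langle\sigma,\tau\mid\sigma\tau\sigma=\tau\sigma\tau\rangle$, which is aspherical (one-relator group, relator not a proper power) and hence a two-dimensional $K(\Br_3,1)$, written as the homotopy pushout $(S^1\vee S^1)\cup_{S^1}D^2$ along the map realizing the word $\sigma\tau\sigma\tau^{-1}\sigma^{-1}\tau^{-1}$. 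Passing to mapping spaces turns this pushout into a pullback, and one reads off that a pointed functor $B\Br_3\to B\Aut(\X_{111})$ is precisely the datum of two autoequivalences $T,S$ of $\X_{111}$ together with an equivalence $\eta\colon TST\xrightarrow{\ \sim\ }STS$ in $\Fun(\X_{111},\X_{111})$; no higher coherence data is required. (This two-dimensionality is special to $\Br_3$ and is what makes the coherent statement accessible; for $\Br_{n+1}$, $n\ge 3$, the classifying space carries cells up to dimension $n$, which is why a coherent lift is only conjectural in general.)

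\noindent For $T,S$ I would take the twist functors attached by conditions (A2.1)--(A2.2) to the $\AA_1$-schobers $F\colon\X_{21}\to\X_{111}$ and $G\colon\X_{12}\to\X_{111}$; these are autoequivalences by definition of an $\AA_2$-schober, and each is the Beck--Chevalley defect of a canonically constructed square (Example~\ref{exa:bcunit}), so the choice is canonical. The substance of the proof is then the construction of $\eta$. Its decategorified shadow is already present in Proposition~\ref{rem:decata2} and Remark~\ref{rem:braid}, where the identity $tst=sts$ was deduced from the decategorified forms of the biCartesian conditions (A2.5), (A2.6); the task is to lift that linear-algebra identity to a coherent equivalence of exact functors.

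\noindent To produce $\eta$ I would use the Beck--Chevalley equivalences $HI^*\simeq G^*F$, $IH^*\simeq F^*G$ of (A2.3)--(A2.4) together with $FI\simeq GH$ to rewrite the iterated composite computing $TST$ --- analogously to the total-fibre manipulations of Remark~\ref{rem:twistscommute}, but now invoking the $\AA_2$-conditions rather than commutativity --- as the total fibre of a cubical diagram built functorially from $\X$; the transposed rewriting exhibits $STS$ as the total fibre of the transposed cube. Both cubes are instances of the three-dimensional cube extending $\X$ from Example~\ref{exa:a2bc}, whose associated Beck--Chevalley square is exactly the square of (A2.5) (respectively (A2.6) for the transpose); biCartesianness of that square --- vanishing of the higher Beck--Chevalley defect --- then forces the comparison map $TST\to STS$ to be an equivalence. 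An alternative route assembles $\eta$ from Theorem~\ref{thm:schoberbialgebra}: the exact complexes \eqref{eq:twistbialgebra} for the composition pairs $(21,12)$ and $(12,21)$ present the higher twists $T_{21}$, $T_{12}$ as iterated extensions, which, spliced with the $\AA_1$-schober structure along the edges $\X_{21}\to\X_{111}$ and $\X_{12}\to\X_{111}$, produce the braid equivalence.

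\noindent I expect the main obstacle to be carrying out the construction of $\eta$ \emph{coherently}, i.e.\ propagating the homotopy-coherent data packaged in the biCartesian Beck--Chevalley squares and in the identifications (A2.3)--(A2.4) through the rewriting above, so that $\eta$ is a genuine equivalence in $\Fun(\X_{111},\X_{111})$ and not merely an isomorphism after passage to Grothendieck groups, which is all that Remark~\ref{rem:braid} provides. Once $\eta$ is available, the first step assembles $(T,S,\eta)$ into the coherent $\Br_3$-action with no remaining choices, and canonicity follows from that of the twist functors and of the defining biCartesian squares.
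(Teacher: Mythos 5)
Your reduction of the coherence problem is correct and even makes explicit something the paper leaves implicit: since the presentation $2$-complex of $\langle\sigma,\tau\mid\sigma\tau\sigma=\tau\sigma\tau\rangle$ is an aspherical model for $B\Br_3$, a coherent action is exactly two autoequivalences $T,S$ together with one specified equivalence $\eta\colon TST\simeq STS$, and your choice of $T,S$ as the twists attached to the edges $21\to 111$ and $12\to 111$ matches the paper. The gap lies in the only step carrying real content, the construction of $\eta$. You invoke ``the Beck--Chevalley equivalences $HI^*\simeq G^*F$, $IH^*\simeq F^*G$ of (A2.3)--(A2.4)'', but those conditions do not assert that the Beck--Chevalley maps are equivalences; they assert that their \emph{fibers} are equivalences of categories $\X_{21}\to\X_{12}$ and $\X_{12}\to\X_{21}$ (the higher twists $T_{21},T_{12}$). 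If the maps themselves were equivalences you would be in the $\AA_1\times\AA_1$ situation, and the rewriting you describe is exactly the manipulation of Remark~\ref{rem:twistscommute}, which yields $TS\simeq ST$ --- commutation, not the braid relation --- so this route, as stated, fails. Moreover, your claim that biCartesianness of the squares in \ref{a2:5}/\ref{a2:6} ``forces the comparison map $TST\to STS$ to be an equivalence'' presupposes a comparison map that is never constructed (and the cubes computing $TST$, $STS$ are cubes of endofunctors of $\X_{111}$, not instances of the cube of categories from Example~\ref{exa:a2bc}).

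What is actually needed, and what the paper does, is a common intermediary: exhibit $TST$ and $STS$ as total fibers of $3$-cubes of endofunctors built by composing the fiber sequences defining $T$ and $S$; whisker the biCartesian square of \ref{a2:5} by the edge functors between $\X_{111}$ and $\X_{21}$ and paste it onto the $TST$-cube as a prism; by two-out-of-three for total fibers and a right Kan extension, $TST$ is identified with the total fiber of a globular diagram whose source is the composite through $\X_3$, namely $\substack{111\\3\\111}$; the symmetric argument with \ref{a2:6} identifies $STS$ with the total fiber of the \emph{same} diagram, and only this common identification produces $\eta$. Neither the prism pasting nor the appearance of the vertex $3$ occurs in your sketch, and your alternative via Theorem~\ref{thm:schoberbialgebra} (``spliced with the $\AA_1$-schober structure'') is only a gesture, as you concede in your last paragraph. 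So: correct framing and correct generators, but the heart of the proof --- the coherent equivalence $TST\simeq STS$ --- is missing, and the one concrete mechanism you propose rests on a misreading of (A2.3)--(A2.4).
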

\begin{proof}
    We denote 
    \[
        T = \fib(\substack{111\\21\\111} \to \substack{111})
    \]
    and 
    \[
        S = \fib(\substack{111\\12\\111} \to \substack{111}).
    \]
    Then $TST$ and $STS$ are given by the total fibers of the cubes
    \[
    \begin{tikzcd}[sep=tiny]
    \substack{111\\21\\111\\12\\111\\21\\111} \ar{rr}\ar{dd}\ar{dr} &  & \substack{111\\12\\111\\21\\111} \ar{dd}\ar{dr} & \\
                                               & \substack{111\\21\\111\\21\\111}\ar[crossing over]{rr} & & \substack{111\\21\\111}\ar{dd}  \\
    \substack{111\\21\\111\\12\\111} \ar{rr}\ar{dr} &  & \substack{111\\12\\111}\ar{dr} & \\
                                                               & \substack{111\\21\\111}\ar{rr}\ar[from=uu, crossing over]  & & \substack{111}
    \end{tikzcd}\quad\quad \text{and} \quad\quad
    \begin{tikzcd}[sep=tiny]
    \substack{111\\12\\111\\21\\111\\12\\111} \ar{rr}\ar{dd}\ar{dr} &  & \substack{111\\21\\111\\12\\111} \ar{dd}\ar{dr} & \\
                                               & \substack{111\\12\\111\\12\\111}\ar[crossing over]{rr} & & \substack{111\\12\\111}\ar{dd}  \\
    \substack{111\\12\\111\\21\\111} \ar{rr}\ar{dr} &  & \substack{111\\21\\111}\ar{dr} & \\
                                                               & \substack{111\\12\\111}\ar{rr}\ar[from=uu, crossing over]  & & \substack{111}
    \end{tikzcd} \text{, respectively.}
    \]
    The $\AA_2$-schober conditions \ref{a2:5} and \ref{a2:6} translate, using
    our current terminology from Remark \ref{rem:valuesbc} to the biCartesian
    squares
    \[
                    \begin{tikzcd}
                        \substack{21\\3\\21} \ar{r}\ar{d} &\ar{d}  \substack{21\\111\\12\\111\\21} \\
                        \substack{21} \ar{r} &  \substack{21\\111\\21} 
                    \end{tikzcd} \quad\quad  \text{and} \quad\quad
                    \begin{tikzcd}
                        \substack{12\\3\\12} \ar{r}\ar{d} &\ar{d}  \substack{12\\111\\21\\111\\12} \\
                        \substack{12} \ar{r} &  \substack{12\\111\\12}. 
                    \end{tikzcd} 
    \]
    Using the first of these squares whiskered by $111 \to 21$ and $21 \to 111$, we may combine it with the square totalizing to $TST$ to obtain the prism
    \[
    \begin{tikzcd}[sep=tiny]
        \substack{111\\3\\111}\ar{rr}\ar{dr}\ar{dd}& &\substack{111\\21\\111\\12\\111\\21\\111} \ar{rr}\ar{dd}\ar{dr} &  & \substack{111\\12\\111\\21\\111} \ar{dd}\ar{dr} & \\
                                                   & \substack{111\\21\\111}\ar{rr}\ar{dd} &        & \substack{111\\21\\111\\21\\111}\ar[crossing over]{rr} & & \substack{111\\21\\111}\ar{dd}  \\
        \substack{111\\21\\111\\12\\111}\ar{dr}\ar{rr}   & & \substack{111\\21\\111\\12\\111} \ar{rr}\ar{dr} &  & \substack{111\\12\\111}\ar{dr} & \\
                                                         &\substack{111\\21\\111}\ar{rr} &                           & \substack{111\\21\\111}\ar{rr}\ar[from=uu, crossing over]  & & \substack{111}
    \end{tikzcd}
    \]
    where the left cube is biCartesian since its top and bottom face are. Thus,
    by $2/3$ for cubes we obtain an equivalence between the total fiber of the
    outer prism and the right hand side cube, which is $TST$. 
    Now the total fiber of this cube
    \[
    \begin{tikzcd}[sep=tiny]
        \substack{111\\3\\111}\ar{rr}\ar{dr}\ar{dd}&  & \substack{111\\12\\111\\21\\111} \ar{dd}\ar{dr} & \\
                                                   & \substack{111\\21\\111}\ar[crossing over]{rr}\ar{dd} &    & \substack{111\\21\\111}\ar{dd}  \\
        \substack{111\\21\\111\\12\\111}\ar{dr}\ar{rr}   &   & \substack{111\\12\\111}\ar{dr} & \\
                                                         &\substack{111\\21\\111}\ar{rr} &      & \substack{111}
    \end{tikzcd}
    \]
    can, via right Kan extension, be canonically identified with the total fiber of the globular diagram
    \[
    \begin{tikzcd}
        & \substack{111\\12\\111\\21\\111} \ar{r}\ar{ddr}& \substack{111\\21\\111} \ar{dr}& \\
        \substack{111\\3\\111} \ar{ur}\ar{dr} & &  & \substack{111}\\
                                              &\substack{111\\21\\111\\12\\111}\ar{r}\ar{uur}& \substack{111\\12\\111}\ar{ur} & .
    \end{tikzcd}
    \]
    A similar argument leads to a canonical identification of $STS$ with the
    total fiber of the same diagram, thus producing a preferred identification $TST = STS$. 
\end{proof}

We note that this proof extracts the essence of a pattern of similar
arguments concerning the invariance of link homology constructions under the
third Reidemeister move, see e.g. \cite[Figures 43 and 44]{Kho3} or \cite[Figures 53 and 54]{KR}.

\begin{cor}
    \label{cor:anbraid}
    Any $\AA_n$-schober $\X$ defines a canonical $\Br_{n+1}$-action up to homotopy on
    the underlying stable $\infty$-category $\X_{11 \cdots 1}$. 
\end{cor}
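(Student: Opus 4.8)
The plan is to reduce the statement to the standard Artin presentation
\[
    \Br_{n+1} = \langle \sigma_1, \dots, \sigma_n \mid \sigma_i\sigma_{i+1}\sigma_i = \sigma_{i+1}\sigma_i\sigma_{i+1},\ \sigma_i\sigma_j = \sigma_j\sigma_i \text{ for } |i-j| \ge 2\rangle
\]
by producing autoequivalences $T_1,\dots,T_n$ of $\X_{11\cdots 1}$ (with $n+1$ ones) that satisfy these relations up to equivalence; the assignment $\sigma_i \mapsto [T_i]$ then extends to a group homomorphism from $\Br_{n+1}$ to $\pi_0$ of the space of autoequivalences of $\X_{11\cdots 1}$, which is the asserted action up to homotopy. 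For $n=1$ this is merely the invertibility of the twist of an $\AA_1$-schober ($\Br_2\cong\Z$ acts by its powers), and for $n=2$ it is a weakening of Theorem \ref{thm:a2braid}, so I would assume $n\ge 3$. To define $T_i$ for $1\le i\le n$, I would apply Recursiveness to the proper composition $(1^{i-1},2,1^{n-i})$ of $n+1$: the restriction of $\X$ along $\Comp(2)\to\Comp(n+1)$, $t\mapsto(1^{i-1},t,1^{n-i})$, is an $\AA_1$-schober, and I take $T_i$ to be its twist functor (Definition \ref{defi:a1schober}), an autoequivalence of $\X_{11\cdots 1}$ by (A1.2).

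For the braid relation between $\sigma_i$ and $\sigma_{i+1}$, I would restrict $\X$ along $\Comp(3)\to\Comp(n+1)$, $t\mapsto(1^{i-1},t,1^{n-i-1})$. By Recursiveness (the composition $(1^{i-1},3,1^{n-i-1})$ is proper since $n\ge 3$) this restriction is an $\AA_2$-schober, and under the identification $\Comp(3)\cong\{3 < 21,\,12 < 111\}$ the two Artin generators of the $\Br_3$-action furnished by Theorem \ref{thm:a2braid} are exactly $T_i$ (attached to the coarsening $21$, i.e.\ to merging the $i$-th and $(i{+}1)$-st parts) and $T_{i+1}$ (attached to $12$). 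Theorem \ref{thm:a2braid} therefore delivers an equivalence $T_i T_{i+1} T_i \simeq T_{i+1} T_i T_{i+1}$.

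For far-commutativity, fix $i<j$ with $j\ge i+2$ and restrict $\X$ to the sub-square of $\Comp(n+1)$ recording the two $\AA_1$-schobers at parts $(i,i{+}1)$ and $(j,j{+}1)$, namely
\[
\begin{tikzcd}
\X_{\lambda} \ar{r}\ar{d} & \X_{\mu_j}\ar{d}\\
\X_{\mu_i} \ar{r} & \X_{11\cdots 1}
\end{tikzcd}
\]
where $\lambda$ has a $2$ in both parts and $\mu_i,\mu_j$ have a single $2$ in part $(i,i{+}1)$, resp.\ $(j,j{+}1)$. All four functors admit right adjoints (Adjunctability), the edges $\X_{\mu_i},\X_{\mu_j}\to\X_{11\cdots 1}$ are $\AA_1$-schobers (Recursiveness), and, taking the splitting $a=i{+}1$, $b=n{-}i$ of $n+1$, this square and its transpose are precisely of the kind covered by the Far-commutativity axiom, hence Beck--Chevalley. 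Thus it is an $\AA_1\times\AA_1$-schober in the sense of Definition \ref{defi:a1a1schober}, and Remark \ref{rem:twistscommute} gives $T_i T_j\simeq T_j T_i$. Combining the last two paragraphs, $\sigma_i\mapsto[T_i]$ respects every relation of the presentation, which finishes the argument.

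I expect the only real subtlety to be bookkeeping: one must check that each $T_i$ is ``the same'' functor regardless of which restricted schober it is extracted from --- as an $\AA_1$-twist, as a generator in Theorem \ref{thm:a2braid}, as a commuting twist in Remark \ref{rem:twistscommute} --- which holds because in every case it is the twist of the \emph{same} restriction of $\X$ along the standard inclusion $\Comp(2)\hookrightarrow\Comp(n+1)$ at parts $(i,i{+}1)$, and restriction of coherent diagrams is functorial; one also has to record the easy edge cases $n\le 2$. The genuinely hard problem --- upgrading this homotopy-level action to a coherent one --- is not addressed and is left as Conjecture \ref{conj:anbraidcoherent}.
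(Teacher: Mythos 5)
Your proposal is correct and is essentially the paper's argument: the paper proves the corollary in one line as "a direct consequence of Theorem \ref{thm:a2braid} and Remark \ref{rem:twistscommute}", and your write-up just makes explicit the intended bookkeeping — defining each generator $T_i$ as the twist of the restricted $\AA_1$-schober at parts $(i,i{+}1)$, invoking Recursiveness plus Theorem \ref{thm:a2braid} for the braid relations of adjacent generators, and Adjunctability, Recursiveness and Far-commutativity plus Remark \ref{rem:twistscommute} for far-commutativity. The edge cases $n\le 2$ and the identification of the various incarnations of $T_i$ are handled as the paper implicitly assumes, so nothing is missing.
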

\begin{proof}
    This is a direct consequence of Theorem \ref{thm:a2braid} and Remark \ref{rem:twistscommute}.
\end{proof}

Of course, the datum of a local system of $\infty$-categories on the open
stratum of $\Sym^{n+1}(\CC)$ corresponds to a coherent braid group action. We expect
that this can indeed be obtained, but it involves a much more elaborate
construction which we plan to get back to in future work. For now, we leave it
as a conjecture. 

\begin{conj}
    \label{conj:anbraidcoherent} Any $\AA_n$-schober $\X$ defines a canonical
    coherent $\Br_{n+1}$-action on the underlying stable $\infty$-category $\X_{11
    \cdots 1}$. 
\end{conj}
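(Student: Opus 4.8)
The plan is to obtain the $\Br_{n+1}$-action as the monodromy of a locally constant sheaf of stable $\infty$-categories on the open stratum $U := \mathrm{Conf}_{n+1}(\CC)/S_{n+1} \subset \Sym^{n+1}(\CC)$. Since $U$ is a $K(\Br_{n+1},1)$, a coherent $\Br_{n+1}$-action on $\X_{11\cdots1}$ is precisely a functor $\mathrm{Sing}(U) \simeq B\Br_{n+1} \to \Stab$ with value $\X_{11\cdots1}$, so it suffices to produce such a sheaf. I would do this by first realising the $\AA_n$-schober $\X$ itself as a constructible sheaf $\widetilde{\X}$ of stable $\infty$-categories on all of $\Sym^{n+1}(\CC)$, constructible for the discriminant stratification --- a genuine ``$\infty$-categorical perverse schober'' --- and then restricting along $U\hookrightarrow\Sym^{n+1}(\CC)$; the restriction $\widetilde{\X}|_U$ is automatically locally constant because it is $\Stab$-valued and $U$ is a single stratum.

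The first step is the crux: promote the informal equivalence of Remark~\ref{rem:equivalent} to a precise $\infty$-categorical statement. Following \cite{2102.13321}, I would build a homotopy-coherent enhancement $\BS_n^{\Catinfty}$ of the Kapranov--Schechtman category $\BS_n$ --- the degree-$(n+1)$ part of an $\infty$-categorical version of the braided monoidal category freely generated by a universal graded bialgebra --- and prove that evaluation on objects yields an equivalence between the $\infty$-category of $\AA_n$-schobers and $\Fun^{\mathrm{ex}}(\BS_n^{\Catinfty}, \Stab)$. The point is that the five axioms of Definition~\ref{defi:anschober}, reorganised through the higher Beck--Chevalley cubes of \S\ref{subsec:higherbc}, encode exactly a generators-and-relations presentation of $\BS_n^{\Catinfty}$: Recursiveness and Far-commutativity furnish the braided monoidal and far-commutativity structure, while Twist invertibility and Defect vanishing, via Theorem~\ref{thm:schoberbialgebra}, supply fully coherent witnesses for the bialgebra relations \eqref{eq:coherentbialgebra}. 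I would then identify $\Fun^{\mathrm{ex}}(\BS_n^{\Catinfty},\Stab)$ with constructible $\Stab$-valued sheaves on $\Sym^{n+1}(\CC)$ for the discriminant stratification, by computing the exit-path $\infty$-category of this stratified space and matching its presentation with that of $\BS_n^{\Catinfty}$ --- the $\infty$-categorical analogue of Theorem~\ref{thm:KS}. Restricting $\widetilde{\X}$ to $U$ and invoking $\mathrm{Sing}(U)\simeq B\Br_{n+1}$ then gives the coherent action; a concluding naturality check --- tracing the Artin generators through the Beck--Chevalley data --- confirms that it lifts the homotopy-level $\Br_{n+1}$-action of Corollary~\ref{cor:anbraid}.

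A more hands-on variant, bypassing the abstract category $\BS_n^{\Catinfty}$, is to fix a CW model of $U$ with cells indexed by compositions of $n+1$ (a Salvetti- or Fox--Neuwirth-type complex for the type-$\AA_n$ braid arrangement) and to assemble the twist functors $T_{ab}$ directly into a coherent cellular local system over the associated exit category. In this picture the Waldhausen diagrams of \S\ref{sec:grbialg} provide the $2$-cells, and the homotopy-coherent lax-matrix calculus of \cite{cdw:lax,rush:thesis} (cf.\ Remark~\ref{rem:laxmat}) should be the right device for organising all the higher cells.

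I expect the entire difficulty to concentrate in the first step (equivalently, in the higher-cell bookkeeping of the variant): the schober axioms impose only finitely many Beck--Chevalley-cube conditions, and one must show that these generate \emph{all} the higher coherences required for a functor out of $\BS_n^{\Catinfty}$ --- that the $\S$-construction cells of \S\ref{sec:grbialg} bootstrap to a genuinely homotopy-coherent solution of the bialgebra relations, with no secondary obstructions. Carrying this out, along with constructing $\BS_n^{\Catinfty}$ itself --- a building block of the conjectural ``colored 2PROB'' of the introduction --- is what would have to be deferred, and is presumably why the statement is left as Conjecture~\ref{conj:anbraidcoherent}.
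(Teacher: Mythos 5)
There is a genuine gap here, and it is worth being clear that the paper itself does not contain a proof of this statement: it is deliberately left as a conjecture, with the authors noting that upgrading the homotopy-level action of Corollary~\ref{cor:anbraid} to a local system of $\infty$-categories on the open stratum ``involves a much more elaborate construction'' deferred to future work. Your outline reproduces the expected strategy (a coherent action is the same as a functor $B\Br_{n+1}\simeq \mathrm{Sing}(U)\to\Stab$, to be obtained by restricting a constructible $\Stab$-valued sheaf on $\Sym^{n+1}(\CC)$ to the open stratum), but the entire mathematical content sits in the step you yourself flag as ``the crux'': the claim that the five conditions of Definition~\ref{defi:anschober}, phrased through finitely many Beck--Chevalley cubes, assemble into a homotopy-coherent presentation $\BS_n^{\Catinfty}$ with \emph{no secondary obstructions}, together with an $\infty$-categorical exit-path analogue of Theorem~\ref{thm:KS}. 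Neither of these is established anywhere: Theorem~\ref{thm:KS} is a statement about abelian-category-valued perverse sheaves, and the paper's Remark~\ref{rem:equivalent} explicitly warns that even producing the coherent complexes \eqref{eq:coherentbialgebra} directly from the composition cube is ``not immediate'' and ``tricky'' --- so asserting that Theorem~\ref{thm:schoberbialgebra} supplies ``fully coherent witnesses'' for all higher bialgebra coherences is precisely the unproved point, not a consequence of what is in the paper.

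Concretely, the failure mode is this: the schober axioms are conditions (vanishing of certain defects, invertibility of certain twists) rather than structure, and passing from ``these finitely many totalizations vanish'' to ``there is a coherent diagram over the exit-path $\infty$-category of the discriminant stratification'' requires either a rectification/obstruction-theory argument or an explicit corepresenting object (your $\BS_n^{\Catinfty}$, the ``colored 2PROB'' of the introduction), whose construction is itself an open problem stated in the paper only as an expectation. Your cellular Salvetti/Fox--Neuwirth variant has the same issue: the Waldhausen cells of \S\ref{sec:grbialg} give the $2$-cells, but the coherence data for cells of all higher dimensions is exactly what is missing, and the lax-matrix calculus of Remark~\ref{rem:laxmat} is only suggested in the paper as a likely tool, not deployed. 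So your proposal is a reasonable research plan consistent with the authors' stated intentions, but it is not a proof, and it could not be graded as one: the conjecture remains open both in the paper and in your write-up.
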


\subsection{Example: \texorpdfstring{$\AA_n$}{An}-configurations of spherical objects}
\label{sec:sphericalconf}

We provide a first class of examples of $\AA_n$-schobers: namely
$\AA_n$-configurations of spherical objects as introduced by Seidel and Thomas.
To connect to the classical context of spherical objects let $k$ be a field and
let $\Modk$ be the monoidal $\infty$-category of $k$-module spectra. Let $\D$
be a presentable $k$-linear stable $\infty$-category, i.e., a stable
$\infty$-category left tensored over $\Modk$. In this setup, any compact
object $E$ of $\D$ determines an adjunction
\[
    - \otimes E : \Modk \longleftrightarrow \D : \Hom(E,-)
\]
of $k$-linear functors. The twist functor is then given by
\[
    T_E: \D \to \D,\; X \mapsto \fib(\Hom(E,X) \otimes E \to X).
\]

\begin{exa}
    \label{exa:A1A1}
    Let $E,E'$ be compact objects of $\D$. Then the diagram 
    \[
    \begin{tikzcd}
        0 \ar{r} \ar{d}& \Modk \ar{d}{-\otimes E'} \\
        \Modk \ar{r}[swap]{-\otimes E} & \D 
    \end{tikzcd}
    \]
    is an $\AA_1 \times \AA_1$-schober if and only if
    \begin{enumerate}
        \item The twist functors $T_E$ and $T_{E'}$ are autoequivalences. 
        \item $\Hom(E,E') = 0$ and $\Hom(E',E) = 0$. 
    \end{enumerate}
    In particular, any pair of orthogonal spherical objects $E,E'$ defines an $\AA_1 \times \AA_1$-schober. 
\end{exa}

\begin{exa}
    \label{exa:A2}
    Let $E,E'$ be compact objects of $\D$. Then the diagram 
    \begin{equation}
        \label{eq:a2schob}
    \begin{tikzcd}
        0 \ar{r} \ar{d}& \Modk \ar{d}{-\otimes E'} \\
        \Modk \ar[swap]{r}{-\otimes E} & \D 
    \end{tikzcd}
    \end{equation}
    is an $\AA_2$-schober if and only if
    \begin{enumerate}
        \item The twist functors $T_E$ and $T_{E'}$ are autoequivalences. 
        \item\label{en:2} The functors
            \[
                \Modk \to \Modk, X \mapsto \Hom(E,E') \otimes X
            \]
            and
            \[
                \Modk \to \Modk, X \mapsto \Hom(E',E) \otimes X
            \]
            are equivalences. This implies that $\Hom(E.E') \simeq k[m]$ and
            $\Hom(E',E) \simeq k[n]$. 
        \item The maps
            \[
                k \oplus (\Hom(E,E') \otimes \Hom(E',E)) \lra \Hom(E,E), (\lambda, (f,g)) \mapsto \lambda \id + gf
            \]
            and
            \[
                k \oplus (\Hom(E',E) \otimes \Hom(E,E')) \lra \Hom(E',E'), (\lambda, (f,g)) \mapsto \lambda \id + gf
            \]
            are equivalences so that, together with \ref{en:2}, we obtain
            \[\Hom(E,E) \simeq k \oplus k[n+m] \quad \text{and} \quad \Hom(E',E') \simeq k \oplus
            k[n+m].\]
    \end{enumerate}
    In particular, any $\AA_2$-configuration of spherical objects $E$ and $E'$
    in the sense of Seidel-Thomas defines an $\AA_2$-schober. In fact, if we
    also force the dual $\AA_2$-schober conditions, then $\AA_2$-schobers of
    the form \eqref{eq:a2schob} correspond precisely to $\AA_2$-configurations of spherical objects. 
\end{exa}

\begin{exa}
    \label{exa:ancollection}
    Combining Example \ref{exa:A1A1} and Example \ref{exa:A2}, we see that
    $\AA_n$-schobers $\X$ with values in $k$-linear stable $\infty$-categories
    such that $\X(11 \dots1) = \D$, $\X(1 \ldots 121 \ldots 1) = \Modk$, and
    $\X(m_1 \ldots m_k) = 0$ else, correspond to $\AA_n$-collections of
    spherical objects in $\D$.
    In particular, Corollary \ref{cor:anbraid} recovers the celebrated main
    result of Seidel-Thomas stating that any $\AA_n$-collection of spherical
    objects in $\D$ defines an action of $\Br_{n+1}$ on $\D$.
\end{exa}

\begin{rem}
    \label{rem:e2}
    It is formal to replace the field $k$ in the above examples by an
    $\EE_2$-algebra. In particular, one could set $k$ to be the sphere
    spectrum, to obtain $\Mod_k \simeq \Sp$. 
\end{rem}

\section{Factorizing families of \texorpdfstring{$\AA_n$}{An}-schobers}
\label{sec:monoidal}

The classification data for perverse sheaves on $\Sym^n(\CC)$ according to
\cite{2102.13321} correspond to $\NN$-graded bialgebra relations seen within a
fixed homogeneous degree $n$. Of course, this strongly suggests that one ought
to combine all symmetric products to obtain an actual $\NN$-graded bialgebra
given as a sequence of perverse sheaves on $\{\Sym^n(\CC),\; n \ge 0\}$
equipped with suitable identification data connecting the sheaves on the
various symmetric products. In \cite{2102.13321}, this is realized in terms
of a suitably defined PROB. In our stably categorified context, this turns out to be simpler,
since multiplication and comultiplication are adjoint to one another so that we
only need to specify one of them. 

We simply have to note that the disjoint union of all composition posets
\[
    \Comp := \amalg_{n \in \NN} \Comp(n)
\]
forms a (strict) monoidal category where the tensor product is given by
concatenation of compositions, i.e.
\[
    (n_1 \ldots n_k) \otimes (m_1 \ldots m_l) := (n_1 \ldots n_k m_1 \ldots m_l).
\]
Here, we include the empty composition $\Comp(0) = \{ \emptyset \}$ as the
unit. Further, we equip the $\infty$-category of presentable stable
$k$-linear $\infty$-categories $\Pr^{\on{st}}_k$ where $k$ is an
$\EE_{\infty}$-algebra (for the examples in this paper, $k$ will be a field)
with Lurie's tensor product (\cite[5.1.3]{lurie:ha}) making it a symmetric
monoidal $\infty$-category. 

\begin{defi}
    \label{defi:multschober}
    A {\em factorizing family of $k$-linear $\AA_n$-schobers} is a monoidal functor
    \[
        \X_{\bullet}: \Comp \lra \Pr^{\on{st}}_k
    \]
    such that, for every $n \ge 2$, the restriction $\X_{n+1}$ of
    $\X_{\bullet}$ to $\Comp(n+1)$ is an $\AA_n$-schober.
\end{defi}

\begin{rem}
    \label{rem:simplefact}
    Note that it follows immediately from the definitions that, for a
    factorizing family $\X_{\bullet}$ of $\AA_n$-schobers, the conditions
    (Recursiveness) and (Far-commutativity) in Definition \ref{defi:anschober}
    are automatic as they are induced from lower--dimensional schober
    conditions via the monoidality of the functor $\X_{\bullet}$ (using the
    exactness of the tensor product on $\Pr^{\on{st}}_k$). 
\end{rem}

\section{Soergel schobers}
\label{sec:soergelschober}

\subsection{Soergel cubes}
Consider the standard presentation of $S_{n}$ as a Coxeter group and the set of parabolic subgroups, partially ordered by inclusion. This defines a coherent diagram
\begin{equation}\label{eq:parabolics}
\X^{\Par}\colon \; \Comp(n)^\op \to \Grp 
\end{equation}
mapping a composition $n_1n_2\dots n_k$ of $n$ to the parabolic subgroup 
\[S_{n_1n_2\dots n_k}:=S_{n_1}\times S_{n_2}\times \cdots \times S_{n_k}\subset S_{n}.\]

\begin{definition}
    A (commutative) Frobenius extension is an inclusion of unital rings
    $\iota\colon A\hookrightarrow B$, such that $B$ is free and
    finitely-generated as an $A$-module, together with the data of an $A$-linear
    \emph{trace} $\del=\del_{A}^B\colon B \to A$ that is \emph{non-degenerate}
    in the sense that $B$ can be equipped with a pair of $A$-linear bases
    $\{b_i\}$ and $\{b^*_i\}$, such that $\del(b_i b^*_j)=\delta_{i,j}$. 
\end{definition}

It is straightforward to check that every ring admits a unique identity
Frobenius extension and that the composition of Frobenius extensions
$A\hookrightarrow B$ and $B\hookrightarrow C$ yields a Frobenius extension
$A\hookrightarrow C$ with $\del_{A}^C=\del_{A}^B\del_{B}^C$. We denote by
$\FrobExt$ the category with objects given by unital rings and morphisms by
Frobenius extensions.

Cubical diagrams of Frobenius extensions were systematically studied in
\cite{ESW}. We consider the following example, see \cite[Example 1.5]{ESW}, and will throughout work over the rationals $\Q$ for concreteness, although most aspects work in significantly greater generality.
\begin{construction}
    For $n\geq 0$ we consider the graded polynomial ring $R:=\Q[x_1,\dots,x_{n}]$ with
    all variables $x_i$ of degree two. The symmetric group $S_{n}$ acts on $R$
    by permutation of variables. By passing to rings of invariants under parabolic
    subgroups
    \[
    R_{n_1n_2\dots n_k}:= R^{S_{n_1n_2\dots n_k}},
    \]
    the cube of parabolic subgroups $\X^{\Par}$ from \eqref{eq:parabolics} yields a coherent diagram
    \begin{equation}\label{eq:soergelcube}
\X^{\Frob}\colon \; \Comp(n) \to \FrobExt
    \end{equation}
of Frobenius extensions of graded $\Q$-algebras, the \emph{Soergel cube} of
$S_{n}$. 

We now describe the requisite system of Frobenius traces explicitly. For
parabolic subgroups $I\subset J \subset S_{n}$ we write $w_I$ and $w_J$ for
the corresponding longest elements and $\ell(w_I),\ell(w_J)$ for the respective
lengths. Associated to each simple transposition $s=(i,i+1)$ we have the
Demazure operator $D_{s}\colon R\to R$ acting on $p \in R$ by 
    \[ D_{s}(p) = \frac{p - s(p)}{x_i - x_{i+1}}.\] By \cite{MR342522}, these
    operators satisfy the type $A$ braid relations as well as $D_s^2=0$. Any
    choice of reduced expression $w_Jw_I^{-1}=s_1\cdots s_k$ in terms of simple
    transpositions $s_i\in S_{n}$ will thus define the same endomorphism
\[ D_I^J = \del_{s_1}\circ\cdots\circ\del_{s_k} \] of $R$ which, furthermore,
restricts to a non-degenerate trace $\del_I^J\colon R^I\to R^J$ of degree
$2\ell(w_I)-2\ell(w_J)$. The coherence of the system of traces $\Soe(S_{n})$
follows from a similar argument.
\end{construction}

\begin{rem}
    \label{rem:cohflags}
    The Soergel cubes $\X^{\Frob}$ are a convenient way to package various maps
    obtained in $\mathrm{GL}$-equivariant cohomology by pullback and pushforward
    along forgetful maps between partial flag varieties, with Frobenius traces
    appearing as instances of Poincar\'e duality, see \cite{elias2024demazureoperatorsdoublecosets}. 
\end{rem}

Recall the Morita bicategory $\Mor$, whose objects are unital rings,
$\Hom_{\Mor}(A,B)$ is the category of finitely-generated $(B,A)$-bimodules and
horizontal composition is given by relative tensor product. We write $\Mor^\Z$
for the analogous bicategory of $\Z$-graded unital rings, finitely-generated
graded $(B,A)$-bimodules, and grading-preserving bimodule homomorphisms. 

For any inclusion $\iota\colon A \to B$ of unital rings, we consider the
induction bimodule ${}_{B}B_A$ and the restriction bimodule ${}_{A}B_B$ as 1-morphisms in $\Mor$ between which we have the adjunction
\[
    {}_{B}B_A\dashv
    {}_{A}B_B
\]
with counit and unit induced by the multiplication and unit of $B$ respectively. The data of a Frobenius extension upgrades this to an ambidextrous adjunction \cite{MR190183}
\[
    {}_{B}B_A\dashv
    {}_{A}B_B\dashv
    {}_{B}B_A
\]
with the counit of the new adjunction induced by the Frobenius trace $\del_A^B$. In the $\Z$-graded setting with a trace homogeneous of degree $-d$, the double right adjoint of ${}_{B}B_A$ in $\Mor^\Z$ is isomorphic up to shift, namely $\qdeg^{-d}{}_{B}B_A $.

\begin{construction}
    The Soergel cube \eqref{eq:soergelcube} gives rise to a coherent diagram
    \begin{equation}\label{eq:moritacube}
        \X^{\Mor}\colon \; \Comp(n) \to \Mor^\Z
            \end{equation}
            in which every bimodule has a right adjoint. Moreover, for every
            composition $ab$ of $n$ and compositions $c_0 \le c_1$ and $d_0
            \le d_1$ of $a$ and $b$, respectively, the square
            \[
            \begin{tikzcd}
                \X^{\Mor}_{c_0 d_0} \ar{r}\ar{d} & \ar{d}\X^{\Mor}_{c_0 d_1}\\
                \X^{\Mor}_{c_1 d_0} \ar{r} & \X^{\Mor}_{c_1 d_1}
            \end{tikzcd}
            \]
            and its transpose satisfy the Beck--Chevalley condition, simply
            because the relevant bimodules are canonically isomorphic. 

            Next, we can lift this
            diagram from the world of abelian categories of graded bimodules to
            the corresponding derived $\infty$-categories. These then act by
            derived tensor product on appropriate stable $\infty$-categories of
            graded modules over partially symmetric polynomial rings. In
            summary, we obtain a coherent diagram      
            \begin{equation}\label{eq:stablecube}
                \X_{n}\colon \; \Comp(n) \to \mathrm{st}_{\Q}^{B\Z}
            \end{equation}
         in the $\infty$-category $\mathrm{st}_{\Q}^{B\Z}$ of small stable
         idempotent-complete $\Q$-linear $\infty$-categories, equipped with a
         compatible $\Z$-action, see \cite[\S4]{LMGRSW}. The Beck-Chevalley
         conditions on squares parametrized by compositions $c_0 \le c_1$ and
         $d_0 \le d_1$ of $a$ and $b$ with $a+b=n$ is inherited from \eqref{eq:moritacube}.
        \end{construction}

        \begin{defi}
            The 2-category of \emph{singular Bott--Samelson bimodules} $\sBSBim_{n}$ is the sub
            2-category of $\Mor^\Z$ whose objects are the graded $\Q$-algebras
            on the vertices of the Soergel cube \eqref{eq:moritacube} and whose
            $\Hom$-categories are full subcategories of graded bimodules
            generated under grading shift and horizontal composition by bimodules
            on the edges\footnote{Restricting to edges makes the horizontal
            composition strictly associative.} of the Soergel cube. Its hom-wise idempotent completion defines the 2-category of \emph{singular Soergel bimodules} $\sSBim_{n}$.
        \end{defi}

        \begin{rem}
          The Morita bicategories $\Mor$ and $\Mor^\Z$ are prototypical examples
          of monoidal bicategories with the monoidal structure $\boxtimes$ induced on
          objects and 1-morphisms by tensor product over $\Q$. The cubes
          $\X^{\Mor} \colon \Comp(n) \to \Mor^\Z$ from \eqref{eq:moritacube}
          for all $n\geq 0$ form a factorizing family 
          \begin{equation}
            \label{eq:moritacubefamily}
          \X^{\Mor}_\bullet \colon \Comp \to \Mor^\Z
        \end{equation}
          with
          respect to this structure. We give two related perspectives.

          As in \cite{stroppel2024braidingtypesoergelbimodules}, one
          can show that $\sBSBim_{n}$ respectively $\sSBim_{n}$ for all $n\geq
          0$ form the hom categories in semistrict monoidal 2-categories
          $\sBSBim$ and $\sSBim$ respectively, whose set of objects are the
          partitions of all integers $n\geq 0$ and tensor product is given by
          concatenation.
          
          Alternatively, following \cite[Definition 6.2.6]{LMGRSW}, $\sSBim$ can
          be modelled as $\EE_1$-monoidal $(2,2)$-category enriched over
          $\mathrm{add}_{\Q}^{B\Z}$, the presentably symmetric monoidal
          $\infty$-category of small additive idempotent-complete $\Q$-linear
          $\infty$-categories equipped with a $\Z$-action. Along these lines,
          \eqref{eq:stablecube} form a factorizing family 
          \begin{equation}
            \label{eq:stablecubefamily}
          \X_\bullet \colon \Comp \to \mathrm{st}_{\Q}^{B\Z}
        \end{equation}
          with respect to the
          presentably symmetric monoidal structure on $\mathrm{st}_{\Q}^{B\Z}$
          constructed in \cite[\S4]{LMGRSW}.
        \end{rem}

        \begin{rem}[Graphical language]
            \label{rem:graphicallanguage} All bimodules on the edges of the
        Soergel cubes \eqref{eq:moritacube} are generated under whiskering using
        $\boxtimes$ by basic induction bimodules, namely those associated to an
        edge from the initial vertex in some cube. These are parametrized by
        pairs $a,b\in \N$ and graphically denoted by \emph{split webs}
\begin{equation}\label{eq:split}
	\begin{tikzpicture}[scale =.75, smallnodes,rotate=270,anchorbase]
		\draw[very thick] (0.25,0) node[left,xshift=2pt]{$b$} to [out=90,in=210] (.5,.75);
		\draw[very thick] (.75,0) node[left,xshift=2pt]{$a$} to [out=90,in=330] (.5,.75);
		\draw[very thick] (.5,.75) to (.5,1.5) node[right,xshift=-2pt]{$a{+}b$};
	\end{tikzpicture}
	\; := \; 
            \phantom{()}_{R_{ab}}(R_{ab})_{R_{a+b}}
    \end{equation}
    We read graphical representations of $1$-morphisms as directed from right to
    left. Objects are compositions read from bottom to top. It is an established
    convention that the analogous \emph{merge web} represents a grading-shifted version of the right-adjoint restriction bimodule:
    \begin{equation}\label{eq:merge}
    \begin{tikzpicture}[scale =.75, smallnodes, rotate=90,anchorbase]
		\draw[very thick] (0.25,0) node[right,xshift=-2pt]{$a$} to [out=90,in=210] (.5,.75);
		\draw[very thick] (.75,0) node[right,xshift=-2pt]{$b$} to [out=90,in=330] (.5,.75);
		\draw[very thick] (.5,.75) to (.5,1.5) node[left,xshift=2pt]{$a{+}b$};
	\end{tikzpicture}\,
	\; := \; 
    {\qdeg^{-ab}} \!\!\!\!\phantom{()}_{R_{a+b}}(R_{ab})_{R_{ab}}
	\end{equation}
The counits and units for the two sides of the ambidextrous adjunction (up to shifts) are homogeneous bimodule homomorphisms that can be visualized as foams
    \[
        \begin{tikzpicture} [scale=.45,fill opacity=0.2,anchorbase,tinynodes]
            \path [fill=red] (4.25,-.5) to (4.25,2) to [out=170,in=10] (-.5,2) to (-.5,-.5) to 
                [out=0,in=225] (.75,0) to [out=90,in=180] (1.625,1.25) to [out=0,in=90] 
                    (2.5,0) to [out=315,in=180] (4.25,-.5);
            \path [fill=red] (3.75,.5) to (3.75,3) to [out=190,in=350] (-1,3) to (-1,.5) to 
                [out=0,in=135] (.75,0) to [out=90,in=180] (1.625,1.25) to [out=0,in=90] 
                    (2.5,0) to [out=45,in=180] (3.75,.5);
            \path[fill=blue] (.75,0) to [out=90,in=180] (1.625,1.25) to [out=0,in=90] (2.5,0);
            \draw [very thick] (2.5,0) to (.75,0);
            \draw [very thick] (.75,0) to [out=135,in=0] (-1,.5);
            \draw [very thick] (.75,0) to [out=225,in=0] (-.5,-.5);
            \draw [very thick] (3.75,.5) to [out=180,in=45] (2.5,0);
            \draw [very thick] (4.25,-.5) to [out=180,in=315] (2.5,0);
            \draw [very thick, red, directed=.75] (.75,0) to [out=90,in=180] (1.625,1.25)
                to [out=0,in=90] (2.5,0);
            \draw [very thick] (3.75,3) to (3.75,.5);
            \draw [very thick] (4.25,2) to (4.25,-.5);
            \draw [very thick] (-1,3) to (-1,.5);
            \draw [very thick] (-.5,2) to (-.5,-.5);
            \draw [very thick] (4.25,2) to [out=170,in=10] (-.5,2);
            \draw [very thick] (3.75,3) to [out=190,in=350] (-1,3);
            \node [blue, opacity=1]  at (1.625,.5) {$a{+}b$};
            \node[red, opacity=1] at (3.5,2.6) {$b$};
            \node[red, opacity=1] at (4,1.75) {$a$};		
        \end{tikzpicture}
        \, , \quad
        \begin{tikzpicture} [scale=.5,fill opacity=0.2,anchorbase,tinynodes]
            \path[fill=red] (-.75,4) to [out=270,in=180] (0,2.5) to [out=0,in=270] 
                (.75,4) .. controls (.5,4.5) and (-.5,4.5) .. (-.75,4);
            \path[fill=blue] (-.75,4) to [out=270,in=180] (0,2.5) to [out=0,in=270] (.75,4) -- 
                (2,4) -- (2,1) -- (-2,1) -- (-2,4) -- (-.75,4);
            \path[fill=red] (-.75,4) to [out=270,in=180] (0,2.5) to [out=0,in=270] 
                (.75,4) .. controls (.5,3.5) and (-.5,3.5) .. (-.75,4);
            \draw[very thick] (2,1) -- (-2,1);
            \path (.75,1) .. controls (.5,.5) and (-.5,.5) .. (-.75,1); 
            \draw [very thick, red, directed=.65] (-.75,4) to [out=270,in=180] 
                (0,2.5) to [out=0,in=270] (.75,4);
            \draw[very thick] (2,4) -- (2,1);
            \draw[very thick] (-2,4) -- (-2,1);
            \draw[very thick] (2,4) -- (.75,4);
            \draw[very thick] (-.75,4) -- (-2,4);
            \draw[very thick] (.75,4) .. controls (.5,3.5) and (-.5,3.5) .. (-.75,4);
            \draw[very thick] (.75,4) .. controls (.5,4.5) and (-.5,4.5) .. (-.75,4);
            \node [blue, opacity=1]  at (1.4,3.5) {$a{+}b$};
            \node[red, opacity=1] at (.25,3.4) {$a$};
            \node[red, opacity=1] at (-.25,4) {$b$};
        \end{tikzpicture}
        \, , \quad
        \begin{tikzpicture} [scale=.45,fill opacity=0.2,anchorbase,tinynodes]
            \path [fill=red] (4.25,2) to (4.25,-.5) to [out=170,in=10] (-.5,-.5) to (-.5,2) to
                [out=0,in=225] (.75,2.5) to [out=270,in=180] (1.625,1.25) to [out=0,in=270] 
                    (2.5,2.5) to [out=315,in=180] (4.25,2);
            \path [fill=red] (3.75,3) to (3.75,.5) to [out=190,in=350] (-1,.5) 
                to (-1,3) to [out=0,in=135] (.75,2.5) to [out=270,in=180] (1.625,1.25) 
                    to [out=0,in=270] (2.5,2.5) to [out=45,in=180] (3.75,3);
            \path[fill=blue] (2.5,2.5) to [out=270,in=0] (1.625,1.25) to [out=180,in=270] (.75,2.5);
            \draw [very thick] (4.25,-.5) to [out=170,in=10] (-.5,-.5);
            \draw [very thick] (3.75,.5) to [out=190,in=350] (-1,.5);
            \draw [very thick, red, directed=.75] (2.5,2.5) to [out=270,in=0] (1.625,1.25) 
                to [out=180,in=270] (.75,2.5);
            \draw [very thick] (3.75,3) to (3.75,.5);
            \draw [very thick] (4.25,2) to (4.25,-.5);
            \draw [very thick] (-1,3) to (-1,.5);
            \draw [very thick] (-.5,2) to (-.5,-.5);
            \draw [very thick] (2.5,2.5) to (.75,2.5);
            \draw [very thick] (.75,2.5) to [out=135,in=0] (-1,3);
            \draw [very thick] (.75,2.5) to [out=225,in=0] (-.5,2);
            \draw [very thick] (3.75,3) to [out=180,in=45] (2.5,2.5);
            \draw [very thick] (4.25,2) to [out=180,in=315] (2.5,2.5);
            \node [blue, opacity=1]  at (1.625,2) {$a{+}b$};
            \node[red, opacity=1] at (3.5,2.6) {$b$};
            \node[red, opacity=1] at (4,1.75) {$a$};		
        \end{tikzpicture}
        \, , \quad 
        \begin{tikzpicture} [scale=.5,fill opacity=0.2,anchorbase,tinynodes]
            \path[fill=red] (-.75,-4) to [out=90,in=180] (0,-2.5) to [out=0,in=90] 
                (.75,-4) .. controls (.5,-4.5) and (-.5,-4.5) .. (-.75,-4);
            \path[fill=blue] (-.75,-4) to [out=90,in=180] (0,-2.5) to [out=0,in=90] (.75,-4) -- 
                (2,-4) -- (2,-1) -- (-2,-1) -- (-2,-4) -- (-.75,-4);
            \path[fill=red] (-.75,-4) to [out=90,in=180] (0,-2.5) to [out=0,in=90] 
                (.75,-4) .. controls (.5,-3.5) and (-.5,-3.5) .. (-.75,-4);
            \draw[very thick] (2,-1) -- (-2,-1);
            \path (.75,-1) .. controls (.5,-.5) and (-.5,-.5) .. (-.75,-1); 
            \draw [very thick, red, directed=.65] (.75,-4) to [out=90,in=0] 
                (0,-2.5) to [out=180,in=90] (-.75,-4);
            \draw[very thick] (2,-4) -- (2,-1);
            \draw[very thick] (-2,-4) -- (-2,-1);
            \draw[very thick] (2,-4) -- (.75,-4);
            \draw[very thick] (-.75,-4) -- (-2,-4);
            \draw[very thick] (.75,-4) .. controls (.5,-3.5) and (-.5,-3.5) .. (-.75,-4);
            \draw[very thick] (.75,-4) .. controls (.5,-4.5) and (-.5,-4.5) .. (-.75,-4);
            \node [blue, opacity=1]  at (1.25,-1.25) {$a{+}b$};
            \node[red, opacity=1] at (-.25,-3.4) {$b$};
            \node[red, opacity=1] at (.25,-4.1) {$a$};
        \end{tikzpicture}
\]
of degrees $ab$, $-ab$, $ab$, and $-ab$ respectively. Squares containing an
initial vertex and edges in two adjacent coordinate axes have on their boundary
two isomorphic composites of induction bimodules. These isomorphisms and their
analogues for restriction bimodules can be visualized as
\[\begin{tikzpicture} [scale=.5,fill opacity=0.2,anchorbase]
	\path[fill=red] (-2.5,4) to [out=0,in=135] (-.75,3.5) to [out=270,in=90] (.75,.25)
		to [out=135,in=0] (-2.5,1);
	\path[fill=red] (-.75,3.5) to [out=270,in=125] (.29,1.5) to [out=55,in=270] (.75,2.75) 
		to [out=135,in=0] (-.75,3.5);
	\path[fill=red] (-.75,-.5) to [out=90,in=235] (.29,1.5) to [out=315,in=90] (.75,.25) 
		to [out=225,in=0] (-.75,-.5);
	\path[fill=red] (-2,3) to [out=0,in=225] (-.75,3.5) to [out=270,in=125] (.29,1.5)
		to [out=235,in=90] (-.75,-.5) to [out=135,in=0] (-2,0);
	\path[fill=red] (-1.5,2) to [out=0,in=225] (.75,2.75) to [out=270,in=90] (-.75,-.5)
		to [out=225,in=0] (-1.5,-1);
	\path[fill=red] (2,3) to [out=180,in=0] (.75,2.75) to [out=270,in=55] (.29,1.5)
		to [out=305,in=90] (.75,.25) to [out=0,in=180] (2,0);
	\draw[very thick] (2,0) to [out=180,in=0] (.75,.25);
	\draw[very thick] (.75,.25) to [out=225,in=0] (-.75,-.5);
	\draw[very thick] (.75,.25) to [out=135,in=0] (-2.5,1);
	\draw[very thick] (-.75,-.5) to [out=135,in=0] (-2,0);
	\draw[very thick] (-.75,-.5) to [out=225,in=0] (-1.5,-1);
	\draw[very thick, red, rdirected=.85] (-.75,3.5) to [out=270,in=90] (.75,.25);
	\draw[very thick, red, rdirected=.75] (.75,2.75) to [out=270,in=90] (-.75,-.5);	
	\draw[very thick] (-1.5,-1) -- (-1.5,2);	
	\draw[very thick] (-2,0) -- (-2,3);
	\draw[very thick] (-2.5,1) -- (-2.5,4);	
	\draw[very thick] (2,3) -- (2,0);
	\draw[very thick] (2,3) to [out=180,in=0] (.75,2.75);
	\draw[very thick] (.75,2.75) to [out=135,in=0] (-.75,3.5);
	\draw[very thick] (.75,2.75) to [out=225,in=0] (-1.5,2);
	\draw[very thick]  (-.75,3.5) to [out=225,in=0] (-2,3);
	\draw[very thick]  (-.75,3.5) to [out=135,in=0] (-2.5,4);
	\node[red,opacity=1] at (-2.25,3.75) {\tiny$c$};
	\node[red,opacity=1] at (-1.75,2.75) {\tiny$b$};	
	\node[red,opacity=1] at (-1.25,1.75) {\tiny$a$};
	\node[red,opacity=1] at (0,2.75) {\tiny$_{b+c}$};
	\node[red,opacity=1] at (0,.25) {\tiny$_{a+b}$};
\end{tikzpicture}
\quad , \quad
\begin{tikzpicture} [scale=.5,fill opacity=0.2,anchorbase]
	\path[fill=red] (-2.5,4) to [out=0,in=135] (.75,3.25) to [out=270,in=90] (-.75,.5)
		 to [out=135,in=0] (-2.5,1);
	\path[fill=red] (-.75,2.5) to [out=270,in=125] (-.35,1.5) to [out=45,in=270] (.75,3.25) 
		to [out=225,in=0] (-.75,2.5);
	\path[fill=red] (-.75,.5) to [out=90,in=235] (-.35,1.5) to [out=315,in=90] (.75,-.25) 
		to [out=135,in=0] (-.75,.5);	
	\path[fill=red] (-2,3) to [out=0,in=135] (-.75,2.5) to [out=270,in=125] (-.35,1.5) 
		to [out=235,in=90] (-.75,.5) to [out=225,in=0] (-2,0);
	\path[fill=red] (-1.5,2) to [out=0,in=225] (-.75,2.5) to [out=270,in=90] (.75,-.25)
		to [out=225,in=0] (-1.5,-1);
	\path[fill=red] (2,3) to [out=180,in=0] (.75,3.25) to [out=270,in=45] (-.35,1.5) 
		to [out=315,in=90] (.75,-.25) to [out=0,in=180] (2,0);				
	\draw[very thick] (2,0) to [out=180,in=0] (.75,-.25);
	\draw[very thick] (.75,-.25) to [out=135,in=0] (-.75,.5);
	\draw[very thick] (.75,-.25) to [out=225,in=0] (-1.5,-1);
	\draw[very thick]  (-.75,.5) to [out=225,in=0] (-2,0);
	\draw[very thick]  (-.75,.5) to [out=135,in=0] (-2.5,1);	
	\draw[very thick, red, rdirected=.75] (-.75,2.5) to [out=270,in=90] (.75,-.25);
	\draw[very thick, red, rdirected=.85] (.75,3.25) to [out=270,in=90] (-.75,.5);
	\draw[very thick] (-1.5,-1) -- (-1.5,2);	
	\draw[very thick] (-2,0) -- (-2,3);
	\draw[very thick] (-2.5,1) -- (-2.5,4);	
	\draw[very thick] (2,3) -- (2,0);
	\draw[very thick] (2,3) to [out=180,in=0] (.75,3.25);
	\draw[very thick] (.75,3.25) to [out=225,in=0] (-.75,2.5);
	\draw[very thick] (.75,3.25) to [out=135,in=0] (-2.5,4);
	\draw[very thick] (-.75,2.5) to [out=135,in=0] (-2,3);
	\draw[very thick] (-.75,2.5) to [out=225,in=0] (-1.5,2);
	\node[red,opacity=1] at (-2.25,3.75) {\tiny $c$};
	\node[red,opacity=1] at (-1.75,2.75) {\tiny $b$};	
	\node[red,opacity=1] at (-1.25,1.75) {\tiny $a$};
	\node[red,opacity=1] at (-.125,2.25) {\tiny $_{a+b}$};
	\node[red,opacity=1] at (-.125,.75) {\tiny $_{b+c}$};
\end{tikzpicture}
\quad, \quad
\begin{tikzpicture} [xscale=-.5,yscale=.5,fill opacity=0.2,anchorbase]
	\path[fill=red] (-2.5,4) to [out=0,in=135] (-.75,3.5) to [out=270,in=90] (.75,.25)
		to [out=135,in=0] (-2.5,1);
	\path[fill=red] (-.75,3.5) to [out=270,in=125] (.29,1.5) to [out=55,in=270] (.75,2.75) 
		to [out=135,in=0] (-.75,3.5);
	\path[fill=red] (-.75,-.5) to [out=90,in=235] (.29,1.5) to [out=315,in=90] (.75,.25) 
		to [out=225,in=0] (-.75,-.5);
	\path[fill=red] (-2,3) to [out=0,in=225] (-.75,3.5) to [out=270,in=125] (.29,1.5)
		to [out=235,in=90] (-.75,-.5) to [out=135,in=0] (-2,0);
	\path[fill=red] (-1.5,2) to [out=0,in=225] (.75,2.75) to [out=270,in=90] (-.75,-.5)
		to [out=225,in=0] (-1.5,-1);
	\path[fill=red] (2,3) to [out=180,in=0] (.75,2.75) to [out=270,in=55] (.29,1.5)
		to [out=305,in=90] (.75,.25) to [out=0,in=180] (2,0);
	\draw[very thick] (2,0) to [out=180,in=0] (.75,.25);
	\draw[very thick] (.75,.25) to [out=225,in=0] (-.75,-.5);
	\draw[very thick] (.75,.25) to [out=135,in=0] (-2.5,1);
	\draw[very thick] (-.75,-.5) to [out=135,in=0] (-2,0);
	\draw[very thick] (-.75,-.5) to [out=225,in=0] (-1.5,-1);
	\draw[very thick, red, directed=.85] (-.75,3.5) to [out=270,in=90] (.75,.25);
	\draw[very thick, red, directed=.75] (.75,2.75) to [out=270,in=90] (-.75,-.5);	
	\draw[very thick] (-1.5,-1) -- (-1.5,2);	
	\draw[very thick] (-2,0) -- (-2,3);
	\draw[very thick] (-2.5,1) -- (-2.5,4);	
	\draw[very thick] (2,3) -- (2,0);
	\draw[very thick] (2,3) to [out=180,in=0] (.75,2.75);
	\draw[very thick] (.75,2.75) to [out=135,in=0] (-.75,3.5);
	\draw[very thick] (.75,2.75) to [out=225,in=0] (-1.5,2);
	\draw[very thick]  (-.75,3.5) to [out=225,in=0] (-2,3);
	\draw[very thick]  (-.75,3.5) to [out=135,in=0] (-2.5,4);
	\node[red,opacity=1] at (-2.25,3.75) {\tiny$c$};
	\node[red,opacity=1] at (-1.75,2.75) {\tiny$b$};	
	\node[red,opacity=1] at (-1.25,1.75) {\tiny$a$};
	\node[red,opacity=1] at (0,2.75) {\tiny$_{b+c}$};
	\node[red,opacity=1] at (0,.25) {\tiny$_{a+b}$};
\end{tikzpicture}
\quad , \quad
\begin{tikzpicture} [xscale=-.5,yscale=.5,fill opacity=0.2,anchorbase]
	\path[fill=red] (-2.5,4) to [out=0,in=135] (.75,3.25) to [out=270,in=90] (-.75,.5)
		 to [out=135,in=0] (-2.5,1);
	\path[fill=red] (-.75,2.5) to [out=270,in=125] (-.35,1.5) to [out=45,in=270] (.75,3.25) 
		to [out=225,in=0] (-.75,2.5);
	\path[fill=red] (-.75,.5) to [out=90,in=235] (-.35,1.5) to [out=315,in=90] (.75,-.25) 
		to [out=135,in=0] (-.75,.5);	
	\path[fill=red] (-2,3) to [out=0,in=135] (-.75,2.5) to [out=270,in=125] (-.35,1.5) 
		to [out=235,in=90] (-.75,.5) to [out=225,in=0] (-2,0);
	\path[fill=red] (-1.5,2) to [out=0,in=225] (-.75,2.5) to [out=270,in=90] (.75,-.25)
		to [out=225,in=0] (-1.5,-1);
	\path[fill=red] (2,3) to [out=180,in=0] (.75,3.25) to [out=270,in=45] (-.35,1.5) 
		to [out=315,in=90] (.75,-.25) to [out=0,in=180] (2,0);				
	\draw[very thick] (2,0) to [out=180,in=0] (.75,-.25);
	\draw[very thick] (.75,-.25) to [out=135,in=0] (-.75,.5);
	\draw[very thick] (.75,-.25) to [out=225,in=0] (-1.5,-1);
	\draw[very thick]  (-.75,.5) to [out=225,in=0] (-2,0);
	\draw[very thick]  (-.75,.5) to [out=135,in=0] (-2.5,1);	
	\draw[very thick, red, directed=.75] (-.75,2.5) to [out=270,in=90] (.75,-.25);
	\draw[very thick, red, directed=.85] (.75,3.25) to [out=270,in=90] (-.75,.5);
	\draw[very thick] (-1.5,-1) -- (-1.5,2);	
	\draw[very thick] (-2,0) -- (-2,3);
	\draw[very thick] (-2.5,1) -- (-2.5,4);	
	\draw[very thick] (2,3) -- (2,0);
	\draw[very thick] (2,3) to [out=180,in=0] (.75,3.25);
	\draw[very thick] (.75,3.25) to [out=225,in=0] (-.75,2.5);
	\draw[very thick] (.75,3.25) to [out=135,in=0] (-2.5,4);
	\draw[very thick] (-.75,2.5) to [out=135,in=0] (-2,3);
	\draw[very thick] (-.75,2.5) to [out=225,in=0] (-1.5,2);
	\node[red,opacity=1] at (-2.25,3.75) {\tiny $c$};
	\node[red,opacity=1] at (-1.75,2.75) {\tiny $b$};	
	\node[red,opacity=1] at (-1.25,1.75) {\tiny $a$};
	\node[red,opacity=1] at (-.125,2.25) {\tiny $_{a+b}$};
	\node[red,opacity=1] at (-.125,.75) {\tiny $_{b+c}$};
\end{tikzpicture}
\] Analogous graphical interpretations are available for isomorphisms relating
boundaries of squares formed by edges in non-adjacent coordinate axes.
        \end{rem}

        Alternating compositions of induction and restriction functors satisfy relations
that are controlled by so-called \emph{square switch isomorphisms}. The following
proposition is a consequence of the {S}to{\v{s}i\'c} formula in the
categorified quantum group for $\slnn{2}$ \cite[Theorem 5.6]{KLMS} acting on
singular Soergel bimodules via foams \cite{QR}, see \cite[Appendix A]{HRW1} for
a discussion.

\begin{prop}[Square switch]\label{prop:squareswitch} For all non-negative integers
$a,b,c,d$ with $a+b=c+d$ we have isomorphisms
\begin{align*}
	\begin{tikzpicture}[smallnodes,rotate=90,baseline=.1em,scale=.66]
		\draw[very thick] (0,.25) to [out=150,in=270] (-.25,1) node[left,xshift=2pt]{$c$};
		\draw[very thick] (.5,.5) to (.5,1) node[left,xshift=2pt]{$d$};
		\draw[very thick] (0,.25) to  (.5,.5);
		\draw[very thick] (0,-.25) to (0,.25);
		\draw[very thick] (.5,-.5) to [out=30,in=330]  (.5,.5);
		\draw[very thick] (0,-.25) to  (.5,-.5);
		\draw[very thick] (.5,-1) node[right,xshift=-2pt]{$b$} to (.5,-.5);
		\draw[very thick] (-.25,-1)node[right,xshift=-2pt]{$a$} to [out=90,in=210] (0,-.25);
		\node at (0.2, -.65) {$s$};
		\node at (0.2, .65) {$r$};
		\end{tikzpicture}
		&\cong
		\bigoplus_{t=0}^{\min(r,s)} 
		\left(\begin{tikzpicture}[smallnodes,rotate=90,xscale=-1,baseline=-.4em,scale=.66]
			\draw[very thick] (0,.25) to [out=150,in=270] (-.25,1) node[left,xshift=2pt]{$d$};
			\draw[very thick] (.5,.5) to (.5,1) node[left,xshift=2pt]{$c$};
			\draw[very thick] (0,.25) to  (.5,.5);
			\draw[very thick] (0,-.25) to (0,.25);
			\draw[very thick] (.5,-.5) to [out=30,in=330](.5,.5);
			\draw[very thick] (0,-.25) to  (.5,-.5);
			\draw[very thick] (.5,-1) node[right,xshift=-2pt]{$a$} to (.5,-.5);
			\draw[very thick] (-.25,-1)node[right,xshift=-2pt]{$b$} to [out=90,in=210] (0,-.25);
			\node at (0.2, .85) {$s{-}t$};
			\node at (0.2, -.85) {$r{-}t$};
			\end{tikzpicture}
		 \right)^{\oplus \genfrac[]{0pt}{2}{b-c}{t}}\qquad \text{if } b\geq c \\
		 \begin{tikzpicture}[smallnodes,rotate=90,xscale=-1,baseline=-.4em,scale=.66]
			\draw[very thick] (0,.25) to [out=150,in=270] (-.25,1) node[left,xshift=2pt]{$d$};
			\draw[very thick] (.5,.5) to (.5,1) node[left,xshift=2pt]{$c$};
			\draw[very thick] (0,.25) to  (.5,.5);
			\draw[very thick] (0,-.25) to (0,.25);
			\draw[very thick] (.5,-.5) to [out=30,in=330](.5,.5);
			\draw[very thick] (0,-.25) to  (.5,-.5);
			\draw[very thick] (.5,-1) node[right,xshift=-2pt]{$a$} to (.5,-.5);
			\draw[very thick] (-.25,-1)node[right,xshift=-2pt]{$b$} to [out=90,in=210] (0,-.25);
			\node at (0.2, .65) {$r$};
			\node at (0.2, -.65) {$s$};
			\end{tikzpicture}
			&\cong
			\bigoplus_{t=0}^{\min(r,s)} 
			\left(\begin{tikzpicture}[smallnodes,rotate=90,baseline=.1em,scale=.66]
				\draw[very thick] (0,.25) to [out=150,in=270] (-.25,1) node[left,xshift=2pt]{$c$};
				\draw[very thick] (.5,.5) to (.5,1) node[left,xshift=2pt]{$d$};
				\draw[very thick] (0,.25) to  (.5,.5);
				\draw[very thick] (0,-.25) to (0,.25);
				\draw[very thick] (.5,-.5) to [out=30,in=330]  (.5,.5);
				\draw[very thick] (0,-.25) to  (.5,-.5);
				\draw[very thick] (.5,-1) node[right,xshift=-2pt]{$b$} to (.5,-.5);
				\draw[very thick] (-.25,-1)node[right,xshift=-2pt]{$a$} to [out=90,in=210] (0,-.25);
				\node at (0.2, -.85) {$r{-}t$};
				\node at (0.2, .85) {$s{-}t$};
				\end{tikzpicture}
			 \right)^{\oplus\genfrac[]{0pt}{2}{c-b}{t}}\qquad \text{if } b\leq c 
\end{align*}
with graded multiplicities given by balanced $\qdeg$-binomial coefficients.
\end{prop}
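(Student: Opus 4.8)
The plan is to deduce the square switch isomorphism from the corresponding relation in the categorified quantum group of $\slnn{2}$ by categorified skew Howe duality. Recall from \cite{QR} (and the summary in \cite[Appendix~A]{HRW1}) that the split and merge webs \eqref{eq:split}, \eqref{eq:merge}, together with the foam $2$-morphisms between their composites, organize into a $2$-representation of (the Karoubi envelope of) $\dot{\mathcal U}_Q(\slnn{2})$ on singular Soergel bimodules: a ladder rung of thickness $j$ joining two adjacent columns is the image of a divided power $\mathsf{E}^{(j)}$ or $\mathsf{F}^{(j)}$, and the weight $\mathbbm{1}_{\lambda}$ attached to a vertex of the Soergel cube is the difference of the heights of the two columns adjacent to the rung. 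This $\slnn{2}$-action involves only the two columns labelled $a,b$ (respectively $c,d$) and commutes with whiskering by the remaining strands, so it is enough to prove the isomorphism for the two-column picture and then apply the (additive, horizontally monoidal) foam $2$-functor.

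First I would make both sides explicit under this dictionary. Reading the left-hand web of the first isomorphism as a stack of two ladder rungs of thicknesses $s$ and $r$, it is the image (up to an overall grading shift) of $\mathsf{E}^{(r)}\mathsf{F}^{(s)}\mathbbm{1}_{\lambda}$, where the source weight $\lambda$ satisfies $\lambda + r - s = b - c$ (a short computation from $a+b=c+d$ and the heights of the intermediate colouring); the mirrored web appearing on the right-hand side with internal labels $r-t$, $s-t$ is the image of $\mathsf{F}^{(s-t)}\mathsf{E}^{(r-t)}\mathbbm{1}_{\lambda}$. The hypothesis $b\geq c$ translates to $\lambda + r - s\geq 0$, which is exactly the regime where the Sto\v{s}i\'c formula \cite[Theorem~5.6]{KLMS} gives
\[
    \mathsf{E}^{(r)}\mathsf{F}^{(s)}\mathbbm{1}_{\lambda}\;\cong\;\bigoplus_{t=0}^{\min(r,s)}\Big(\mathsf{F}^{(s-t)}\mathsf{E}^{(r-t)}\mathbbm{1}_{\lambda}\Big)^{\oplus\,\qbinom{\lambda+r-s}{t}}
\]
(the sum is finite since $\mathsf{E}^{(m)}=\mathsf{F}^{(m)}=0$ for $m<0$, whence the range $0\le t\le\min(r,s)$). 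Applying the foam $2$-representation to this isomorphism and re-inserting the whiskered columns produces the first claimed isomorphism, with multiplicities $\qbinom{b-c}{t}$. The case $b\leq c$ is obtained in the same way from the other form of the Sto\v{s}i\'c formula --- the one producing multiplicities $\qbinom{c-b}{t}$ --- applied to the transposed two-column picture, which is exactly the left-hand web of the second displayed isomorphism.

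The main obstacle is the grading bookkeeping that makes the multiplicity spaces \emph{balanced} (bar-invariant) $q$-binomials rather than shifted ones. The Sto\v{s}i\'c formula as stated in \cite{KLMS} produces $q$-binomial multiplicity spaces carrying an overall grading shift determined by the internal degrees in $\dot{\mathcal U}_Q(\slnn{2})$; one must verify that this shift is cancelled exactly by the shifts built into the merge-web convention \eqref{eq:merge} (the factor $\qdeg^{-ab}$, coming ultimately from the degree $2\ell(w_I)-2\ell(w_J)$ of the Frobenius trace $\del_I^J$). Equivalently, one uses the normalization of the foam $2$-representation of \cite{QR} in which cups and caps carry the symmetric degrees recorded in Remark~\ref{rem:graphicallanguage}; with that normalization the multiplicity spaces come out balanced, as explained in \cite[Appendix~A]{HRW1}. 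A secondary, purely combinatorial task is to pin down the weight identity $\lambda + r - s = b - c$ against the conventions of \S\ref{sec:soergelschober} (including which of $\mathsf{E},\mathsf{F}$ is applied first and the sign of $\lambda$). One could alternatively avoid skew Howe duality and resolve the composite of a split and a merge web directly inside the foam calculus using neck-cutting and bubble evaluation, but the reduction to \cite[Theorem~5.6]{KLMS} above is shorter and is the argument pointed to in \cite[Appendix~A]{HRW1}.
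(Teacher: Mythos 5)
Your proposal follows exactly the route the paper takes: the paper offers no independent argument but derives the square switch isomorphisms as a consequence of the Sto\v{s}i\'c formula \cite[Theorem 5.6]{KLMS} transported to singular Soergel bimodules via the foam $2$-representation of \cite{QR}, as discussed in \cite[Appendix A]{HRW1}, which is precisely the reduction you spell out (including the grading normalization making the multiplicities balanced $\qdeg$-binomials). The only caveat is the convention-dependent weight identity $\lambda+r-s=b-c$ (which of $\mathsf{E},\mathsf{F}$ acts first and the sign of $\lambda$), which you correctly flag as bookkeeping to be pinned down rather than a mathematical gap.
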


\begin{exa}By setting $a=c=0$ and $d=b$ in Proposition~\ref{prop:squareswitch} we obtain the following isomorphisms.
	\begin{equation}
		\label{eq:digon}
		\begin{tikzpicture}[smallnodes,rotate=90,anchorbase,scale=.66]
			\draw[very thick] (.5,.5) to (.5,1) node[left,xshift=2pt]{$b$};
			\draw[very thick] (.5,-.5) to [out=30,in=330]  (.5,.5);
			\draw[very thick] (.5,-.5) to [out=150,in=210]  (.5,.5);
			\draw[very thick] (.5,-1) node[right,xshift=-2pt]{$b$} to (.5,-.5);
			\node at (0.4, 0) {$s$};
			\end{tikzpicture}	
			\cong 
			\left(\begin{tikzpicture}[smallnodes,rotate=90,anchorbase,scale=.66]
				\draw[very thick] (.5,.5) to (.5,1) node[left,xshift=2pt]{$b$};
				\draw[very thick] (.5,-1) node[right,xshift=-2pt]{$b$} to (.5,.5);
				\end{tikzpicture}
				\right)^{\oplus\genfrac[]{0pt}{2}{b}{s}}
			\cong
			\qdeg^{-s(b-s)}H^*(\mathrm{Gr}(s,b))\otimes  
			\left(\begin{tikzpicture}[smallnodes,rotate=90,anchorbase,scale=.66]
				\draw[very thick] (.5,.5) to (.5,1) node[left,xshift=2pt]{$b$};
				\draw[very thick] (.5,-1) node[right,xshift=-2pt]{$b$} to (.5,.5);
				\end{tikzpicture}
				\right)
			\end{equation}
The appearance of the Grassmannian cohomology illustrates Remark~\ref{rem:cohflags}.
        \end{exa}

\subsection{Rickard complexes}
By the main result of \cite{LMGRSW}, the $\infty$-categorical homotopy categories of Soergel bimodules $\mathbf{K}^b(\SBim_n)$ for all $n\geq 0$ assemble into an $\EE_2$-monoidal locally stable graded $\Q$-linear $(\infty,2)$-category with an $\EE_2$-monoidal functor to $\mathrm{st}_{\Q}^{B\Z}$:
\[
    H_{\mathrm{loc}}\colon \mathbf{K}_{\mathrm{loc}}^b(\SBim)\to \mathrm{st}_{\Q}^{B\Z}
\] The braiding on $\mathbf{K}_{\mathrm{loc}}^b(\SBim)$ is implemented on the level of 1-morphisms by
Rouquier complexes \cite{0409593}, which can be conveniently described hom-category-wise in the stable graded $\Q$-linear monoidal dg categories of chain complexes of Soergel bimodules $\Ch^b(\SBim_n)$ for all $n\geq 0$ \cite{stroppel2024braidingtypesoergelbimodules}.

On general grounds, we expect all of these results to extend to the setting of
singular Soergel bimodules with braiding 1-morphisms generated by \emph{Rickard
complexes}, which we are now going to describe as $1$-morphisms in the locally stable graded $\Q$-linear dg 2-categories of chain complexes of singular Soergel bimodules $\Ch_{\mathrm{loc}}^b(\sSBim_n)$. For two compositions $c,c'$ of $n$, we use the notation ${}_{c'}\sSBim_c$ for the hom-category of morphisms from $c$ to $c'$ in $\sSBim_n$ and 
\[{}_{c'}\CS_c:= {}_{c'}\Ch^b(\sSBim_n)_c\] for the corresponding hom dg category in $\Ch_{\mathrm{loc}}^b(\sSBim_n)$.


\begin{definition} Let $ab$ and $cd$ be two compositions of $n\geq 0$ and set
    $\ell = a-d= c-b$. The \emph{$\ell$-shifted Rickard complex} ${}_{cd}\sRick_{ab}\in {}_{cd}\CS_{ab}$ is the bounded complex of singular Soergel bimodules
    \begin{align*}
        {}_{cd}\sRick_{ab}
    :=& 
    \left( \begin{tikzpicture}[smallnodes,rotate=90,baseline=.1em,scale=.66]
    \draw[very thick] (0,.25) to [out=150,in=270] (-.25,1) node[left,xshift=2pt]{$c$};
        \draw[very thick] (.5,.5) to (.5,1) node[left,xshift=2pt]{$d$};
    \draw[very thick] (0,.25) to  (.5,.5);
    \draw[very thick] (0,-.25) to (0,.25);
    \draw[dotted] (.5,-.5) to [out=30,in=330] node[above=-2pt]{$0$} (.5,.5);
    \draw[very thick] (0,-.25) to  (.5,-.5);
    \draw[very thick] (.5,-1) node[right,xshift=-2pt]{$b$} to (.5,-.5);
    \draw[very thick] (-.25,-1)node[right,xshift=-2pt]{$a$} to [out=90,in=210] (0,-.25);
    \end{tikzpicture}
    \xrightarrow{\; \chi_0^+}
    \qdeg^{-(a-d+1)} \tdeg
    \begin{tikzpicture}[smallnodes,rotate=90,baseline=.1em,scale=.66]
    \draw[very thick] (0,.25) to [out=150,in=270] (-.25,1) node[left,xshift=2pt]{$c$};
    \draw[very thick] (.5,.5) to (.5,1) node[left,xshift=2pt]{$d$};
    \draw[very thick] (0,.25) to  (.5,.5);
    \draw[very thick] (0,-.25) to (0,.25);
    \draw[very thick] (.5,-.5) to [out=30,in=330] node[above=-2pt]{$1$} (.5,.5);
    \draw[very thick] (0,-.25) to  (.5,-.5);
    \draw[very thick] (.5,-1) node[right,xshift=-2pt]{$b$} to (.5,-.5);
    \draw[very thick] (-.25,-1)node[right,xshift=-2pt]{$a$} to [out=90,in=210] (0,-.25);
    \end{tikzpicture}
    \xrightarrow{\; \chi_0^+}
    \qdeg^{-2(a-d+1)} \tdeg^2
    \begin{tikzpicture}[smallnodes,rotate=90,baseline=.1em,scale=.66]
    \draw[very thick] (0,.25) to [out=150,in=270] (-.25,1) node[left,xshift=2pt]{$c$};
    \draw[very thick] (.5,.5) to (.5,1) node[left,xshift=2pt]{$d$};
    \draw[very thick] (0,.25) to  (.5,.5);
    \draw[very thick] (0,-.25) to (0,.25);
    \draw[very thick] (.5,-.5) to [out=30,in=330] node[above=-2pt]{$2$} (.5,.5);
    \draw[very thick] (0,-.25) to  (.5,-.5);
    \draw[very thick] (.5,-1) node[right,xshift=-2pt]{$b$} to (.5,-.5);
    \draw[very thick] (-.25,-1)node[right,xshift=-2pt]{$a$} to [out=90,in=210] (0,-.25);
    \end{tikzpicture}
    \xrightarrow{\; \chi_0^+}
     \cdots \right)
    \end{align*}
    which terminates after $\min(b,d)+1$ steps. The bimodule homomorphisms $\chi_0^+$
that serve as components of the differential are uniquely determined up to a
unit scalar by their degrees and their non-vanishing. They can be constructed
from the units and counits of the ambidextrous adjunctions and succinctly
described by the foam in Figure~\ref{fig:diffslices}, see \cite[Section 3.4 and
Figure 1]{HRW2}.

    For $\ell=0$, i.e. if $(c,d)=(b,a)$, the shifted Rickard complex
    coindices with the (ordinary) Rickard complex, see e.g. \cite[Definition
    2.23]{HRW1}, for which we use the diagrammatic notation
    \[
        \left\llbracket
        \begin{tikzpicture}[rotate=90,scale=.5,smallnodes,anchorbase,yscale=1]
        \draw[very thick] (1,0) node[right,xshift=2pt]{$b$} \pu (0,2) node[left,xshift=-2pt]{$b$};
        \draw[line width=5pt,color=white] (0,0) \pu (1,2) ;
        \draw[very thick] (0,0) node[right,xshift=2pt]{$a$} \pu (1,2) node[left,xshift=-2pt]{$a$};
        \end{tikzpicture}
        \right\rrbracket	
        :=
        {}_{ba}\Rick_{ab} :=
        {}_{ba}\sRick_{ab}
    \]
    in which the double brackets serve as a reminder that crossing are to be parsed as a chain complex of bimodules. We also use double brackets to evaluate composite diagrams of crossings, merge webs, and split webs to chain complexes of singular Soergel bimodules by using the natural extension of the horizontal composition and tensor product $\boxtimes$ of $\sSBim$ to chain complexes thereover, see \eqref{eq:forkslide} for an example.  
    
\end{definition}

\begin{figure}[h]
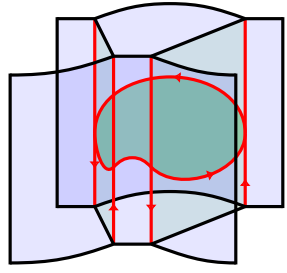

	\[\diffslices\]
\caption{The type of foam corresponding to $\chi_0^+$.}
\label{fig:diffslices}
\end{figure}
    
\begin{rem}\label{rem:Rickardbraidandforkslide}
    The (unshifted) Rickard complexes are invertible up to homotopy and yield an
    action of the $\N$-colored braid group \cite[Proposition 2.25]{HRW1}. They
    also satisfy certain naturality properties expected of braiding 1-morphisms in an $\mathbb{E}_2$-monoidal $(\infty,2)$-category, such as

    \begin{equation}\label{eq:forkslide}
        \left\llbracket
\begin{tikzpicture}[rotate=90,scale=.5,smallnodes,anchorbase,yscale=1]
\draw[very thick] (-.5,-1) node[right,xshift=2pt]{$b{+}c$} to [out=90,in=270] (.5,1);
\draw[very thick] (.5,1) to [out=30,in=270] (1,2) node[left,xshift=-2pt]{$c$};
\draw[very thick] (.5,1) to [out=150,in=270] (0,2) node[left,xshift=-2pt]{$b$};
\draw[line width=5pt,color=white] (.5,-1) to [out=90,in=270] (-1,2);
\draw[very thick] (.5,-1) node[right,xshift=2pt]{$a$} to [out=90,in=270] 
    (-1,2) node[left,xshift=-2pt]{$a$};
\end{tikzpicture}
\right\rrbracket	
\simeq
\left\llbracket
\begin{tikzpicture}[rotate=90,scale=.5,smallnodes,anchorbase,yscale=1]
\draw[very thick] (-.5,-1) node[right,xshift=2pt]{$b{+}c$} to (-.5,-.25);
\draw[very thick] (-.5,-.25) to [out=30,in=270] (1,2) node[left,xshift=-2pt]{$c$};
\draw[very thick] (-.5,-.25) to [out=150,in=270] (0,2) node[left,xshift=-2pt]{$b$};
\draw[line width=5pt,color=white] (.5,-1) to [out=90,in=270] (-1,2);
\draw[very thick] (.5,-1) node[right,xshift=2pt]{$a$} to [out=90,in=270] 
    (-1,2) node[left,xshift=-2pt]{$a$};
\end{tikzpicture}
\right\rrbracket
    \end{equation}
    see \cite[Proposition 2.27]{HRW1}. In fact, such \emph{fork-sliding} relations are the main ingredient for a lift of the main result of \cite{LMGRSW} to singular Soergel bimodules, the details of which will appear elsewhere.

Additional relations
\begin{equation}\label{eq:twistzipper}
	\left\llbracket
\begin{tikzpicture}[rotate=90,scale=.5,smallnodes,anchorbase]
	\draw[very thick] (1,-1) node[right,xshift=-2pt]{$b$} to [out=90,in=270] (0,1)
		to [out=90,in=210] (.5,2);
	\draw[line width=5pt,color=white] (0,-1) to [out=90,in=270] (1,1);
	\draw[very thick] (0,-1) node[right,xshift=-2pt]{$a$} to [out=90,in=270] (1,1)
		to [out=90,in=330] (.5,2);
	\draw[very thick] (.5,2) to (.5,2.75);
\end{tikzpicture}
\right\rrbracket		
\simeq
\qdeg^{a b}
\left\llbracket
\begin{tikzpicture}[rotate=90,scale=.5,smallnodes,anchorbase]
	\draw[very thick] (0,1) node[right,xshift=-2pt]{$a$} to [out=90,in=210] (.5,2);
	\draw[very thick] (1,1) node[right,xshift=-2pt]{$b$} to [out=90,in=330] (.5,2);
	\draw[very thick] (.5,2) to (.5,2.75);
\end{tikzpicture}
\right\rrbracket
\quad, \quad
\left\llbracket
\begin{tikzpicture}[rotate=90,scale=-.5,smallnodes,anchorbase]
	\draw[very thick] (1,-1) node[left,xshift=2pt]{$a$} to [out=90,in=270] (0,1)
		to [out=90,in=210] (.5,2);
	\draw[line width=5pt,color=white] (0,-1) to [out=90,in=270] (1,1);
	\draw[very thick] (0,-1) node[left,xshift=2pt]{$b$} to [out=90,in=270] (1,1)
		to [out=90,in=330] (.5,2);
	\draw[very thick] (.5,2) to (.5,2.75);
\end{tikzpicture}
\right\rrbracket		
\simeq
\qdeg^{a b}
\left\llbracket
\begin{tikzpicture}[rotate=90,scale=-.5,smallnodes,anchorbase]
	\draw[very thick] (0,1) node[left,xshift=2pt]{$b$} to [out=90,in=210] (.5,2);
	\draw[very thick] (1,1) node[left,xshift=2pt]{$a$} to [out=90,in=330] (.5,2);
	\draw[very thick] (.5,2) to (.5,2.75);
\end{tikzpicture}
\right\rrbracket	
\end{equation}
furthermore suggest that the $\mathbb{E}_2$-structure can be upgraded to an $f\mathbb{E}_2$-structure with the categorified balancing acting on the object $n$ by the equivalence $\qdeg^{n(n-1)}\tdeg^{n-1} \id_n$.
\end{rem}

The shifted Rickard complexes can be expressed using composite diagrams following \cite[Proposition 2.31]{HRW1}, which we recall here.

\begin{prop} For all natural numbers $a,b,c,d$ such that $a+b=c+d$, there exists a homotopy equivalences of complexes of singular Soergel bimodules

\begin{equation}
	\label{eqn:MCSred}
    {}_{cd}\sRick_{ab} 
    \; \simeq \;
    \left\llbracket
\begin{tikzpicture}[scale=.4,smallnodes,anchorbase,rotate=270]
\draw[very thick] (1,-1) to [out=150,in=270] (0,1) to (0,2) node[right=-2pt]{$b$}; 
\draw[line width=5pt,color=white] (0,-2) to (0,-1) to [out=90,in=210] (1,1);
\draw[very thick] (0,-2) node[left=-2pt]{$d$} to (0,-1) to [out=90,in=210] (1,1);
\draw[very thick] (1,1) to (1,2) node[right=-2pt]{$a$};
\draw[very thick] (1,-2) node[left=-2pt]{$c$} to (1,-1); 
\draw[very thick] (1,-1) to [out=30,in=330] node[below=-1pt]{$a-d$} (1,1); 
\end{tikzpicture}
\right\rrbracket 
\end{equation}
where the diagram on the right-hand side is understood as zero in case $\ell=a-d< 0$.
\end{prop}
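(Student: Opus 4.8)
\emph{Proof proposal.} The plan is to recognize the composite web on the right-hand side of \eqref{eqn:MCSred} as obtained, by the rule for evaluating composite diagrams, by pre- and post-composing the \emph{ordinary} Rickard complex of a single crossing (between edges of colours $d$ and $b$) with fixed split and merge webs involving the auxiliary edge of colour $\ell = a-d$; then to resolve that crossing, simplify each resulting planar web, and match the outcome — termwise and, crucially, differential-wise — with the complex that \emph{defines} ${}_{cd}\sRick_{ab}$. As a preliminary, note that when $\ell = a-d < 0$ the split web with an edge of colour $\ell$ is zero, which accounts for the asserted vanishing; so assume $\ell \ge 0$.

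The ordinary Rickard complex of the $(d,b)$-crossing is, by \cite[Definition 2.23]{HRW1}, a complex with $\min(b,d)+1$ terms, the $j$-th term being a ``theta'' web with internal edge of colour $j$, placed in homological degree $j$ with a $\qdeg$-shift. Whiskering by the flanking split and merge webs and rewriting the $\ell$-coloured edge into place using the fork-slide relation \eqref{eq:forkslide}, each such term becomes a planar web containing one or more digons. Collapsing these digons by the special case \eqref{eq:digon} of the square switch isomorphism (Proposition~\ref{prop:squareswitch}), and bookkeeping the graded multiplicities $\qbinom{\cdot}{t}$ and the grading shifts accumulated along the way, I expect each term to reduce precisely to the $j$-th web appearing in the definition of ${}_{cd}\sRick_{ab}$, carrying the shift $\qdeg^{-j(a-d+1)}\tdeg^{j}$; the vanishing of the relevant binomials for $j > \min(b,d)$ reproduces the termination of the complex after $\min(b,d)+1$ steps.

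It remains to identify the differentials. The differential on the resolved right-hand side is the crossing differential (of the same $\chi_0^+$-type, built from the units and counits of the ambidextrous adjunctions) whiskered by the flanking webs, together with the comparison isomorphisms relating the two bracketings of each digon collapse. I would show that under the square-switch identifications of the previous paragraph this is carried to the differential $\chi_0^+$ described by the foam in Figure~\ref{fig:diffslices}. This reduces to the naturality of the fork-slide equivalence \eqref{eq:forkslide} and of the square switch isomorphisms of Proposition~\ref{prop:squareswitch} with respect to the elementary foams built from the adjunction (co)units — essentially the content of \cite[Appendix A]{HRW1}, where such checks are reduced to Sto\v{s}i\'c-type relations in the categorified quantum group for $\slnn{2}$ acting via foams \cite{KLMS, QR}. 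Once the differentials are matched, the two complexes are identified on the nose; if the resolution produces additional contractible summands (from nontrivial multiplicities), a Gaussian elimination carried out inside the dg category ${}_{cd}\CS_{ab}$ yields the asserted homotopy equivalence.

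The main obstacle is thus the coherent, sign-correct matching of the differentials: the termwise isomorphisms are routine applications of the web relations recalled above, but verifying that they assemble into a chain map — and controlling the contracting homotopies needed for the Gaussian elimination — requires working with the explicit foam presentation of $\chi_0^+$ and the relations it satisfies. A secondary, purely bookkeeping obstacle is the accounting of $\qdeg$- and $\tdeg$-shifts through the iterated fork-slides and digon collapses, together with a careful treatment of the boundary cases $b = 0$, $d = 0$, or $\ell = 0$, in which several of the webs degenerate and the complex must be compared with the unshifted Rickard complex directly.
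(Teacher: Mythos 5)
You should note first that the paper does not actually prove this statement: it is recalled verbatim from \cite[Proposition 2.31]{HRW1} (``which we recall here''), so there is no internal proof to compare against, and your sketch has to stand on its own. As a strategy (resolve the crossing in the merge--crossing--split diagram, simplify termwise with web relations, then clean up by Gaussian elimination) it is in the right spirit, but two steps are genuinely missing. First, the case $\ell=a-d<0$: the content there is not that the right-hand side of \eqref{eqn:MCSred} vanishes (that is a convention) but that ${}_{cd}\sRick_{ab}$ itself is contractible when $a<d$; your remark that the split web with an $\ell$-coloured edge is zero says nothing about the left-hand side, and an argument that starts from the right-hand side cannot produce this. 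The extreme case $a=c=0$ is exactly Corollary~\ref{cor:digoncomplex}, which is a nontrivial contractibility statement, so this part needs its own proof rather than a dismissal.

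Second, and more seriously, the central claim of your main paragraph --- that after resolving the crossing and collapsing digons ``each term reduces precisely to the $j$-th web'' of ${}_{cd}\sRick_{ab}$ with the expected shift --- is false. Whiskering a resolution term of the $(d,b)$-crossing by the flanking split and merge and applying Proposition~\ref{prop:squareswitch} and \eqref{eq:digon} produces direct sums with balanced binomial multiplicities in essentially every homological degree: already for $(a,b,c,d)=(2,1,2,1)$ the first term decomposes as the through-$3$ web plus an extra identity summand, and the last term gives $\qbinom{2}{1}$ copies of the identity. These surplus summands are not ``additional contractible summands''; they live in adjacent homological degrees and must be cancelled in pairs, which requires verifying that specific components of the whiskered differential are isomorphisms, and the resulting homological perturbation produces a twisted differential that still has to be identified (up to homotopy, with signs) with $\chi_0^+$. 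That cancellation-and-identification is the actual content of the proposition, and your sketch defers exactly this to ``the main obstacle''. Relatedly, \eqref{eq:forkslide} is an equivalence of complexes containing crossings and is not applicable termwise to the planar webs obtained after resolving the crossing, so it cannot play the role you assign it; the termwise tools are the web isomorphisms of Proposition~\ref{prop:squareswitch} together with (co)associativity, and the global step is the Gaussian elimination you mention only in passing.
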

As in \cite{HRW1}, we will use the suggestive notation ${}_{cd}\MCS_{ab}$ for the right-hand side of \eqref{eqn:MCSred}: $M$ for \emph{merge}, $C$ for
\emph{crossing}, $S$ for \emph{split}. For the left-hand side we similarly set ${}_{cd}\MCSmin_{ab}:={}_{cd}\sRick_{ab}$, letting $\textsf{sans serif font}$ indicate a  homotopy equivalent ``simplified'' complex.

We record the special case for $a=c=0$ and $n=b=d>0$ and hence $\ell=-n<0$.

\begin{cor}\label{cor:digoncomplex} Let $n\geq 1$, then the \emph{complex of digons}
    \begin{align*}
        {}_{0n}\sRick_{0n}
    =& 
    \left( 
        \begin{tikzpicture}[smallnodes,rotate=0,baseline=.1em,scale=.66]
            \draw[very thick] (-1,0) to (-.5,0);
            \draw[dotted] (-.5,0) to [out=60,in=120] node[above]{$0$} (.5,0);
            \draw[very thick] (-.5,0) to [out=300,in=240] node[below]{$n$} (.5,0);
            \draw[very thick] (.5,0) to (1,0);
            \end{tikzpicture}
        \xrightarrow{\; \chi_0^+}\cdots \xrightarrow{\; \chi_0^+}
    \qdeg^{s(n-1)} \tdeg^s\;
    \begin{tikzpicture}[smallnodes,rotate=0,baseline=.1em,scale=.66]
    \draw[very thick] (-1,0) to (-.5,0);
    \draw[very thick] (-.5,0) to [out=60,in=120] node[above]{$s$} (.5,0);
    \draw[very thick] (-.5,0) to [out=300,in=240] node[below]{$n-s$} (.5,0);
    \draw[very thick] (.5,0) to (1,0);
    \end{tikzpicture}
    \xrightarrow{\; \chi_0^+}\cdots \xrightarrow{\; \chi_0^+}
    \qdeg^{n(n-1)} \tdeg^n\;
    \begin{tikzpicture}[smallnodes,rotate=0,baseline=.1em,scale=.66]
    \draw[very thick] (-1,0) to (-.5,0);
    \draw[very thick] (-.5,0) to [out=60,in=120] node[above]{$n$} (.5,0);
    \draw[dotted] (-.5,0) to [out=300,in=240] node[below]{$0$} (.5,0);
    \draw[very thick] (.5,0) to (1,0);
    \end{tikzpicture} 
    \right) 
    \end{align*}
    is contractible.    
\end{cor}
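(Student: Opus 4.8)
The plan is to obtain Corollary~\ref{cor:digoncomplex} as an immediate special case of the homotopy equivalence~\eqref{eqn:MCSred}. First I would check admissibility of the parameter choice $a=c=0$, $b=d=n$: the hypothesis $a+b=c+d$ holds since both sides equal $n$, and unwinding the Definition preceding Figure~\ref{fig:diffslices} shows that the complex of digons written in the statement of the corollary is \emph{by definition} the $\ell$-shifted Rickard complex ${}_{0n}\sRick_{0n}$ for this choice of parameters, with $\ell = a-d = -n$. Since $n\ge 1$ we have $\ell<0$, so the clause in the proposition containing \eqref{eqn:MCSred} that ``the diagram on the right-hand side is understood as zero in case $\ell=a-d<0$'' applies: the target ${}_{0n}\MCS_{0n}$ of the equivalence \eqref{eqn:MCSred} is the zero complex. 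Hence ${}_{0n}\sRick_{0n}\simeq 0$, which is precisely the assertion of contractibility.

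For completeness I would also indicate the more self-contained, if less efficient, route. By the digon relation~\eqref{eq:digon} --- the $a=c=0$, $d=b=n$ specialization of the square switch isomorphism of Proposition~\ref{prop:squareswitch} --- the $s$-th chain group of the complex of digons, after absorbing its grading shift $\qdeg^{s(n-1)}\tdeg^s$, is isomorphic to $\qdeg^{s(s-1)}\,H^*(\mathrm{Gr}(s,n))\otimes\id_n$. Thus the complex of digons is $\id_n$ tensored over $\Q$ with an explicit complex $V_\bullet$ of finite-dimensional graded $\Q$-vector spaces, concentrated in homological degrees $0\le s\le n$ with $\dim V_s = \qbinom{n}{s}$ (so that the graded Euler characteristic is $(1-1)^n=0$, consistent with acyclicity). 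It then remains to identify the maps induced by the differentials $\chi_0^+$ --- described by the foam of Figure~\ref{fig:diffslices}, cf.\ \cite[Section~3.4 and Figure~1]{HRW2} --- with the differential of a Koszul-type complex of the form $(\Lambda^\bullet\Q^n,\iota_\varphi)$ for $\varphi\ne 0$, which is exact by the standard splitting $\Lambda^\bullet\Q^n\cong\Lambda^\bullet(\ker\varphi)\otimes\Lambda^\bullet(\Q e)$.

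The only real obstacle lies in this second route: making the comparison of $\chi_0^+$ with an honest Koszul differential precise --- in particular verifying $\chi_0^+\circ\chi_0^+\simeq 0$ and pinning down the homology --- is exactly the work carried out in the proof of the proposition giving \eqref{eqn:MCSred} (ultimately the Sto\v{s}i\'c-type recursion via \cite[Theorem~5.6]{KLMS}, \cite{QR}, \cite[Appendix~A]{HRW1}). For that reason I would actually write only the first route, treating the corollary as a genuine instance of \eqref{eqn:MCSred} rather than re-deriving the underlying computation.
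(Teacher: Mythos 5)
Your primary route is exactly the paper's argument: the corollary is recorded as the specialization $a=c=0$, $b=d=n$ of the proposition containing \eqref{eqn:MCSred}, where $\ell=a-d=-n<0$ forces the right-hand side to be zero, so ${}_{0n}\sRick_{0n}\simeq 0$. The alternative Koszul-style computation you sketch is not needed and is not what the paper does, and you correctly chose to rely on the first route.
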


Let $\Comp_m(n)\subset \Comp(n)$ denote the set of
compositions of $n$ with exactly (positive) $m$ parts. For $\underline{k}\in C$ we consider
the singular Bott--Samelson bimodule corresponding to the (shifted) web
\begin{equation}
	\label{eq:explodedweb}
{}_{n} {W^{\underline{k}}}_{n} :=  \qdeg^{\sum_{i=1}^{m-1}{(k_1+\cdots+k_i)k_{i+1}}}\; 
\begin{tikzpicture}[smallnodes,rotate=0,baseline=.1em,scale=1]
    \draw[very thick] (-1,0) node[left,xshift=-1pt]{$n$} to (-.5,0);
    \draw[very thick] (-.5,0) to [out=60,in=120] node[above]{$k_m$} (.5,0);
    \draw[very thick] (-.45,0.05) to [out=300,in=240] (.45,.05);
    \draw[very thick] (-.4,0.1) to [out=300,in=240] node[above]{${\tiny\vdots}$} (.4,.1);
    \draw[very thick] (-.5,0) to [out=300,in=240] node[below]{$k_1$} (.5,0);
    \draw[very thick] (.5,0) to (1,0) node[right,xshift=1pt]{$n$};
    \end{tikzpicture}
\end{equation}
If $1\leq i<m$ and $\partial_i \underline{k} := (k_1,\dots, k_i+k_{i+1},
k_{i+2}, \dots, k_m)$, there exists a unique morphism 
\[
	F_{i,\underline{k}}:= {}_{n} {W^{\partial_i  \underline{k}}}_{n} \to {}_{n} {W^{\underline{k}}}_{n}
\]
composed of adjunction units and (co)associativity morphisms. These
morphisms assemble into a commutative diagram 
\[
	{}_{n} {W^{(a+b)}}_{n} 
	\to \bigoplus_{\underline{k}\in \Comp_2(n)}	{}_{n} {W^{\underline{k}}}_{n} 
	\to \cdots
	\to \bigoplus_{\underline{k}\in \Comp_{n-1}(n)}{}_{n} {W^{\underline{k}}}_{n} 
	\to {}_{n} {W^{(1,\dots, 1)}}_{n} 		
\]
whose total cofiber we denote by ${\Expl}_{n}$.

\begin{prop}\label{prop:expl}
Let $n\geq 1$, then we have an equivalence ${\Expl}_{n} \simeq \qdeg^{n(n-1)} 
\id_n$.
\end{prop}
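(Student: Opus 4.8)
The plan is to argue by induction on $n$, identifying $\Expl_{n}$ with the total cofiber of the cube $\underline{k}\mapsto{}_{n}{W^{\underline{k}}}_{n}$ over $\Comp(n)$, whose transition maps are the morphisms $F_{i,\underline{k}}$ built from adjunction units and (co)associativity isomorphisms. The base cases $n=0,1$ are immediate, since then $\Comp(n)$ has a single vertex and $\Expl_{n}={}_{n}{W^{(n)}}_{n}=\id_{n}=\qdeg^{n(n-1)}\id_{n}$. For the inductive step ($n\ge 2$) I would filter $\Expl_{n}$ by the size of the first part of the composition: for $1\le j\le n$, the full subposet of $\Comp(n)$ consisting of compositions with first part equal to $j$ is isomorphic to $\Comp(n-j)$ via deletion of the first part, and slicing the cube $[0,1]^{n-1}=[0,1]\times[0,1]^{n-2}$ repeatedly along the first coordinate exhibits $\Expl_{n}$ as an iterated cofiber whose $j$-th subquotient is the total cofiber of the restriction of the cube to $\{\text{first part}=j\}$.

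Next I would compute these subquotients. Unwinding the grading shift in \eqref{eq:explodedweb} and applying the coassociativity isomorphisms of the singular Bott--Samelson calculus (Remark~\ref{rem:graphicallanguage}), one identifies, for $\underline{m}\in\Comp(n-j)$, the $1$-morphism ${}_{n}{W^{(j,\underline{m})}}_{n}$ with $\qdeg^{j(n-j)}$ times the split web $n\to(j,n-j)$, followed by $\id_{j}\boxtimes{}_{n-j}{W^{\underline{m}}}_{n-j}$, followed by the merge web $(j,n-j)\to n$; this identification is compatible with the $F_{i,\underline{k}}$, as on both sides the relevant maps are the unique comparison morphisms built from units and (co)associativity. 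Horizontal composition with a fixed $1$-morphism and $\id_{j}\boxtimes(-)$ are exact and commute with total cofibers, so the $j$-th subquotient equals $\qdeg^{j(n-j)}$ times the merge web $(j,n-j)\to n$, followed by $\id_{j}\boxtimes\Expl_{n-j}$, followed by the split web $n\to(j,n-j)$. By the inductive hypothesis $\Expl_{n-j}\simeq\qdeg^{(n-j)(n-j-1)}\id_{n-j}$ and the digon evaluation \eqref{eq:digon} (which gives that the merge web after the split web is $(\id_{n})^{\oplus\qbinom nj}$), this subquotient is homotopy equivalent to $\qdeg^{(n-j)(n-1)}(\id_{n})^{\oplus\qbinom nj}$, concentrated in homological degree $-(j-1)$.

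Assembling the subquotients, $\Expl_{n}$ is homotopy equivalent to the totalization of a length-$n$ complex $E^{\bullet}$ whose term in homological degree $-r$ ($0\le r\le n-1$) is $\qdeg^{(n-r-1)(n-1)}(\id_{n})^{\oplus\qbinom n{r+1}}$, with differentials induced by the $F_{i,\underline{k}}$. I would then compare $E^{\bullet}$ with the complex of digons ${}_{0n}\sRick_{0n}$ of Corollary~\ref{cor:digoncomplex}: after resolving each of its terms by $(\id_{n})^{\oplus\qbinom ns}$ via \eqref{eq:digon}, its stupid truncation $\sigma^{\le n-1}{}_{0n}\sRick_{0n}$ has exactly the same terms as $E^{\bullet}$, up to an overall homological shift by $n-1$. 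Granting that the differentials also match, the contractibility of ${}_{0n}\sRick_{0n}$ forces $\sigma^{\le n-1}{}_{0n}\sRick_{0n}$ to be homotopy equivalent to its discarded top term $\qdeg^{n(n-1)}\id_{n}$, placed in homological degree $n-1$, so that $E^{\bullet}\simeq\qdeg^{n(n-1)}\id_{n}$ in degree $0$, which completes the induction.

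The main obstacle is precisely the differential-matching in the previous step: after the coassociativity reductions and after resolving digons via \eqref{eq:digon}, one must check that the differentials of $E^{\bullet}$ agree, up to units, with the maps $\chi_{0}^{+}$ appearing in ${}_{0n}\sRick_{0n}$. I would approach this using that $\chi_{0}^{+}$ is pinned down up to a unit scalar by its internal degree and its non-vanishing, so that the task reduces to verifying that the morphisms induced from the $F_{i,\underline{k}}$ are nonzero; this in turn should follow by tracking units, counits and (co)associativity morphisms through the square-switch isomorphisms of Proposition~\ref{prop:squareswitch}. As a fallback, since the Euler characteristic of $E^{\bullet}$ equals $\qdeg^{n(n-1)}$ (by the balanced $q$-binomial identity $\sum_{s=0}^{n}(-1)^{s}\qdeg^{s(n-1)}\qbinom ns=0$ read off from the contractible digon complex), it would suffice to show that $E^{\bullet}$ has cohomology only in its top degree; but this again reduces to understanding the same differentials.
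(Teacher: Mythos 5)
Your argument is in essence the paper's own proof, which is stated only as ``induction on $n$, using Corollary~\ref{cor:digoncomplex}'': you filter the composition cube by the first part, identify the associated graded pieces via coassociativity, the inductive hypothesis and the digon relation \eqref{eq:digon}, and then conclude from the contractibility of the digon complex, which is exactly the intended route. The step you flag—matching the induced differentials with $\chi_0^+$—is the only substantive verification left and your plan for it is the right one (the relevant Hom-spaces are one-dimensional in the given degree, so non-vanishing suffices), while the unmentioned worry about higher components of the perturbed differential disappears for degree reasons, since each graded piece is concentrated in a single homological degree.
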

\begin{proof}
By induction on $n$, using Corollary~\ref{cor:digoncomplex}.
\end{proof}

\subsection{Soergel schober dictionary}

\begin{thm}[Soergel schobers]
    The family of Soergel cubes \eqref{eq:stablecubefamily} defines a
    factorizing family of (graded $\Q$-linear) framed $\AA_n$-schobers in the
    sense of Definitions~\ref{defi:anschober}, \ref{defi:highercotwist},
    \ref{defi:multschober}. The higher twist functors and cotwists are equivalent to grading shifted
    Rickard complexes and grading shifted identity morphisms, respectively:
    \[T_{ab} \simeq \qdeg^{ab}{}_{ba}\Rick_{ab}\quad, \quad  T_{n}\simeq \qdeg^{n(n-1)}\id_n\]

\end{thm}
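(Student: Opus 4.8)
The plan is to verify the five $\AA_n$-schober conditions of Definition~\ref{defi:anschober} plus the cotwist condition of Definition~\ref{defi:highercotwist} for the family $\X_\bullet$ of stable Soergel cubes from \eqref{eq:stablecubefamily}, leveraging the factorizing-family formalism to reduce the work. First I would invoke Remark~\ref{rem:simplefact}: once we know $\X_\bullet$ is a monoidal functor $\Comp \to \mathrm{st}_\Q^{B\Z}$ (which is part of the construction recalled in Remark on \eqref{eq:moritacubefamily}/\eqref{eq:stablecubefamily}, following \cite[\S4]{LMGRSW}), the properties \textbf{(Recursiveness)} and \textbf{(Far-commutativity)} hold automatically. \textbf{(Adjunctability)} is built into the construction of $\X^{\Mor}$ from \eqref{eq:moritacube}, since every induction bimodule ${}_BB_A$ has the restriction bimodule ${}_AB_B$ as right adjoint, and this is preserved on passing to derived $\infty$-categories; likewise \textbf{(Far-commutativity)} at the level of individual squares was already noted for \eqref{eq:moritacube} because the relevant composite bimodules are canonically isomorphic. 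So the real content is \textbf{(Twist invertibility)} and \textbf{(Defect vanishing)}, together with the framing condition.

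The key computational input is Theorem~\ref{thm:schoberbialgebra}: for an $\AA_n$-schober (or, here, a candidate for one) every pair $(ab,cd)$ of compositions gives a canonical exact complex \eqref{eq:coherentbialgebra} expressing $\substack{ab\\n+1\\cd}$ as an iterated extension of the terms $\substack{ikjl\\cd}\circ(\id_i\otimes T_{jk}\otimes\id_l)\circ\substack{ab\\ijkl}[m]$. The strategy is an induction on $n$: assuming the lower twists $T_{jk}$ (for $j+k\le n$) have been identified with shifted Rickard complexes $\qdeg^{jk}{}_{kj}\Rick_{jk}$, I would use the categorified bialgebra relation together with the fork-sliding relation \eqref{eq:forkslide}, the twist/zipper relations \eqref{eq:twistzipper}, the square switch isomorphism (Proposition~\ref{prop:squareswitch}), and the expression \eqref{eqn:MCSred} of shifted Rickard complexes as composite $\MCS$ diagrams, to recognize the iterated extension on the singular-Soergel side. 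Concretely: the complex \eqref{eq:twistbialgebra} for the pair $(ab,ba)$ should be matched term-by-term with the known filtration of the Rickard complex $\qdeg^{ab}{}_{ba}\Rick_{ab}$ by powers of the crossing (its ``cube of resolutions''), using that in $\Ch^b(\sSBim)$ these filtrations are computed by exactly the $\MCS$ technology developed in \cite{HRW1,HRW2} and summarized in \S\ref{sec:soergelschober}. The crux is that \emph{both} sides are built recursively from the same lower-order data — the bifactorization cube $Q(ab,ba)$ decomposes (Definition~\ref{defi:bifactorizationcube}) in a way that mirrors the recursive structure of Rickard complexes under horizontal composition — so a careful bookkeeping of grading shifts (matching $\qdeg^{-s(s+a-d)}$ against the $\qdeg^{jk}$ factors) closes the induction and simultaneously shows each $T_{ab}$ is a shifted Rickard complex, hence invertible up to homotopy by Remark~\ref{rem:Rickardbraidandforkslide}, giving \textbf{(Twist invertibility)}. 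Running the same argument for a general pair $(ab,cd)$ with $a\neq d$ and noting that in that case the alternative ``composite $\MCS$'' description \eqref{eqn:MCSred} has $\ell=a-d$ with the relevant crossing labelled $a-d$, one checks that the Beck--Chevalley defect $R_{ab,cd}$ — which is the total fiber of the bifactorization cube, i.e. the ``error term'' beyond the exact complex \eqref{eq:coherentbialgebra} — vanishes; this is \textbf{(Defect vanishing)}, and it is essentially the statement that the categorified bialgebra relation of Theorem~\ref{thm:main} holds on the nose, which is exactly what \S\ref{sec:Soergelschoberbialgebra} establishes via \cite{HRW1,HRW2}.

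For the framing, by Definition~\ref{defi:highercotwist} it suffices (after reducing via recursiveness to the trivial composition) to show the cotwist $T_{n+1}\colon \X_{0\cdots0}\to\X_{0\cdots0}$ — i.e. the endofunctor of the derived category of graded $R_{n+1}$-modules given by the total cofiber in Definition~\ref{defi:cotwist} — is an autoequivalence, and to identify it. Here the plan is to use Proposition~\ref{prop:expl}: the functor ${\Expl}_n \simeq \qdeg^{n(n-1)}\id_n$ is precisely the total cofiber of the ``exploded web'' diagram $\bigoplus_{\underline k\in\Comp_m(n)}{}_n W^{\underline k}{}_n$, and this diagram is exactly the image under $\X^{\Mor}$ (composed with restriction-then-induction along the full cube) of the inflation/deflation composite defining $T_{n+1}$ — the partially symmetric rings $R_{\underline k}$ on the vertices of the composition cube are what the webs ${}_n W^{\underline k}{}_n$ encode. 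So $T_{n+1}\simeq \qdeg^{n(n-1)}\id_{n}$, an autoequivalence, establishing framing and the second displayed formula. I expect the \textbf{main obstacle} to be the coherence bookkeeping in the inductive identification of \eqref{eq:coherentbialgebra} with the Rickard-complex filtration: matching not just the objects and grading shifts but the differentials and the coherent system of zero-homotopies between the Beck--Chevalley-cube description and the foam/$\chi_0^+$ description of \cite{HRW2} is delicate, and is the step where the detailed computations of \S\ref{sec:soergelschober} (square switch, fork slide, the digon contractibility of Corollary~\ref{cor:digoncomplex}) really must be deployed rather than quoted.
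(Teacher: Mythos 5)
Your proposal follows essentially the same route as the paper: Adjunctability, Recursiveness and Far-commutativity are absorbed into the construction and monoidality of the Soergel cubes, Twist invertibility and Defect vanishing are established by the recursion scheme of Theorem~\ref{thm:schoberbialgebra} (cf.\ Remark~\ref{rem:equivalent}) combined with the Soergel-side categorified bialgebra relation of Theorem~\ref{thm:main}, proved via the Koszul/$\MCS$ technology of \cite{HRW1,HRW2} with the explicit low-rank examples as base cases, and the framing is reduced to Proposition~\ref{prop:expl} identifying the cotwist with $\qdeg^{n(n-1)}\id_n$. This is precisely how the paper argues (Propositions~\ref{prop:highertwistRickard} and \ref{prop:defectzero} plus \S\ref{sec:Soergelschoberbialgebra}--\ref{sec:proofbialgebra}), and you correctly flag the coherence bookkeeping between the Beck--Chevalley and foam descriptions as the delicate step.
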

\begin{proof}[Proof outline]
    We have already observed that the data \eqref{eq:stablecube} satisfies the
    Adjunctability and Far-commutativity conditions of Definition~\ref{defi:anschober} and Recursiveness follows by
    induction on $n$ using the monoidality of the Soergel cubes. 
    We outsource the checks for the Twist invertibility and Defect vanishing conditions into Proposition~\ref{prop:highertwistRickard} and Proposition~\ref{prop:defectzero} respectively. The invertibility of cotwist has already been checked in Proposition~\ref{prop:expl}.
\end{proof}

\begin{prop}
    \label{prop:highertwistRickard}
Let $ab$ be a composition of $n$, then the higher twist functor $T_{ab}$ for
the Soergel cube \eqref{eq:stablecube} is equivalent to tensoring with the
grading shifted Rickard complex $\qdeg^ab{}_{ba}\Rick_{ab}$ and, thus, an
equivalence.
\end{prop}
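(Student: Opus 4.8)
The plan is to unwind the definition of $T_{ab}$ and match it, piece by piece, with the $\MCS$-presentation of the Rickard complex. By Definition~\ref{defi:highertwist}, $T_{ab}$ is the Beck--Chevalley defect, i.e.\ the total fiber of the Beck--Chevalley cube of Construction~\ref{construction:bccube}, of the pullback of the Soergel cube \eqref{eq:stablecube} along the inclusion of the bifactorization cube $Q(ab,ba)$. Using the zigzag description from Remark~\ref{rem:valuesbc}, the vertices of this Beck--Chevalley cube are explicit alternating composites of the induction and restriction bimodules on the edges of the Soergel cube, so the first step is simply to write these out.

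I would then induct on $n$ following the recursive structure of the bifactorization cube from Definition~\ref{defi:bifactorizationcube}. The base case is $Q(11,11)$, the square whose initial vertex carries $R^{S_2}$ and whose other three vertices carry $R$; its Beck--Chevalley defect is $\fib\bigl({}_{R}(R\otimes_{R^{S_2}}R)_{R}\to{}_{R}R_{R}\bigr)$, which after the shift $\qdeg^{1}$ is precisely the two-term Rickard complex ${}_{11}\Rick_{11}$ for the conventions fixed in \S\ref{sec:soergelschober}. For the inductive step, each clause of Definition~\ref{defi:bifactorizationcube} builds $Q(ab,ba)$ from a lower bifactorization cube by multiplying with a $\{0,1\}$-factor, a $\{0\}$-factor, or by the sandwich $\bigcup_{\epsilon}\{\epsilon\}\times Q(\cdot,\cdot)\times\{\epsilon\}$. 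On the Soergel cube these correspond, respectively, to pre/post-composition with a split--merge adjoint pair, a trivial (identity) extension, and two-sided whiskering by a split web on one side and a merge web on the other. Passing to total fibers, $T_{ab}$ therefore becomes a composite $(\text{merge web})\circ(\text{whiskered lower twist})\circ(\text{split web})$, up to a grading shift tracked from \eqref{eq:merge} and the degrees of the adjunction (co)units; by induction the lower twist is a shifted Rickard complex, so this is a merge--Rickard--split diagram.

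The remaining step collapses that diagram to the desired Rickard complex. Using the square switch isomorphisms of Proposition~\ref{prop:squareswitch}, the fork-slide relation \eqref{eq:forkslide}, and the $\MCS$-presentation \eqref{eqn:MCSred}, one recognizes the merge--Rickard--split diagram as $\qdeg^{ab}{}_{ba}\Rick_{ab}$; concretely, the recursions of Definition~\ref{defi:bifactorizationcube} are dual to the recursions that the $\MCS$-complexes obey (those underlying \cite[Propositions 2.27 and 2.31]{HRW1}), so the two sides satisfy the same recursion from the same base case. Since the unshifted Rickard complexes are invertible up to homotopy by Remark~\ref{rem:Rickardbraidandforkslide}, tensoring with $\qdeg^{ab}{}_{ba}\Rick_{ab}$ is an equivalence, and hence so is $T_{ab}$.

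The main obstacle is the homotopy-coherent bookkeeping in the inductive step: one must match not only the underlying objects but the whole coherent diagram --- differentials and higher zero-homotopies --- of the Beck--Chevalley cube's total fiber with the twisted-complex structure of the Rickard complex. The square switch isomorphisms introduce extra summands with binomial multiplicities, and reorganizing these into the simplified complex ${}_{ba}\Rick_{ab}$ is precisely where the cancellation technology of \cite{HRW1,HRW2} is needed, best packaged through the homotopy-coherent lax-matrix calculus of Remark~\ref{rem:laxmat}. An alternative route --- extracting the recursive ``formula'' for $T_{ab}$ from the bialgebra relations of Theorem~\ref{thm:schoberbialgebra} and Remark~\ref{rem:equivalent}, then checking directly via Theorem~\ref{thm:main} that the shifted Rickard complexes satisfy the same relations --- is conceptually cleaner, but would require first establishing those bialgebra relations for the Soergel cube without invoking twist invertibility.
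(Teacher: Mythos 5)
There is a genuine gap in your inductive step. You claim that, under the recursion of Definition~\ref{defi:bifactorizationcube}, passing to Beck--Chevalley cubes and total fibers turns $T_{ab}$ into a single composite $(\text{merge})\circ(\text{whiskered lower twist})\circ(\text{split})$ up to a shift. This is false: the Beck--Chevalley defect of $Q(ab,ba)$ is the total fiber of a genuinely higher--dimensional cube, and its relation to lower twists is that of an \emph{iterated extension}, not a single composite. Concretely, Theorem~\ref{thm:schoberbialgebra} (see \eqref{eq:twistbialgebra}) exhibits $\substack{ab\\n+1\\ba}$ as a convolution whose associated graded pieces are \emph{all} the composites $\substack{ikjl\\ba}\circ(\id_i\otimes T_{jk}\otimes\id_l)\circ\substack{ab\\ijkl}$, with $T_{ab}$ appearing only as the last piece; one must ``solve'' this exact complex for $T_{ab}$, not read it off as one term. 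The case $ab=22$ already kills the proposed shortcut: by Examples~\ref{exa:a3schober}(4)(b) and \ref{exa:sbimaa1}, $T_{22}$ is the total fiber of a square whose corners include $\substack{22\\4\\22}$ and $\id_{22}$, and identifying it with $\qdeg^{4}{}_{22}\Rick_{22}$ requires the square switch isomorphisms of Proposition~\ref{prop:squareswitch} \emph{and} cancellation against the bottom and right corners. A merge$\circ T_{11}\circ$split composite is a two--term complex whose $K$-theory class differs from that of the three--term complex $\qdeg^{4}{}_{22}\Rick_{22}$ (the corners $\substack{22\\4\\22}$ and $\id_{22}$ contribute nontrivially), so your claimed equivalence fails even after decategorification. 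Your final step also leans on \eqref{eqn:MCSred} in a way it cannot support: for $(cd)=(ba)$ that equivalence is just the statement that the unshifted Rickard complex is the crossing, and the nontrivial relation between merge--crossing--split diagrams and (shifted) Rickard complexes is exactly the convolution statement of Theorem~\ref{thm:main}, again with several terms rather than one.

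Ironically, the route you set aside as an ``alternative'' is the paper's actual proof, and your circularity worry about it does not materialize. Theorem~\ref{thm:main} is a statement purely about Rickard complexes, proved with the Koszul-complex and filtration technology of \cite{HRW1,HRW2}, with no appeal to the schober axioms or to twist invertibility. The identification then proceeds by a simultaneous induction on Propositions~\ref{prop:highertwistRickard} and \ref{prop:defectzero}: at each level, the recursion scheme of Theorem~\ref{thm:schoberbialgebra} and Remark~\ref{rem:equivalent} only invokes the lower--level twist identifications and defect vanishing (the exact complexes express composites through the top vertex as extensions involving lower twists, without using invertibility of $T_{ab}$ itself, which comes out as a consequence), and the base cases are supplied by Examples~\ref{exa:sbima1}, \ref{exa:sbima2}, \ref{exa:sbimaa1}. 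If you want to salvage your direct approach, you would have to replace ``single composite'' by ``filtered object with graded pieces the $\IMCS$-type terms'' and then match that filtration with the one on the Koszul side --- at which point you have reproduced the paper's argument.
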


\begin{prop}
    \label{prop:defectzero}
    For every pair of compositions $ab$, $cd$ of $n$ with $a \ne d$,
            the Beck--Chevalley defect $R_{ab,cd}$ associated to the pullback
            of the Soergel cube \eqref{eq:stablecube} to the bifactorization cube $Q(ab,cd)$ is the zero functor.
\end{prop}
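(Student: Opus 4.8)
The plan is to verify the \textbf{Defect vanishing} axiom for the Soergel cube \eqref{eq:stablecube} directly, in parallel with the twist computation of Proposition~\ref{prop:highertwistRickard}, by identifying the Beck--Chevalley defect $R_{ab,cd}$ with an explicit bounded complex of singular Soergel bimodules and showing that this complex is contractible. Since Adjunctability has already been observed, $R_{ab,cd}$ is a well-defined exact functor $\X_{ab} \to \X_{cd}$, given by (derived) tensoring with such a complex. By Definition~\ref{defi:bifactorizationcube}(6) and the fact that transposing the relevant biCartesian fibration exchanges $Q(ab,cd)$ with $Q(cd,ab)$ and makes the total fiber of the Beck--Chevalley cube vanish iff that of its transpose does — while sending the hypothesis $a\ne d$ to $c\ne b$, which is the same condition — we may assume $a\ge c$ (equivalently $b\le d$), i.e.\ the regime in which the recursion of Definition~\ref{defi:bifactorizationcube} is phrased. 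Exactly as in the proof of Theorem~\ref{thm:schoberbialgebra} (cf.\ Example~\ref{exa:43-25}), the recursive structure of $Q(ab,cd)$ produces a canonical filtration of the Beck--Chevalley cube, and via the Koszul-duality equivalence of Proposition~\ref{prop:koszul} one extracts a canonical exact complex
\begin{equation*}
    \substack{ab\\n\\cd} \to \cdots \to \substack{ikjl\\cd}\circ(\id_i\otimes T_{jk}\otimes\id_l)\circ\substack{ab\\ijkl}[m] \to \cdots \to R_{ab,cd}[\mathrm{top}],
\end{equation*}
whose positive terms other than the last are indexed by tuples $(i,j,k,l)$ with $i+j=a$, $k+l=b$, $i+k=c$, $j+l=d$. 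Hence it suffices to show that, for $a\ne d$, the merge-then-split composite $\substack{ab\\n\\cd}$ is already the iterated extension of these merge--crossing--split terms alone, i.e.\ that the truncated complex (dropping $R_{ab,cd}[\mathrm{top}]$) is exact.

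This last statement is a generalized skein relation for singular Soergel bimodules, which I would prove with the technology of \cite{HRW1,HRW2} — the same machinery underlying Proposition~\ref{prop:highertwistRickard}. First I would resolve the big merge $\substack{n\\cd}$ and the big split $\substack{ab\\n}$ into iterated elementary merges and splits, then repeatedly apply the square switch isomorphism (Proposition~\ref{prop:squareswitch}), the fork-slide relation \eqref{eq:forkslide}, and the twist-zipper relation \eqref{eq:twistzipper}. This reduces, by induction on $n$ — the recursion of Definition~\ref{defi:bifactorizationcube} being matched step-by-step to the recursive build-up of the complexes ${}_{cd}\MCS_{ab}$ and of the exploded web $\Expl_n$ — to the two atomic contractibility statements available to us: the complex of digons of Corollary~\ref{cor:digoncomplex} and the identification $\Expl_n\simeq\qdeg^{n(n-1)}\id_n$ of Proposition~\ref{prop:expl}, which between them supply the differentials and coherent null-homotopies exhibiting $\substack{ab\\n\\cd}$ as the required extension. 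The hypothesis $a\ne d$ enters precisely at the inductive step that isolates the twist case: the index $(i,j,k,l)=(0,a,b,0)$, whose associated term is a shift of the invertible twist $T_{ab}=\qdeg^{ab}{}_{ba}\Rick_{ab}$ — the leftover that genuinely obstructs exactness when $(ab,cd)=(ab,ba)$ — is admissible only when $a=d$ (it forces $i=l=0$, hence $a=d$ and $b=c$); for $a\ne d$ no such term occurs, and every would-be obstruction collapses, via the square switch and fork-slide relations, to a complex of digons and is therefore contractible by Corollary~\ref{cor:digoncomplex}. The base cases of the recursion (items (1) and (2) of Definition~\ref{defi:bifactorizationcube}) are themselves twist cases, and the auxiliary $\{0,1\}$- and $\{0\}$-factors appearing in items (3)--(5) are absorbed using fork-sliding, which makes the far-commutativity of the Soergel cube (already verified) concrete enough to see that the corresponding comparison maps are equivalences.

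The main obstacle will be the combinatorial bookkeeping: matching the recursively defined Beck--Chevalley cube over $Q(ab,cd)$ with the explicit merge--crossing--split filtration, and verifying that the surviving comparison maps are \emph{exactly} the differentials produced by $\Expl_n$ rather than merely homotopic to them. Closely related is the need to keep the $\qdeg$-grading shifts on the nose, so that the truncated complex closes up without a residual shifted copy of the identity — this is the delicate point, and it is what dictates the precise normalizations chosen for ${}_{cd}\MCS_{ab}$, for $\Expl_n$, and for the shifted Rickard complexes.
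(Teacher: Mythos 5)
Your overall reduction is the same as the paper's: Propositions~\ref{prop:highertwistRickard} and~\ref{prop:defectzero} are proved simultaneously, by induction on $n$ through the recursion scheme of Theorem~\ref{thm:schoberbialgebra}/Remark~\ref{rem:equivalent}, with the Beck--Chevalley cube over $Q(ab,cd)$ filtered so that its associated graded pieces are the merge--crossing--split terms (lower twists identified with shifted Rickard complexes by the inductive hypothesis, starting from Examples~\ref{exa:sbima1}--\ref{exa:sbimaa1}); defect vanishing for $a\ne d$ then reduces to the categorified graded bialgebra relation \eqref{eq:compat}, and your observation that the bare-twist index $(i,j,k,l)=(0,a,b,0)$ can only occur when $cd=ba$ is exactly the point where the hypothesis $a\ne d$ enters. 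Up to there you are aligned with the paper.

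The genuine gap is in the step that carries essentially all of the content: the generalized skein relation, i.e.\ Theorem~\ref{thm:main}. The paper proves it not by diagrammatic reductions but by applying the Koszul functor $K$ to the shifted Rickard complex ${}_{cd}\sRick_{ab}$ (which is contractible), changing to the $\zeta$-basis, extracting the $l=0$ subcomplex ${}_{cd}\KCmin^{l=0}_{ab}$, showing it retracts onto the merge-then-split bimodule $W_b$ (Corollary~\ref{cor:filt}), identifying the subquotients of its $s$-filtration with the terms ${}_{cd}\IMCS^s_{ab}$ (Propositions~\ref{prop:KCdiffs} and~\ref{prop:IMCS}), and then invoking homological perturbation to produce the twist $D$ strictly decreasing in $s$. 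Your proposed substitute --- iterating the square switch of Proposition~\ref{prop:squareswitch}, fork-sliding \eqref{eq:forkslide} and the zipper relations \eqref{eq:twistzipper}, and reducing by induction to Corollary~\ref{cor:digoncomplex} and Proposition~\ref{prop:expl} --- does not deliver this: the square switch is an isomorphism of \emph{bimodules}, so at best it matches the chain groups of the two sides of \eqref{eq:compat}, while the proposition you need is a statement about a convolution, i.e.\ about differentials and a coherent filtration, which these object-level isomorphisms do not control (this is precisely what you defer in your ``main obstacle'' paragraph, together with the on-the-nose grading bookkeeping). Moreover, Corollary~\ref{cor:digoncomplex} and Proposition~\ref{prop:expl} are used in the paper only to establish cotwist invertibility (the framing), not defect vanishing, and there is no evident induction turning them into \eqref{eq:compat}; likewise \eqref{eq:forkslide} concerns honest crossings, not the merge--split webs you need to decompose. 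A smaller point to make explicit: Theorem~\ref{thm:schoberbialgebra} is stated for schobers, so to use its filtration here without circularity you must run the porism-style variant with only the lower-degree schober conditions as input; carrying $R_{ab,cd}$ as the final term, as you do, is the right fix, but it should be said rather than cited.
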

The proof of Propositions~\ref{prop:highertwistRickard} and \ref{prop:defectzero} occupies \S\ref{sec:Soergelschoberbialgebra} and \S\ref{sec:proofbialgebra}. 

\begin{example}
    After a global grading shift, the equivalences \eqref{eq:twistzipper}
    together with $T_{ab}\simeq \qdeg^{ab}{}_{ba}\Rick_{ab}$ and $T_{n}\simeq \qdeg^{n(n-1)}\tdeg^{n-1}\id_n$ yield
    \[
    \mu^{ba}_{a+b}\circ T_{ab} \circ (T_a\otimes T_b) \simeq T_{a+b} \circ \mu^{ab}_{a+b} 
        \quad, \quad
        (T_a\otimes T_b)\circ T_{ba}\circ \Delta_{ba}^{a+b}  \simeq \Delta_{ab}^{a+b} \circ T_{a+b} 
    \]
\end{example}
\begin{remark}
The braiding on complexes of Soergel bimodules from \cite{LMGRSW} is generated
by a Rickard complex normalized by a different shift convention, namely
$\qdeg^{-1}{}_{11}\Rick_{11}\simeq \qdeg^{-2} T_{11}$. With this convention, the
braiding descends to the standard symmetric braiding upon proceeding to derived
categories, see \cite[Remark 2.2.6]{LMGRSW}. For singular Soergel
bimodules, one would expend the braiding to be generated by the shifted Rickard complexes
$\qdeg^{-ab}{}_{ba}\Rick_{ab}\simeq \qdeg^{-2ab} T_{ab}$.
\end{remark}

\begin{example}
    \label{exa:sbima1}
   We check that the Soergel cube $\X_{2}$ from
   Construction~\ref{eq:soergelcube} defines a framed $\AA_1$-schober in the
   sense of Definitions~\ref{defi:a1schober} and~\ref{defi:cotwist}, indeed a
   spherical adjunction~\ref{exa:framed}. The only edge in $\X_{2}$ is labelled
   by the induction bimodule $\!\!\!\!\phantom{()}_{R_{11}}(R_{11})_{R_{2}}$
   with right adjoint given by the restriction bimodule
   $\!\!\!\!\phantom{()}_{R_{2}}(R_{11})_{R_{11}}$, so (A1.1) is satisfied. 
   
   The corresponding twist, the fiber
   of the counit of the adjunction, is given by tensoring with
   \[
   \left( \qdeg^1 \uwave{
    \begin{tikzpicture}[smallnodes,anchorbase, scale=.7]
        \Indc{0} 
        \Resc{1} 
        \node[xshift=-3pt] at (0,.3) {$1$}; 
        \node[xshift=-3pt] at (0,.6) {$1$};
        \node[xshift=3pt] at (2,.3) {$1$}; 
        \node[xshift=3pt] at (2,.6) {$1$};
    \end{tikzpicture} 
   }
        \to	
        \begin{tikzpicture}[smallnodes,scale=.7,anchorbase]
            \draw[very thick] (-1,.66) to (1,.66);
            \draw[very thick] (-1,.33) to (1,.33) ;
            \node[xshift=-3pt] at (-1,.3) {$1$}; 
            \node[xshift=-3pt] at (-1,.6) {$1$};
            \node[xshift=3pt] at (1,.3) {$1$}; 
            \node[xshift=3pt] at (1,.6) {$1$};
            \end{tikzpicture} 
            \right)
            \simeq 
            \qdeg^{1}{}_{11}\Rick_{11}
   \]
   which is a shift of a Rickard complex, in this case also known as a Rouquier
   complex, and thus defines an equivalence (A1.2). The cofiber of the unit of the adjunction, on the other hand, is
   \[
    \left( \begin{tikzpicture}[smallnodes,scale=.7]
        \draw[very thick] (-1,0) node[left,xshift=-1pt]{$2$} to (1,0) node[right,xshift=1pt]{$2$};
        \end{tikzpicture}    
         \to	
         \qdeg^1 
         \uwave{
            \begin{tikzpicture}[smallnodes,anchorbase, scale=.7]
                \Indc{1} 
                \Resc{0} 
                \node[xshift=-3pt] at (0,.5) {$2$}; 
                \node[xshift=3pt] at (2,.5) {$2$}; 
            \end{tikzpicture} 
            }
            \right)
             \simeq 
             \qdeg^{2} \id_2
    \]
    a shift of the identity as easy special case of Proposition~\ref{prop:expl},
    and thus an equivalence.
\end{example}

\begin{example}
    \label{exa:sbima2}
    Considering Soergel cube $\X_{3}$ from
    Construction~\ref{eq:soergelcube}, we find that it defines a framed $\AA_2$-schober.
The edges of the cube correspond to the induction bimodules
\[
I:=\begin{tikzpicture}[smallnodes,anchorbase, scale=.7]
    \Indc{0} 
    \node[xshift=-3pt] at (0,.33) {$2$}; 
    \node[xshift=-3pt] at (0,.66) {$1$};
\end{tikzpicture}
\quad,\quad	
H:=\begin{tikzpicture}[smallnodes,anchorbase, scale=.7]
    \Indc{0} 
    \node[xshift=-3pt] at (0,.33) {$1$}; 
    \node[xshift=-3pt] at (0,.66) {$2$};
\end{tikzpicture}	
\quad,\quad	
F:=\begin{tikzpicture}[smallnodes,anchorbase, scale=.7]
    \Inda{0}
    \node[xshift=-3pt] at (0,0) {$1$}; 
    \node[xshift=-3pt] at (0,.5) {$1$};
    \node[xshift=-3pt] at (0,1) {$1$};
\end{tikzpicture}
\quad,\quad	
G:=\begin{tikzpicture}[smallnodes,anchorbase, scale=.7]
    \Indb{0}
    \node[xshift=-3pt] at (0,0) {$1$}; 
    \node[xshift=-3pt] at (0,.5) {$1$};
    \node[xshift=-3pt] at (0,1) {$1$};
\end{tikzpicture}
\]
The corresponding restriction bimodules are right adjoint (A2.1), $F$ and $G$
each define an $\AA_1$-schober as in Example~\ref{exa:sbima1} (A2.2). The fibers of the Beck-Chevalley map $H I^* \to G^* F$ and $I H^* \to F^* G$ agree on the nose with the shifted Rickard complexes $\qdeg{2}{}_{12}C_{21}$ and $\qdeg{2}{}_{21}C_{12}$, respectively, hence are equivalences (A2.3-4). Finally, for (A2.5-6) we consider the Beck-Chevalley square
\[
        \begin{tikzcd}
            \qdeg^{2} \begin{tikzpicture}[smallnodes,anchorbase, scale=.7]
                \Indc{0} 
                \Resc{1} 
                \node[xshift=-3pt] at (0,.3) {$1$}; 
                \node[xshift=-3pt] at (0,.6) {$2$};
                \node[xshift=3pt] at (2,.3) {$1$}; 
                \node[xshift=3pt] at (2,.6) {$2$};
            \end{tikzpicture} 
            \ar{r} \ar{d} & 
            \qdeg^{2} \begin{tikzpicture}[smallnodes,anchorbase, scale=.7]
                \Indb{3} 
                \Resa{2}
                \Inda{1}
                \Resb{0} 
                \node[xshift=-3pt] at (0,.3) {$1$}; 
                \node[xshift=-3pt] at (0,.6) {$2$};
                \node[xshift=3pt] at (4,.3) {$1$}; 
                \node[xshift=3pt] at (4,.6) {$2$};
            \end{tikzpicture}
            \ar{d} 
            \\
            \begin{tikzpicture}[smallnodes,scale=.7,anchorbase]
                \draw[very thick] (-1,.66) to (1,.66);
                \draw[very thick] (-1,.33) to (1,.33) ;
                \node[xshift=-3pt] at (-1,.3) {$1$}; 
                \node[xshift=-3pt] at (-1,.6) {$2$};
                \node[xshift=3pt] at (1,.3) {$1$}; 
                \node[xshift=3pt] at (1,.6) {$2$};
                \end{tikzpicture} 
            \ar{r} & 
            \qdeg^{2}\begin{tikzpicture}[smallnodes,anchorbase, scale=.7]
                \Indb{1} 
                \Resb{0} 
                \node[xshift=-3pt] at (0,.3) {$1$}; 
                \node[xshift=-3pt] at (0,.6) {$2$};
                \node[xshift=3pt] at (2,.3) {$1$}; 
                \node[xshift=3pt] at (2,.6) {$2$};
            \end{tikzpicture}
        \end{tikzcd}
\]
and its symmetric analog. These can be shown to be biCartesian by inspecting the
morphisms implementing the square switch relations from 
Proposition~\ref{prop:squareswitch}. The cotwists are seen to be shifted
identity morphisms by Proposition~\ref{prop:expl}.
\end{example}

\begin{example}
    \label{exa:sbimaa1}
Exactly as in Example~\ref{exa:sbima2} it is clear from Definition~\ref{defi:bifactorizationcube}.2 that the higher twist
functors $T_{a1}$ for the Soergel schobers are manifestly (shifted) Rickard complexes:
\[
    \left( \qdeg^a \uwave{
        \begin{tikzpicture}[smallnodes,anchorbase, scale=.7]
        \Indc{0} 
        \Resc{1} 
        \node[xshift=-3pt] at (0,.3) {$1$}; 
        \node[xshift=-3pt] at (0,.6) {$a$};
        \node[xshift=3pt] at (2,.3) {$a$}; 
        \node[xshift=3pt] at (2,.6) {$1$};
    \end{tikzpicture} 
    }
         \to	
         \qdeg^{a-1}
         \begin{tikzpicture}[smallnodes,anchorbase, scale=.7]
            \Inda{1}
            \Resb{0} 
            \node[xshift=-3pt] at (0,.3) {$1$}; 
            \node[xshift=-3pt] at (0,.6) {$a$};
            \node[xshift=3pt] at (2,.3) {$a$}; 
            \node[xshift=3pt] at (2,.6) {$1$};
        \end{tikzpicture}
        \right)
             \simeq 
             \qdeg^{a}{}_{1a}\Rick_{a1}
    \]
The first Rickard complex of length greater than one appears as shift of the twist $T_{22}$. Comparing with Example~\ref{exa:a3schober}.4.(b), we consider the square:
\[
        \begin{tikzcd}
            \qdeg^{4} \begin{tikzpicture}[smallnodes,anchorbase, scale=.7]
                \draw[very thick] (0,.25) to [out=0,in=225] (0.5,.5) to (1,.5);
                \draw[very thick] (0,.75) to [out=0,in=135] (0.5,0.5);
                \draw[very thick] (2,.25) to [out=180,in=315] (0.5+1,.5) to (0+1,.5);
                \draw[very thick] (2,.75) to [out=180,in=45] (0.5+1,0.5);
                \node[xshift=-3pt] at (0,.2) {$1$}; 
                \node[xshift=-3pt] at (0,.7) {$2$};
                \node[xshift=3pt] at (2,.2) {$1$}; 
                \node[xshift=3pt] at (2,.7) {$2$};
            \end{tikzpicture} 
            \ar{r} \ar{d} & 
            \qdeg^{3} \begin{tikzpicture}[smallnodes,anchorbase, scale=.7]
                \draw[very thick] (1.5,.33) to (1,.33) to [out=180,in=315] (0.5,.5) to (0,.5);
                \draw[very thick] (1.5,.75) to (1,.75) to [out=225,in=45] (0.5,0.5);
                \draw[very thick] (1,.75) to [out=135,in=315] (0.5,1) to (0,1);
                \draw[very thick] (1.5,1.16) to (1,1.16) to [out=180,in=45] (0.5,1);
                \draw[very thick] (0+1.5,.33) to [out=0,in=225] (0.5+1.5,.5) to (1+1.5,.5);
                \draw[very thick] (0+1.5,.75) to [out=315,in=135] (0.5+1.5,0.5);
                \draw[very thick] (0+1.5,.75) to [out=45,in=225] (0.5+1.5,1) to (1+1.5,1);
                \draw[very thick] (0+1.5,1.16) to [out=0,in=135] (0.5+1.5,1);
                \node[xshift=-3pt] at (0,.4) {$2$}; 
                \node[xshift=-3pt] at (0,.9) {$2$};
                \node[xshift=3pt] at (2.5,.4) {$2$}; 
                \node[xshift=3pt] at (2.5,.9) {$2$};
            \end{tikzpicture}
            \ar{d} 
            \\
            \begin{tikzpicture}[smallnodes,scale=.7,anchorbase]
                \draw[very thick] (-1,.25) to (1,.25);
                \draw[very thick] (-1,.75) to (1,.75) ;
                \node[xshift=-3pt] at (-1,.2) {$2$}; 
                \node[xshift=-3pt] at (-1,.7) {$2$};
                \node[xshift=3pt] at (1,.2) {$2$}; 
                \node[xshift=3pt] at (1,.7) {$2$};
                \end{tikzpicture} 
            \ar{r} & 
            \qdeg^{2}\begin{tikzpicture}[smallnodes,anchorbase, scale=.7]
                \Indc{1} 
                \Resc{0} 
                \node[xshift=-3pt] at (0,.4) {$2$}; 
                \node[xshift=3pt] at (2,.4) {$2$}; 
                \begin{scope}[shift={(0,.5)}]
                \Indc{1} 
                \Resc{0} 
                \node[xshift=-3pt] at (0,.4) {$2$}; 
                \node[xshift=3pt] at (2,.4) {$2$};
                \end{scope} 
            \end{tikzpicture}
        \end{tikzcd}
\]
The total fiber is equivalent to $\qdeg^{4}{}_{22}\Rick_{22}$. To see this, one can apply the square
switch relations from Proposition~\ref{prop:squareswitch} in the right column and then contract terms along the bottom horizontal and right vertical arrows.
\end{example}

In the following section we find a more convenient way of identifying the higher
twists with shifted Rickard complexes.

\subsection{Categorified graded bialgebra relations for Soergel bimodules}
\label{sec:Soergelschoberbialgebra}

We prove Propositions~\ref{prop:highertwistRickard} and ~\ref{prop:defectzero}
by employing the recursion scheme from Theorem~\ref{thm:schoberbialgebra}, see
also Remark~\ref{rem:equivalent}. We first show that Rickard complexes satisfy
the same type of categorified graded bialgebra relations as the higher twists
$T_{ab}$. Starting from the base cases in Examples~\ref{exa:sbima1},
\ref{exa:sbima2}, \ref{exa:sbimaa1}, induction then establishes the equivalences
$T_{ab}\simeq \qdeg^{ab}{}_{ba}\Rick_{ab}$ and $R_{ab,cd}\simeq 0$ for $a\neq d$.

\begin{thm}[Categorified graded bialgebra structure]
	\label{thm:main}
	Let $a,b,c,d$ be non-negative integers, $a+b=c+d$ and (wlog) $b\leq a,c,d$. Then
	there exists a homotopy equivalence of twisted complexes of singular Soergel
	bimodules
	\begin{equation}
	\label{eq:compat}
	\tw_{D}\left(\bigoplus_{s=0}^b 
	\qdeg^{-s(s+a-d)}
	\left\llbracket
	\begin{tikzpicture}[scale=.4,smallnodes,anchorbase,rotate=270]
	\draw[very thick] (1,-1) to [out=150,in=330] (0,1) to (0,2) node[right=-2pt]{$b$}; 
	\draw[line width=5pt,color=white] (0,-2) to (0,-1) to [out=30,in=210] (1,1);
	\draw[very thick] (0,-2) node[left=-2pt]{$d$} to (0,-1) to [out=30,in=210] (1,1);
	\draw[very thick] (1,1) to (1,2) node[right=-2pt]{$a$};
	\draw[very thick] (1,-2) node[left=-2pt]{$c$} to (1,-1); 
	\draw[very thick] (1,-1) to [out=30,in=330]  (1,1); 
	\draw[very thick] (0,-1) to [out=150,in=210]node[above=-1pt]{$s$} (0,1); 
	\end{tikzpicture}
	\right\rrbracket
	\right) 
	\simeq 
	\left\llbracket
	\begin{tikzpicture}[scale=.4,smallnodes,anchorbase,rotate=270]
		\draw[very thick] (.5,.5) to [out=150,in=270] (0,1) to (0,2) node[right=-2pt]{$b$}; 
		\draw[very thick] (0,-2) node[left=-2pt]{$d$} to (0,-1) to [out=90,in=210] (.5,-.5) to (.5,.5);
		\draw[very thick] (.5,.5) to [out=30,in=270]  (1,1) to (1,2) node[right=-2pt]{$a$};
		\draw[very thick] (1,-2) node[left=-2pt]{$c$} to (1,-1) to [out=90,in=330] (.5,-.5); 
		\end{tikzpicture}
		\right\rrbracket
	\end{equation}
	where the twist $D$ is strictly decreasing in $s$. 
\end{thm}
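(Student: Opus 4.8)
The plan is to establish the homotopy equivalence \eqref{eq:compat} by an explicit computation in the dg category $\Ch^b(\sSBim_N)$ of chain complexes of singular Soergel bimodules, where $N:=a+b=c+d$, using the generalized skein calculus for Rickard complexes of \cite{HRW1,HRW2}. Expanding every bracketed crossing $\llbracket\,\cdot\,\rrbracket$ into its defining complex of singular Bott--Samelson bimodules (webs), both sides of \eqref{eq:compat} become twisted complexes of webs, and it suffices to simplify the left-hand one, by skein moves and Gaussian elimination, until the single crossing-free merge--split web on the right is all that remains. The ingredients are: the MCS reduction \eqref{eqn:MCSred} identifying ${}_{cd}\MCS_{ab}\simeq{}_{cd}\sRick_{ab}$; the fork-slide relation \eqref{eq:forkslide}; the twist/zipper relations \eqref{eq:twistzipper}; the square switch isomorphism of Proposition~\ref{prop:squareswitch}, together with its degenerate digon case \eqref{eq:digon} and the complex-level refinement Corollary~\ref{cor:digoncomplex}; and the explosion equivalence Proposition~\ref{prop:expl}. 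By the evident symmetries of \eqref{eq:compat} (mirror reflection together with passage to transposes, using the invertibility of Rickard complexes from Remark~\ref{rem:Rickardbraidandforkslide}) we may assume $b\le a,c,d$, so the sum on the left has exactly $b+1$ terms; one may additionally induct on $b$, the case $b=0$ and the ordinary skein relation for Rouquier-type crossings at $b=1$ being the base cases of Examples~\ref{exa:sbima1}--\ref{exa:sbimaa1}.

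First I would pin down the individual summands. Applying fork-slide \eqref{eq:forkslide} to transport the trivalent vertices adjacent to the internal crossing, the $s$-th summand on the left is, up to the recorded shift $\qdeg^{-s(s+a-d)}$, the web obtained from ${}_{c,d-s}\MCS_{a,b-s}\simeq{}_{c,d-s}\sRick_{a,b-s}$ by adjoining a width-$s$ strand that is split off the width-$d$ input and re-merged into the width-$b$ output. Taking the homological-degree-$0$ part of this summand (the $0$-th term of the embedded Rickard complex) yields, after one application of the square switch \eqref{eq:digon}, a shift of the singular Bott--Samelson web attached to the composition $(s,b-s,c-s,d-b+s)$ of $N$. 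These are exactly the terms of the expected decomposition of the merge--split bimodule $R_{ba}\otimes_{R_N}R_{cd}$ into shifted partial-invariant rings indexed by the $b+1$ double cosets $(S_b\times S_a)\backslash S_N/(S_c\times S_d)$, realized diagrammatically by iterating \eqref{eq:digon} (cf.\ \cite[Appendix A]{HRW1}); so, via this decomposition, the right-hand side of \eqref{eq:compat} is identified with the homological-degree-$0$ part of the left-hand twisted complex.

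It then remains to show that the strictly positive homological parts of the left-hand twisted complex are acyclic and can be cancelled against it. There are two sources of differential: the internal Rickard differentials $\chi_0^+$ (Figure~\ref{fig:diffslices}) within each summand, and the components of the twist $D$ between summands of adjacent index $s$. Normalizing all occurring webs by the square switch (Proposition~\ref{prop:squareswitch}), one identifies every component of $D$, up to a unit scalar, with the canonical bimodule map built from adjunction (co)units and (co)associators, and observes that each such component strictly shrinks one of the two strands crossing in the middle, hence strictly lowers $s$ --- this is precisely the hypothesis that $D$ is strictly decreasing in $s$. A telescoping Gaussian elimination --- cancelling, in order of increasing $s$, the acyclic Rickard subcomplexes against these strictly triangular components of $D$ --- then collapses the entire twisted complex onto its surviving homological-degree-$0$ term, which is the crossing-free merge--split web on the right of \eqref{eq:compat}. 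The grading shifts $\qdeg^{-s(s+a-d)}$ are forced by this cancellation and coincide with those built into the definition of the shifted Rickard complex, which (together with Proposition~\ref{prop:expl} for the cotwists) closes the induction and feeds into the recursion scheme of Theorem~\ref{thm:schoberbialgebra} used in the remainder of this section.

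The main obstacle is the content of the preceding paragraph: making precise the component maps of the twist $D$ at the level of webs, and verifying the strict triangularity that legitimizes the telescoping Gaussian elimination. This is a combinatorial analysis of how the square switch isomorphisms interact with the Rickard differentials $\chi_0^+$ --- the same mechanism that underlies \cite[Propositions 2.25, 2.27 and 2.31]{HRW1} and ultimately the Sto\v{s}i\'c formula \cite[Theorem 5.6]{KLMS} --- and it is this bookkeeping, rather than the formal collapse, that occupies the bulk of the argument.
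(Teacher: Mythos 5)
There is a genuine gap, and it sits at the centre of your argument rather than in the deferred ``bookkeeping''. Your plan rests on the claim that the right-hand side of \eqref{eq:compat} can be identified with the homological-degree-zero part of the left-hand twisted complex, because the merge--split bimodule supposedly decomposes into shifted partial-invariant webs indexed by the $b+1$ double cosets, ``realized diagrammatically by iterating \eqref{eq:digon}''. This is false: the digon relation \eqref{eq:digon} simplifies a split-then-merge bubble, not the merge-then-split composite, and the merge--split bimodule admits only a \emph{filtration} by such pieces, not a direct sum decomposition. Already for $a=b=c=d=1$ the right-hand side is the indecomposable bimodule $R_{11}\otimes_{R_2}R_{11}$, while the degree-zero part of the left-hand side is that bimodule \emph{plus} a shifted copy of $R_{11}$ coming from the $s=1$ summand; these are not isomorphic. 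Consequently your follow-up claim that ``the strictly positive homological parts of the left-hand twisted complex are acyclic'' is also wrong: in this example the positive part is a single shifted $R_{11}$ in degree one, and it is killed only by a component of $D$ mapping the degree-zero term of the $s=1$ summand onto it. The cancellation in \eqref{eq:compat} is intrinsically a cancellation \emph{across} the $s$-summands mediated by $D$, and the resulting object is a nontrivial iterated extension of the degree-zero pieces, not their direct sum. Since the theorem is an existence statement for $D$, the step you postpone --- exhibiting components of $D$ (including possible higher components) satisfying the Maurer--Cartan equation and then proving the collapse --- is not bookkeeping on top of a correct skeleton; with the skeleton as stated, the telescoping Gaussian elimination has nothing correct to telescope onto.

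For contrast, the paper sidesteps any explicit construction of $D$: it forms the (contractible) Koszul complex ${}_{cd}\KCmin_{ab}=K({}_{cd}\Rick_{ab})$ on exterior variables $\xi_1,\dots,\xi_b$, changes basis to the $\zeta$-variables so that the differentials become the maps $\chi^+_m$ (Proposition~\ref{prop:KCdiffs}), and observes an $l$-filtration whose $l=0$ part is a subcomplex that retracts onto the merge--split web $W_b$ (Corollary~\ref{cor:filt}) while simultaneously carrying an $s$-filtration whose subquotients are exactly the summands ${}_{cd}\IMCS^s_{ab}$ of the left-hand side (Proposition~\ref{prop:IMCS}, via the dg functor $\I^{(s)}$ and \eqref{eqn:MCSred}). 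Homological perturbation applied to this filtered subcomplex then \emph{produces} a twist $D$, automatically strictly decreasing in $s$, together with the homotopy equivalence to $W_b$. If you want to salvage a diagrammatic route, you would have to replace your direct-sum claim by a filtration statement (the standard filtration of the merge--split bimodule by double cosets) and then construct $D$ coherently --- at which point you are essentially reproducing the paper's filtration-plus-perturbation argument; the square switch of Proposition~\ref{prop:squareswitch} alone will not give you the non-split extension data.
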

\begin{rem}
    The grading shifts on the left hand side of \eqref{eq:compat} are an effect
    of the shift convention \eqref{eq:merge} for the merge vertex. There would
    be no grading shifts when working with unshifted restriction bimodules.
    \end{rem}

The proof of Theorem~\ref{thm:main} requires two types of dg functors that we now describe.

\begin{definition}
    Let $a,b,c,d,s$ be non-negative integers and $a+b=c+d$. We denote by $\I^{(s)}\colon {}_{cd}\CS_{ab} \rightarrow {}_{c(d+s)}\CS_{a(b+s)}$ 
   the dg functor defined in diagrammatic terms by:
    \[
    \I^{(s)}(X):=
    \begin{tikzpicture}[scale=.5,smallnodes,rotate=90,anchorbase]
        \draw[very thick] (1,2.25) to (1,3) node[left]{$d{+}s$};
        \draw[very thick] (1,-3) node[right]{$b{+}s$} to (1,-2.25); 
        \draw[very thick] (1,-2.25) to [out=30, in=270] (1.75,-1.6) 
            to (1.75,0) node[below,yshift=1pt]{$s$}to (1.75,1.6)  to  [out=90,in=330] (1,2.25); 
        \node[yshift=-2pt] at (0,0) {\normalsize$X$};
        \draw[very thick] (.75,1.6) rectangle (-.75,-1.6);
        \draw[very thick] (1,-2.25) to [out=150,in=270] 
            node[right,xshift=-1pt,yshift=-1pt]{$b$} (.25,-1.6);
        \draw[very thick] (1,2.25) to [out=210,in=90] 
            node[left,xshift=1pt,yshift=-1pt]{$d$} (.25,1.6);
        \draw[very thick] (-.25,3) node[left]{$c$} to (-.25,1.6);
        \draw[very thick] (-.25,-3) node[right]{$a$} to (-.25,-1.6);
    \end{tikzpicture} \, .
    \]
\end{definition}

\begin{example} The terms on the left-hand side of \eqref{eq:compat} arise as
images of complexes ${}_{c(d-s)}\MCS_{a(b-s)}$, illustrated on the right-hand side of \eqref{eqn:MCSred}, under $\I^{(s)}$:
\[{}_{cd}\IMCS^s_{ab} :=   \left\llbracket
\begin{tikzpicture}[scale=.4,smallnodes,anchorbase,rotate=270]
\draw[very thick] (1,-1) to [out=150,in=330] (0,1) to (0,2) node[right=-2pt]{$b$}; 
\draw[line width=5pt,color=white] (0,-2) to (0,-1) to [out=30,in=210] (1,1);
\draw[very thick] (0,-2) node[left=-2pt]{$d$} to (0,-1) to [out=30,in=210] (1,1);
\draw[very thick] (1,1) to (1,2) node[right=-2pt]{$a$};
\draw[very thick] (1,-2) node[left=-2pt]{$c$} to (1,-1); 
\draw[very thick] (1,-1) to [out=30,in=330]  (1,1); 
\draw[very thick] (0,-1) to [out=150,in=210]node[above=-1pt]{$s$} (0,1); 
\end{tikzpicture}
\right\rrbracket
= 
\I^{(s)} \left({}_{c(d-s)}\MCS^s_{a(b-s)}\right)
.\]

\end{example}

In  \cite[Definition 3.1]{HRW1} a dg
functor $K$ was constructed, which takes a complex of singular Soergel bimodules $X\in {}_{ab}\CS_{ab}$ to a Koszul complex associated to the action of certain polynomials. Here we need a trivial variation of this construction.

\begin{definition}\label{def:Koszul cx} 
	For each $X \in {}_{cd}\CS_{ab}$, 
	let $K(X)$ denote the Koszul complex associated to the action of 
	$1,0,\dots,0$ on $X$.
	Explicitly, we consider the bigraded $\Q$-vector space $\largewedge[\xi_1,\dots,\xi_b]$ 
in which the $\xi_i$ are exterior variables with $\wt(\xi_i)=\qdeg^{2i-2b} \tdeg\inv$
and define bimodules
	\[
	K(X) := \tw_{ \id \otimes \xi_b^\ast}\left(X\otimes \largewedge[\xi_1,\dots,\xi_b]\right)  \, .
	\]
	Here, $\xi_i^\ast$ is the endomorphism (in fact, derivation) of
	$\largewedge[\xi_1,\dots,\xi_b]$ with $\wt(\xi_i^{\ast})=\qdeg^{2b-2i} \tdeg^{1}$
	defined by
	\[
	\xi_i^\ast(\xi_i)=1
	\, , \quad
	\xi_i^\ast(\xi_j)=0\quad(i\neq j)
	\, , \quad
	\xi_i^\ast(\eta \nu)=\xi_i^\ast(\eta) \nu + (-1)^{|\eta|} \eta \xi_i^\ast(\nu) \, .
	\]
    As in \cite[Proposition 3.3]{HRW1}, the assignment $X\to K(X)$
	tautologically extends to a dg functor.
	\end{definition}

    In fact, $K(X)$ is contractible for any $X$. Nevertheless, we will extract useful information from this construction by means of filtrations.
	
	\begin{remark}\label{rem:sign}
		Before turning on the Koszul differential we have
		\[
		X\otimes \largewedge[\xi_1,\dots,\xi_b] = \bigoplus_{l=0}^b \bigoplus_{i_1<\cdots<i_l} X\otimes \xi_{i_1}\cdots\xi_{i_l},
		\]
		where each $X\otimes \xi_{i_1}\cdots\xi_{i_l}$ 
		denotes a copy of $X$ (appropriately shifted).
		The usual Koszul sign conventions tell us that the differential on 
		$X\otimes \xi_{i_1}\cdots\xi_{i_l}$ coincides with $\d_X$ \emph{with no sign}, 
		since the monomial in $\xi$'s appears on the right. 
		\end{remark}

        \begin{definition}
            \label{def:KCmin} We set ${}_{cd}\KCmin_{ab}:=K({}_{cd}\Rick_{ab})$.
            \end{definition}
            
            \begin{conv}
                We now establish language for discussing complexes in ${}_{cd}\CS_{ab}$ and their chain
                objects. Consider the bimodules appearing in the shifted Rickard complex ${}_{cd}\sRick_{ab}$
                \[
                W_k := 
                \begin{tikzpicture}[smallnodes,rotate=90,anchorbase,scale=.625]
                    \draw[very thick] (0,.25) to [out=150,in=270] (-.25,1) node[left,xshift=2pt]{$c$};
                    \draw[very thick] (.5,.5) to (.5,1) node[left,xshift=2pt]{$d$};
                    \draw[very thick] (0,.25) to (.5,.5);
                    \draw[very thick] (0,-.25) to (0,.25);
                    \draw[very thick] (.5,-.5) to [out=30,in=330] (.5,.5);
                    \draw[very thick] (0,-.25) to node[right,xshift=-2pt,yshift=-2pt]{$k$} (.5,-.5);
                    \draw[very thick] (.5,-1) node[right,xshift=-2pt]{$b$} to (.5,-.5);
                    \draw[very thick] (-.25,-1)node[right,xshift=-2pt]{$a$} to [out=90,in=210] (0,-.25);
                \end{tikzpicture}
                \]
                for $\max(0,b-d) \leq k \leq b$. Then we have
        \begin{equation}
            {}_{cd}\KCmin_{ab} = \Big(K(W_b)\xrightarrow{\d^H} \qdeg^{a-d+1}\tdeg K(W_{b-1})\xrightarrow{\d^H} 
            \cdots \xrightarrow{\d^H} \qdeg^{b(a-d+1)}\tdeg^b K(W_{0})\Big),
        \end{equation}
        where $\d^H = K(\chi^+_0) \colon K(W_k)\rightarrow K(W_{k-1})$.
        The differential internal to each $K(W_k)$ will be denoted $\d^v$, 
        and referred to as the \emph{vertical differential}.
        The differential $\d^H$ will be referred to as the 
        \emph{horizontal differential}.

                \end{conv}

\subsection{Example}
Before proceeding with the proof of Theorem~\ref{thm:main}, we illustrate the strategy in a prototypical example.
\begin{exa} We consider $a=b=c=d=2$ where
${}_{22}\Rick_{22}={}_{22}\MCS_{22}={}_{22}\MCSmin_{22}$ and diagrammatically
	\begin{equation*}
         \left \llbracket
		\begin{tikzpicture}[rotate=90,scale=.5,smallnodes,anchorbase]
			\draw[very thick] (1,-1) node[right,xshift=-2pt]{$2$} to [out=90,in=270] (0,1);
			\draw[line width=5pt,color=white] (0,-1) to [out=90,in=270] (1,1);
			\draw[very thick] (0,-1) node[right,xshift=-2pt]{$2$} to [out=90,in=270] (1,1);
		\end{tikzpicture}
				\right\rrbracket =
		\left(
		\begin{tikzpicture}[smallnodes,rotate=90,anchorbase,scale=.66]
			\draw[very thick] (0,.25) to [out=150,in=270] (-.25,1) node[left,xshift=2pt]{$2$};
			\draw[very thick] (.5,.5) to (.5,1) node[left,xshift=2pt]{$2$};
			\draw[very thick] (0,.25) to node[left,xshift=2pt,yshift=-1pt]{$2$} (.5,.5);
			\draw[very thick] (0,-.25) to (0,.25);
			\draw[dotted] (.5,-.5) to [out=30,in=330] (.5,.5);
			\draw[very thick] (0,-.25) to node[right,xshift=-2pt,yshift=-1pt]{$2$} (.5,-.5);
			\draw[very thick] (.5,-1) node[right,xshift=-2pt]{$2$} to (.5,-.5);
			\draw[very thick] (-.25,-1)node[right,xshift=-2pt]{$2$} to [out=90,in=210] (0,-.25);
		\end{tikzpicture}
		\to  \qdeg\inv \tdeg
		\begin{tikzpicture}[smallnodes,rotate=90,anchorbase,scale=.66]
			\draw[very thick] (0,.25) to [out=150,in=270] (-.25,1) node[left,xshift=2pt]{$2$};
			\draw[very thick] (.5,.5) to (.5,1) node[left,xshift=2pt]{$2$};
			\draw[very thick] (0,.25) to node[left,xshift=2pt,yshift=-1pt]{$1$} (.5,.5);
			\draw[very thick] (0,-.25) to (0,.25);
			\draw[very thick] (.5,-.5) to [out=30,in=330] (.5,.5);
			\draw[very thick] (0,-.25) to node[right,xshift=-2pt,yshift=-1pt]{$1$} (.5,-.5);
			\draw[very thick] (.5,-1) node[right,xshift=-2pt]{$2$} to (.5,-.5);
			\draw[very thick] (-.25,-1)node[right,xshift=-2pt]{$2$} to [out=90,in=210] (0,-.25);
		\end{tikzpicture}
		\to  \qdeg^{-2} \tdeg^2
		\begin{tikzpicture}[smallnodes,rotate=90,anchorbase,scale=.66]
			\draw[very thick] (-.25,-1) node[right,xshift=-2pt]{$2$} to (-.25,1) node[left,xshift=2pt]{$2$};
			\draw[very thick] (.5,-1) node[right,xshift=-2pt]{$2$} to (.5,1) node[left,xshift=2pt]{$2$};
		\end{tikzpicture}
		\right) \, .
	\end{equation*}
 The Koszul complex $K({}_{22}C_{22})$ has the following schematic form:
 \begin{equation*}
    \begin{tikzpicture}[anchorbase]
        \node[scale=1] at (5,-2.5){$\MCSmin_{2,2} \otimes \wedge$};
        \node[scale=1] at (0,0.5){
    \begin{tikzcd}[row sep=2em,column sep=-1.5em]
        & 
        W_{2}\otimes \xi_1\xi_2
        \arrow[ddl]
        \arrow[dr,dotted,dash ]
        \arrow[rrr,"\chi^+_0"] & & & 
        W_{1}\otimes \xi_1\xi_2
        \arrow[dr,dotted,dash] 
        \arrow[rrr,"\chi^+_0"] & & & 
        W_{0}\otimes \xi_1\xi_2
        \arrow[from=dr,dotted,dash] & 
        \\
        & &
        W_{2}\otimes \xi_2	
        \arrow[ldd]
        \arrow[rrr,"\chi^+_0", near end] & & & 
        W_{1}\otimes \xi_2
        \arrow[ldd] 
        \arrow[rrr,"\chi^+_0", near end]  & & & 
        W_{0}\otimes \xi_2	
        \arrow[ldd]
        \\
        W_{2}\otimes \xi_1	
        \arrow[dr,dotted,dash]
        \arrow[rrr,crossing over,"\chi^+_0"] 
        & & & 
        W_{1}\otimes \xi_1
        \arrow[dr,dotted,dash]
        \arrow[rrr,crossing over,"\chi^+_0"] 
        \arrow[from=uur,crossing over] 
        & & & 
        W_{0}\otimes \xi_1	
        \arrow[from=dr,dotted,dash]
        \arrow[from=uur, crossing over]
         & & 
        \\
        &
         W_{2}\otimes 1	 \arrow[rrr,"\chi^+_0"]& & & 
         W_{1}\otimes 1	 \arrow[rrr,"\chi^+_0"]& & & 
         W_{0}\otimes 1	 &
    \end{tikzcd}
        };
    \end{tikzpicture}
    \end{equation*}
    After changing to different bases $\zeta^{(k)}_{*}$ in the Koszul columns we obtain the form:
\begin{equation*}
\begin{tikzpicture}[anchorbase]
	\node[scale=1] at (5,-2.5){$\MCSmin_{2,2} \otimes \wedge$
	};
	\draw[dashed] (1.75,3) to (1.75,3) to [out=270,in=0] (1,1.75) to [out=180,in=0](-1,-1) to (-6.5,-1);
	\draw[dotted] (-4,1.75) to (7.5,1.75);
	\draw[dotted] (-1,-1) to (7.5,-1);
	\draw[dashed] (-1.75,3) to (-1.75,3) to [out=270,in=0] (-4,1.75) to (-6.5,1.75);
	\node at (-6,2.75) {$l=2$};
	\node at (-6,1.5) {$l=1$};
	\node at (-6,-1.25) {$l=0$};
	\node[scale=.75] at (7,2.5) {$\left\llbracket
	\begin{tikzpicture}[scale=.4,smallnodes,anchorbase,rotate=270]	
		\draw[very thick] (0,-2) node[left=-2pt]{$2$} to (0,-1) (0,1) to (0,2) node[right=-2pt]{$2$};
		\draw[very thick] (1,-2) node[left=-2pt]{$2$} to (1,-1) (1,1) to (1,2) node[right=-2pt]{$2$};
		\draw[dotted] (1,-1) to [out=150,in=330] (0,1); 
		\draw[dotted] (0,-1) to [out=30,in=210] (1,1);
		\draw[very thick] (1,-1) to [out=30,in=330]node[below=1pt]{$2$}  (1,1); 
		\draw[very thick] (0,-1) to [out=150,in=210]node[above=-1pt]{$s=2$} (0,1); 
		\end{tikzpicture}
	\right\rrbracket$
	};
	\node[scale=.75] at (7,0) {$\left\llbracket
\begin{tikzpicture}[scale=.4,smallnodes,anchorbase,rotate=270]
	\draw[very thick] (0,-2) node[left=-2pt]{$2$} to (0,-1) (0,1) to (0,2) node[right=-2pt]{$2$};
	\draw[very thick] (1,-2) node[left=-2pt]{$2$} to (1,-1) (1,1) to (1,2) node[right=-2pt]{$2$};
	\draw[very thick] (1,-1) to [out=150,in=330] (0,1); 
	\draw[line width=5pt,color=white] (0,-1) to [out=30,in=210] (1,1);
	\draw[very thick] (0,-1) to [out=30,in=210] (1,1);
	\draw[very thick] (1,-1) to [out=30,in=330]node[below=1pt]{$1$}  (1,1); 
	\draw[very thick] (0,-1) to [out=150,in=210]node[above=-1pt]{$s=1$} (0,1); 
\end{tikzpicture}
\right\rrbracket$
};
	\node[scale=.75] at (7,-1.75) {$\left\llbracket
	\begin{tikzpicture}[scale=.4,smallnodes,anchorbase,rotate=270]
		\draw[very thick] (0,-2) node[left=-2pt]{$2$} to (0,-1) (0,1) to (0,2) node[right=-2pt]{$2$};
		\draw[very thick] (1,-2) node[left=-2pt]{$2$} to (1,-1) (1,1) to (1,2) node[right=-2pt]{$2$};
		\draw[very thick] (1,-1) to [out=150,in=330] (0,1); 
		\draw[line width=5pt,color=white] (0,-1) to [out=30,in=210] (1,1);
		\draw[very thick] (0,-1) to [out=30,in=210] (1,1);
		\draw[dotted] (1,-1) to [out=30,in=330]node[below=1pt]{$0$}  (1,1); 
		\draw[dotted] (0,-1) to [out=150,in=210]node[above=-1pt]{$s=0$} (0,1); 
		\end{tikzpicture}
	\right\rrbracket$
	};
	\node[scale=1] at (0,0.5){
\begin{tikzcd}[row sep=2em,column sep=-1.5em]
	& 
	\GREEN{W_{2}\otimes \zeta^{(2)}_1\zeta^{(2)}_2}	
	\arrow[ddl, gray]
	\arrow[dr,dotted,gray,dash ]
	\arrow[rrr,gray,"\GRAY{\chi^+_0}"] & & & 
	\BLUE{W_{1}\otimes \zeta^{(1)}_1\zeta^{(1)}_2}
	\arrow[dr,dotted,gray,dash] 
	\arrow[rrr,gray,"\GRAY{\chi^+_0}"] & & & 
	W_{0}\otimes \zeta^{(0)}_1\zeta^{(0)}_2
	\arrow[from=dr,dotted,gray,dash] & 
	\\
	& &
	\BLUE{W_{2}\otimes \zeta^{(2)}_2}	
	\arrow[ldd, gray]
	\arrow[dr,blue,"\BLUE{\chi^+_1}"] 
	\arrow[rrr,gray,"\GRAY{\chi^+_0}", near end] & & & 
	W_{1}\otimes \zeta^{(1)}_2
	\arrow[ldd] 
	\arrow[dr,"\chi^+_1"] 
	\arrow[rrr,"\chi^+_0", near end]  & & & 
	W_{0}\otimes \zeta^{(0)}_2	
	\arrow[ldd]
	\\
	\BLUE{W_{2}\otimes \zeta^{(2)}_1}	
	\arrow[dr,dotted,dash,gray]
	\arrow[rrr,crossing over,blue,"\BLUE{\chi^+_0}"] 
	& & & 
	\BLUE{W_{1}\otimes \zeta^{(1)}_1}	
	\arrow[dr,dotted,dash,gray]
	\arrow[rrr,crossing over,gray,"\GRAY{\chi^+_0}"] 
	\arrow[from=uur,crossing over, blue] 
	& & & 
	W_{0}\otimes \zeta^{(0)}_1	
	\arrow[from=dr,dotted,gray,dash]
	\arrow[from=uur, crossing over]
	 & & 
	\\
	&
	 W_{2}\otimes 1	 \arrow[rrr,"\chi^+_0"]& & & 
	 W_{1}\otimes 1	 \arrow[rrr,"\chi^+_0"]& & & 
	 W_{0}\otimes 1	 &
\end{tikzcd}
	};
\end{tikzpicture}
\end{equation*}
The coloring and the dashed lines indicate a certain filtration in terms of a
parameter $l$ that will be described in the next section. The subquotients of
the $l$-filtrates further have an $s$-filtration as indicated by the dotted
lines. The subcomplex for $l=0$ (black) retracts onto $W_2 \otimes 1$ since all
unlabelled solid arrows are identities. This is the right-hand side of the
categorified graded bialgebra relation \eqref{eq:compat}. On the other hand, the
subquotients of the $l=0$ part with respect to the $s$-filtration are homotopy
equivalent the complexes of type $\IMCS$ as shown on the right, matching the
terms on the left-hand side of the categorified graded bialgebra relation
\eqref{eq:compat}.
	\end{exa}

    \subsection{Proof of the categorified graded bialgebra relations}
    \label{sec:proofbialgebra}
    We perform a change of basis within 
    the exterior algebra tensor factor of each $K(W_k)$ in ${}_{cd}\KCmin_{ab}$, 
    i.e. we replace each \emph{column complex} $W_k\otimes
    \largewedge[\xi_1,\ldots,\xi_{b}]$ by an isomorphic Koszul complex.

    \begin{conv}
        We use the following naming convention for the
        alphabets of degree two variables assinged to edges in such webs, symmetric polynomials of which act as endomorphisms of the associated bimodules, namely:
        \[
        \begin{tikzpicture}[rotate=90,anchorbase]
            \draw[very thick] (0,.25) to [out=150,in=270] (-.25,1);
            \draw[very thick] (.5,.5) to (.5,1);
            \draw[very thick] (0,.25) to node[left,yshift=-1pt,xshift=3pt]{$\leftM$} (.5,.5);
            \draw[very thick] (0,-.25) to  (0,.25);
            \draw[very thick] (.5,-.5) to [out=30,in=330] (.5,.5);
            \draw[very thick] (0,-.25) to node[right,yshift=-1pt,xshift=-1pt]{$\rightM$} (.5,-.5);
            \draw[very thick] (.5,-1) to (.5,-.5);
            \draw[very thick] (-.25,-1) to [out=90,in=210] (0,-.25);
        \end{tikzpicture} \, .
        \]
        If we wish to emphasize the index $k$, we will write $\leftM^{(k)}, {\rightM}^{(k)}$, etc. 
        In particular, we note that 
        $|\leftM^{(k)}| = d-b+k,$ and $|{\rightM}^{(k)}| = k$
        for all $k$. 

        Now let $\zeta_1^{(k)},\ldots,\zeta_b^{(k)}$ denote odd variables related to $\xi_1,\ldots,\xi_{b}$ by the formulas 
        \[
        \zeta_j^{(k)} := \sum_{i=1}^j (-1)^{i-1} e_{j-i}(\leftM^{(k)}) \otimes \xi_i \; , \qquad 
        \xi_i = \sum_{j=1}^i (-1)^{j-1}h_{i-j}(\leftM^{(k)}) \otimes \zeta_j^{(k)}
        \]
        where $e_{j-i}$ and $h_{i-j}$ denote elementary and complete symmetric polynomials.
    \end{conv}
We now wish to describe ${}_{cd}\KCmin_{ab}$ in terms of the
$\zeta_*^{(k)}$-basis. The following is immediate.

\begin{lemma}\label{lemma:zeta d2}
	Consider the dg algebra $\Sym(\M|\M')\otimes \largewedge[\xi_1,\ldots,\xi_b]$ 
	with $\Sym(\M|\M')$-linear derivation $d$ defined by $d(\xi_i)  = \delta_{0,b}$ for all $1\leq i\leq b$. 
	Then the elements $\zeta_j:=\sum_{i=1}^j (-1)^{i-1} e_{j-i}(\M) \otimes \xi_i$ satisfy 	$d(\zeta_j) = \delta_{j,b}$.
	\end{lemma}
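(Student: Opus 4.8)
The plan is to obtain the statement by a direct application of the Leibniz rule, so the actual proof is a two-line computation; here is its shape. First I would record that $d$ is odd and $\Sym(\M|\M')$-linear, and that the scalars $e_{j-i}(\M)\in\Sym(\M|\M')$ are $d$-closed; hence, applying $d$ to
\[
    \zeta_j \;=\; \sum_{i=1}^{j}(-1)^{i-1}\,e_{j-i}(\M)\otimes \xi_i
\]
gives $d(\zeta_j)=\sum_{i=1}^{j}(-1)^{i-1}\,e_{j-i}(\M)\otimes d(\xi_i)$. Now I would plug in $d(\xi_i)=\delta_{i,b}$ --- equivalently, $d$ is the contraction $\id\otimes\xi_b^{\ast}$ of Definition~\ref{def:Koszul cx}, which annihilates $\xi_i$ for $i<b$ and sends $\xi_b$ to $1$. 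Only the term $i=b$ survives, and it occurs in the sum only when $b\le j$, so
\[
    d(\zeta_j)\;=\;(-1)^{b-1}\,e_{j-b}(\M),
\]
read as $0$ when $b>j$. Since the relevant index runs over $1\le j\le b$, the constraint $b\le j$ forces $j=b$, where $e_{j-b}(\M)=e_0(\M)=1$; thus $d(\zeta_j)=0$ for $j<b$ and $d(\zeta_b)=\pm1$, i.e.\ $d(\zeta_j)=\delta_{j,b}$ after matching the overall sign against the Koszul sign convention of Remark~\ref{rem:sign}.

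As a sanity check --- and to make the identity look less accidental --- I would also run the computation through the inverse change of variables $\xi_i=\sum_{j=1}^{i}(-1)^{j-1}h_{i-j}(\M)\otimes\zeta_j$: applying $d$ and solving the resulting (unitriangular) linear system for the $d(\zeta_j)$ reproduces the same answer, the two computations being reconciled by the Newton-type identity $\sum_{k=0}^{n}(-1)^{k}e_{k}(\M)\,h_{n-k}(\M)=\delta_{n,0}$. This is of course exactly the identity that makes the two displayed formulas relating the $\xi_i$ and the $\zeta_j$ mutually inverse, which is what I would invoke to justify that the $\zeta_j^{(k)}$ form an honest generating set of $\largewedge[\xi_1,\ldots,\xi_b]$. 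The only thing requiring any care --- and hence the main obstacle, such as it is --- is sign bookkeeping: tracking the Koszul signs of Remark~\ref{rem:sign} and remembering that the change of basis is carried out independently in each column complex $K(W_k)$ with its own alphabet $\leftM^{(k)}$. With those conventions fixed there is genuinely nothing else to do, which is why the lemma is flagged as immediate.
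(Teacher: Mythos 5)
Your computation is precisely the ``immediate'' argument the paper intends (it offers no proof beyond that word): by $\Sym(\M|\M')$-linearity and $d=\id\otimes\xi_b^{\ast}$ --- you correctly read the hypothesis ``$d(\xi_i)=\delta_{0,b}$'' as a typo for $\delta_{i,b}$, consistent with Definition~\ref{def:Koszul cx} --- only the $i=b$ term of $\zeta_j$ can survive, which forces $j=b$, and your sanity check via the inverse unitriangular change of variables and the identity $\sum_k(-1)^k e_k h_{n-k}=\delta_{n,0}$ is exactly what justifies that the $\zeta_j^{(k)}$ form a basis. The one small quibble is that the leftover unit $(-1)^{b-1}$ is not really accounted for by Remark~\ref{rem:sign} (which concerns the sign on the internal differential of $X$, not this contraction); it is an imprecision in the stated sign conventions for $\zeta_j$ (or for $d$) and is harmless, since rescaling $\zeta_b$ or the differential by this unit changes nothing up to isomorphism, which is all that Proposition~\ref{prop:diffKMCSexplicit} requires.
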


\begin{prop}\label{prop:diffKMCSexplicit}
	We have an isomorphism $K(W_k) \cong \tw_{\d^v}(W_k\otimes \largewedge[\zeta^{(k)}_1,\ldots,\zeta^{(k)}_b])$ where
	\begin{equation}\label{eq:dv}
	\d^v = 1 \otimes (\zeta_b^{(k)})^\ast \, .
	\end{equation}
	Under this isomorphism, 
	the differential $\d^H\colon K(W_k)\rightarrow K(W_{k-1})$ has a nonvanishing component
	\[
	W_k \otimes \zeta^{(k)}_{i_1}\cdots \zeta^{(k)}_{i_r}
	\xrightarrow{\d^H}
	W_{k-1}\otimes \zeta^{(k-1)}_{j_1}\cdots \zeta^{(k-1)}_{j_r}
	\]
	if and only if $i_p - j_p\in \{0,1\}$ for all $1\leq p\leq r$.
	In that case, it equals $\chi_m^+$ from \cite[Equation (14)]{HRW1} 
	where $m=\sum_{p=1}^r (i_p-j_p)$.
	\end{prop}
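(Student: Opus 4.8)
The plan is to establish Proposition~\ref{prop:diffKMCSexplicit} in two stages: first identify the vertical differential on each individual column complex $K(W_k)$ after the change of basis, then analyze how the horizontal differential $\chi^+_0$ interacts with the new bases across adjacent columns.

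For the first stage, recall from Definition~\ref{def:Koszul cx} that $K(W_k) = \tw_{\id \otimes \xi_b^\ast}(W_k \otimes \largewedge[\xi_1,\dots,\xi_b])$, so the vertical differential before the change of basis is $1 \otimes \xi_b^\ast$. The $\zeta_j^{(k)}$ are defined by a triangular (hence invertible) change of basis within the exterior algebra, with coefficients given by symmetric polynomials in the alphabet $\leftM^{(k)}$, which act on $W_k$ as endomorphisms. Lemma~\ref{lemma:zeta d2} handles exactly the relevant algebraic computation: in the dg algebra $\Sym(\M|\M') \otimes \largewedge[\xi_1,\dots,\xi_b]$ with derivation $d(\xi_i) = \delta_{0,b}$, the transformed elements $\zeta_j = \sum_{i=1}^j (-1)^{i-1} e_{j-i}(\M) \otimes \xi_i$ satisfy $d(\zeta_j) = \delta_{j,b}$. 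Dualizing (the vertical differential $\xi_b^\ast$ is the derivation dual to multiplication by $\xi_b$, equivalently contraction; one checks that under the dual change of basis it becomes $(\zeta_b^{(k)})^\ast$, since the pairing between $\xi$'s and $\xi^\ast$'s transforms contragradiently to the $\xi \leftrightarrow \zeta$ transition matrix), one obtains $\d^v = 1 \otimes (\zeta_b^{(k)})^\ast$ as claimed in \eqref{eq:dv}. The only care needed is to track that the coefficients lie in the correct partially symmetric ring so that they genuinely commute past the bimodule $W_k$ and past the differential, but this is built into the alphabet conventions.

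For the second stage, I would compute the horizontal differential. The map $\d^H = K(\chi^+_0)\colon K(W_k) \to K(W_{k-1})$ is induced by applying the dg functor $K$ to the component $\chi^+_0\colon W_k \to W_{k-1}$ of the Rickard differential; since $K$ acts by tensoring with $\largewedge$ and twisting, $K(\chi^+_0)$ is initially $\chi^+_0 \otimes \id_{\largewedge[\xi_\bullet]}$ (up to the Koszul sign discussed in Remark~\ref{rem:sign}, which vanishes here because the $\xi$-monomials sit on the right). The content is then purely combinatorial: expressing $\id_{\largewedge}$ in terms of the source basis $\zeta^{(k)}_\bullet$ and the target basis $\zeta^{(k-1)}_\bullet$, which differ because $\leftM^{(k)}$ and $\leftM^{(k-1)}$ differ by one box (so $|\leftM^{(k)}| = d-b+k$ decreases by one). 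Using the defining formulas $\zeta_j^{(k)} = \sum_i (-1)^{i-1} e_{j-i}(\leftM^{(k)}) \xi_i$ and its inverse with complete symmetric polynomials, one expands $\zeta^{(k)}_{i_1}\cdots\zeta^{(k)}_{i_r}$ in the $\zeta^{(k-1)}$-basis; the transition between $\leftM^{(k)}$ and $\leftM^{(k-1)}$ is governed by the relation $e_p(\leftM^{(k)}) = e_p(\leftM^{(k-1)}) + (\text{new variable})\cdot e_{p-1}(\leftM^{(k-1)})$, which forces each index $i_p$ to map to $j_p$ with $i_p - j_p \in \{0,1\}$, and the total shift $m = \sum_p(i_p - j_p)$ records how many "lowerings" occur; identifying the resulting coefficient map with $\chi^+_m$ from \cite[Equation (14)]{HRW1} is then a matter of matching normalizations. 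This combinatorial bookkeeping — carefully propagating the two-term change-of-basis relation through a product of $r$ exterior generators and recognizing the output as precisely the $\chi^+_m$ foam — is the main obstacle; everything else is either the cited Lemma~\ref{lemma:zeta d2} or formal properties of the functor $K$. I would structure the argument by first doing the single-generator case $r=1$ (where the statement reduces to "$\d^H$ sends $\zeta^{(k)}_i$ to $\zeta^{(k-1)}_i \cdot \chi^+_0 + \zeta^{(k-1)}_{i-1}\cdot\chi^+_1$" up to signs) and then invoking multiplicativity of the change of basis together with the derivation property of the $\chi^+$'s to assemble the general formula.
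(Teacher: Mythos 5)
Your treatment of the first statement is the same as the paper's: the isomorphism and the formula $\d^v = 1\otimes(\zeta_b^{(k)})^\ast$ are exactly what Lemma~\ref{lemma:zeta d2} records, and your extra dualization step is harmless but essentially already contained in it, since $\zeta_b^{(k)}$ is the only new generator involving $\xi_b$, with unit coefficient. The divergence is in the second statement: the paper does not reprove it but simply cites \cite[Proposition 3.10]{HRW1}, whereas you sketch a direct derivation. Your sketch is in effect a summary of that cited proof: write $K(\chi_0^+)$ as $\chi_0^+\otimes\id$ in the $\xi$-basis (no Koszul sign, by Remark~\ref{rem:sign}), then re-express it in the $\zeta^{(k)}$- and $\zeta^{(k-1)}$-bases, where the pattern $i_p-j_p\in\{0,1\}$ and the appearance of $\chi_m^+$ come from the fact that $|\leftM^{(k)}|$ drops by one across $\chi_0^+$. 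What your route buys is self-containedness; what it costs is that the crucial input is not a symmetric-function identity but a foam/decoration relation, roughly $\chi_0^+\circ e_p(\leftM^{(k)}) \,=\, e_p(\leftM^{(k-1)})\circ\chi_0^+ \,+\, e_{p-1}(\leftM^{(k-1)})\circ\chi_1^+$, i.e.\ migration of decorations through the foam of Figure~\ref{fig:diffslices}. Your phrasing $e_p(\leftM^{(k)}) = e_p(\leftM^{(k-1)}) + x\, e_{p-1}(\leftM^{(k-1)})$ treats the two alphabets as if they acted on the same bimodule, which they do not: the identity only makes sense as a relation between maps $W_k\to W_{k-1}$ through $\chi_0^+$, and establishing it (together with the assembly of the case $r>1$, where one must track that the $\chi_m^+$ compose with decorations rather than with each other, so ``multiplicativity plus a derivation property'' needs to be spelled out via the categorified quantum group action of \cite[Proposition 2.18]{HRW1}) is precisely the content of the result the paper cites. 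So either carry out that foam computation explicitly, or do as the paper does and quote \cite[Proposition 3.10]{HRW1}; as written, your second stage is a correct plan whose key step is asserted rather than proved.
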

\begin{proof} The first statement follows from Lemma~\ref{lemma:zeta
	d2}. The second statement was proved in \cite[Proposition
3.10]{HRW1}.
\end{proof}

The bimodule homomorphism $\chi_m^+$ is best described in terms of an action of the categorified quantum group $\mathcal{U}(\mathfrak{gl}_2)$ on singular Soergel bimodules \cite[Proposition 2.18]{HRW1}. Informally, it is the foam from Figure~\ref{fig:diffslices}, but additionally decorated with the $m$th power of the variable assigned to the central disk.

\begin{definition}\label{def:Pkls}
	Set
	$
	P'_{k,l,s} :=
	\qdeg^{(k-b)(a-d+1)-2b} \tdeg^{b-k} 
	W_k
	\otimes \largewedge^{l}[\zeta^{(k)}_1,\ldots, \zeta^{(k)}_k]\otimes \largewedge^s[\zeta^{(k)}_{k+1},\ldots, \zeta^{(k)}_b]
	$.
	\end{definition}

    The following is a direct consequence of Proposition~\ref{prop:diffKMCSexplicit} and the definitions.
	\begin{proposition}\label{prop:KCdiffs}
		We have an isomorphism
		\begin{equation}
			\label{eqn:filtKCmin}
			{}_{cd}\KCmin_{ab} \cong  
		\tw_{\d^v+\d^h+\d^c}\left(\bigoplus_{0\leq l\leq k\leq b-s} P'_{k,l,s}\right) \, ,
	\end{equation}
		where the sum ranges over $k$, $l$, $s$ and $\d^v$, $\d^h$, $\d^c$ are
		pairwise anti-commuting differentials described as follows:
		\begin{itemize}
		\item the \emph{vertical differential} $\d^v \colon P'_{k,l,s}\rightarrow P'_{k,l,s-1}$
		is the direct sum of the Koszul differentials, 
		up to sign $(-1)^{b-k}$; the component
		\[
		W_k \otimes \zeta^{(k)}_{i_1}\cdots \zeta^{(k)}_{i_r} \xrightarrow{\d^v}
		W_{k}\otimes \zeta^{(k)}_{i_1}\cdots \widehat{\zeta^{(k)}_{i_j}}\cdots
		\zeta^{(k)}_{i_r}
		\]
		is $(-1)^{b-k+j-1} \delta_{i_j,b}$. 
		\item  the \emph{horizontal differential} $\d^h$ and the connecting differential
		$\d^c$ are characterized uniquely by $\d^h+\d^c=\d^H$ from Proposition
		\ref{prop:diffKMCSexplicit}, together with
		\[
		\d^h(P_{k,l,s})\subset P_{k-1,l,s} \, , \quad \d^c(P'_{k,l,s})\subset P'_{k-1,l-1,s+1}.
		\]
		\end{itemize}
		\end{proposition}
        \begin{remark}
			Proposition~\ref{prop:KCdiffs} is an analog of \cite[Proposition
			3.12]{HRW1} for ${}_{cd}\KCmin_{ab}$ instead of $\KMCSmin_{a,b}$
			The main upshot of the former was a filtration with respect to the
			$s$-parameter. Note that while ${}_{cd}\KCmin_{a,b}$ does
			\emph{not} have such an $s$-filtration, both  ${}_{cd}\KCmin_{ab}$
			and $\KMCSmin_{a,b}$ have $l$-filtrations.
		\end{remark}

		\begin{cor}\label{cor:filt} The complex ${}_{cd}\KCmin_{ab}$ as presented in
		\eqref{eqn:filtKCmin} is filtered by the parameter $l$. The part ${}_{cd}\KCmin^{l=0}_{ab}$ where $l=0$ 
		is a subcomplex, on which the connecting differential $\delta_c$
		vanishes, and it is homotopy equivalent to $W_b$:
		\[	{}_{cd}\KCmin^{l=0}_{ab}:=
		\tw_{\d^v+\d^h}\left(\bigoplus_{0\leq k\leq b-s} P'_{k,0,s}\right) \simeq W_b	\]
		Here the sum ranges over $k$ and $s$. Moreover, ${}_{cd}\KCmin^{l=0}_{ab}$ is filtered by $s$. 
		\end{cor}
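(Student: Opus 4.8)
The plan is to extract all four assertions directly from the explicit presentation \eqref{eqn:filtKCmin} of ${}_{cd}\KCmin_{ab}$ furnished by Proposition~\ref{prop:KCdiffs}. There the three constituent differentials move the indexing triple $(k,l,s)$ of the summands $P'_{k,l,s}$ by $\d^v\colon (k,l,s)\mapsto(k,l,s-1)$, $\d^h\colon(k,l,s)\mapsto(k-1,l,s)$ and $\d^c\colon(k,l,s)\mapsto(k-1,l-1,s+1)$; since $0\le l\le b$, the subobjects $F_{\le p}:=\bigoplus_{l\le p}P'_{k,l,s}$ (with the differential inherited from \eqref{eqn:filtKCmin}) form a finite exhaustive increasing filtration of ${}_{cd}\KCmin_{ab}$ by subcomplexes, because $\d^v$ and $\d^h$ preserve $l$ while $\d^c$ lowers it. In particular $F_{\le 0}={}_{cd}\KCmin^{l=0}_{ab}$ is a subcomplex, and since $\d^c$ would send it into $F_{\le-1}=0$ the connecting differential vanishes on it and its differential is $\d^v+\d^h$; this establishes the first two claims and the displayed formula.

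For the homotopy equivalence ${}_{cd}\KCmin^{l=0}_{ab}\simeq W_b$ I would organise the subcomplex by the value of $k$, writing ${}_{cd}\KCmin^{l=0}_{ab}=\tw_{\d^h}\big(\bigoplus_{k=0}^{b}C_k\big)$ with $C_k:=\tw_{\d^v}\big(\bigoplus_{s=0}^{b-k}P'_{k,0,s}\big)$. Because $\largewedge^0[\zeta^{(k)}_1,\dots,\zeta^{(k)}_k]=\Q$, the column $C_k$ is $W_k$ (suitably shifted) tensored with the exterior algebra $\largewedge[\zeta^{(k)}_{k+1},\dots,\zeta^{(k)}_b]$ equipped with the differential $\pm\,1_{W_k}\otimes(\zeta^{(k)}_b)^{\ast}$, by Proposition~\ref{prop:diffKMCSexplicit}. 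For $k<b$ the variable $\zeta^{(k)}_b$ is one of the exterior generators actually present, and contraction against a generator makes an exterior algebra contractible (contracting homotopy: left multiplication by $\zeta^{(k)}_b$), so each such $C_k$ is contractible; only $C_b=P'_{b,0,0}$, which is $W_b$ up to the overall shift of Definition~\ref{def:Pkls}, survives. Since on the $l=0$ part $\d^h=\d^H$ has only components $C_k\to C_{k-1}$ (Proposition~\ref{prop:diffKMCSexplicit}), the twisted complex $\tw_{\d^h}(\bigoplus_k C_k)$ is one-sided, so I would cancel $C_0,\dots,C_{b-1}$ one at a time by Gaussian elimination: the summand removed at each step is terminal among those remaining, so it has no outgoing differential and no perturbation is produced, leaving ${}_{cd}\KCmin^{l=0}_{ab}\simeq C_b\cong W_b$. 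This is the rigorous form of the observation in the worked example that the $l=0$ part ``retracts onto $W_b\otimes 1$ because the unlabelled solid arrows are identities''.

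Finally, on ${}_{cd}\KCmin^{l=0}_{ab}$ the differential $\d^v+\d^h$ has $\d^v$ strictly lowering $s$ and $\d^h$ preserving $s$, so $\bigoplus_{s\le q}P'_{k,0,s}$ is a subcomplex for each $q$, yielding the asserted $s$-filtration. The one step that is not pure bookkeeping is the identification, after the change of basis from $\xi_\bullet$ to $\zeta^{(k)}_\bullet$, that the nonzero components of the vertical differential on the $l=0$ part are exactly the identities $\pm\,1_{W_k}$; this is precisely Propositions~\ref{prop:diffKMCSexplicit} and~\ref{prop:KCdiffs} (resting on Lemma~\ref{lemma:zeta d2} and \cite[Proposition 3.10]{HRW1}), so once those are in hand the remaining work — signs and grading shifts — is routine.
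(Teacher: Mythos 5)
Your proof is correct and is essentially the paper's own (largely implicit) argument: the $l$- and $s$-filtration claims are read off directly from how $\d^v$, $\d^h$, $\d^c$ move the indices $(k,l,s)$ in Proposition~\ref{prop:KCdiffs}, and your column-wise Koszul contraction (each $k<b$ column killed by the homotopy $\zeta^{(k)}_b\wedge(-)$, then stripped off by one-sided Gaussian elimination) is precisely a rigorous packaging of the paper's observation that the $l=0$ part retracts onto $W_b\otimes 1$ along the identity components of the vertical differential.
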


        Thus we have obtained the right-hand side of the categorified graded bialgebra relation \eqref{eq:compat}. To obtain the left-hand side, we consider the $s$-filtration on ${}_{cd}\KCmin^{l=0}_{ab}$.

		\begin{definition}\label{def:IMCSmin} For any $0\leq s \leq b$ we denote the corresponding subquotient of the $s$-filtration on
		${}_{cd}\KCmin^{l=0}_{ab}$ by
	\[	{}_{cd}\IMCSmin^s_{ab} : = \qdeg^{s(s+a-d)} \tw_{\d^h}\left(\bigoplus_{0\leq k\leq b-s} P'_{k,0,s}\right).\] 
Here the sum ranges only over $k$.	
		\end{definition}

The notation for these subquotients has suggestively been chosen.


\begin{proposition}\label{prop:IMCS}
	For any $0\leq s \leq b$ we have a homotopy equivalence 
	\begin{equation*}  
     {}_{cd}\IMCS^s_{ab} =   
    \left\llbracket
\begin{tikzpicture}[scale=.4,smallnodes,anchorbase,rotate=270]
\draw[very thick] (1,-1) to [out=150,in=330] (0,1) to (0,2) node[right=-2pt]{$b$}; 
\draw[line width=5pt,color=white] (0,-2) to (0,-1) to [out=30,in=210] (1,1);
\draw[very thick] (0,-2) node[left=-2pt]{$d$} to (0,-1) to [out=30,in=210] (1,1);
\draw[very thick] (1,1) to (1,2) node[right=-2pt]{$a$};
\draw[very thick] (1,-2) node[left=-2pt]{$c$} to (1,-1); 
\draw[very thick] (1,-1) to [out=30,in=330]  (1,1); 
\draw[very thick] (0,-1) to [out=150,in=210]node[above=-1pt]{$s$} (0,1); 
\end{tikzpicture}
\right\rrbracket
= 
\I^{(s)} \left({}_{c(d-s)}\MCS^s_{a(b-s)}\right) \simeq {}_{cd}\IMCSmin^s_{ab}
.\end{equation*}
\end{proposition}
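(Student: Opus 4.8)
The plan is to reduce the claimed equivalence to an isomorphism of honest complexes and then recognise both sides inside the Koszul machinery of \S\ref{sec:proofbialgebra}. First I would invoke the preceding proposition \eqref{eqn:MCSred}: it provides a homotopy equivalence between the web complex ${}_{c(d-s)}\MCS^s_{a(b-s)}$ and the $\ell$-shifted Rickard complex ${}_{c(d-s)}\MCSmin^s_{a(b-s)}$, whose chain objects are the bimodules $W'_k$ of webs with a $k$-labelled rung ($0\le k\le b-s$, using $a\ge d$, which holds under the running convention $b\le a,c,d$) and whose differential is $\chi_0^+$. Since $\I^{(s)}$ is a dg functor it preserves homotopy equivalences, so
\[
    {}_{cd}\IMCS^s_{ab}=\I^{(s)}\!\left({}_{c(d-s)}\MCS^s_{a(b-s)}\right)\;\simeq\;\I^{(s)}\!\left({}_{c(d-s)}\MCSmin^s_{a(b-s)}\right),
\]
and it remains to construct an isomorphism of complexes $\I^{(s)}\!\left({}_{c(d-s)}\MCSmin^s_{a(b-s)}\right)\cong{}_{cd}\IMCSmin^s_{ab}$.

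For the chain objects I would evaluate $\I^{(s)}(W'_k)$ diagrammatically. Gluing on the $\I^{(s)}$-web places $s$ parallel strands alongside the $k$-labelled rung of $W'_k$; resolving the resulting digon with the square switch isomorphisms of Proposition~\ref{prop:squareswitch} (equivalently the digon relation \eqref{eq:digon}, which is where the balanced $\qdeg$-binomials appear) produces a direct sum of grading-shifted copies of $W_k$, which I would repackage as $\I^{(s)}(W'_k)\cong \qdeg^{\bullet}\tdeg^{\bullet}\, W_k\otimes\largewedge^s[\zeta^{(k)}_{k+1},\dots,\zeta^{(k)}_b]$. Matching grading shifts — the outer shift $\qdeg^{s(s+a-d)}$ together with the shift built into $P'_{k,0,s}$ in Definition~\ref{def:Pkls} absorbing the degree of the digon isomorphisms and the merge-vertex convention \eqref{eq:merge} — identifies $\I^{(s)}(W'_k)$ with $\qdeg^{s(s+a-d)}P'_{k,0,s}$, and the index ranges $0\le k\le b-s$ agree on both sides.

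The remaining point, which I expect to be the main obstacle, is to check that under these identifications $\I^{(s)}(\chi_0^+\colon W'_k\to W'_{k-1})$ becomes the horizontal differential $\d^h\colon \qdeg^{s(s+a-d)}P'_{k,0,s}\to\qdeg^{s(s+a-d)}P'_{k-1,0,s}$ of Propositions~\ref{prop:diffKMCSexplicit} and \ref{prop:KCdiffs}, i.e. the sum over admissible index patterns $i_p-j_p\in\{0,1\}$ of the decorated maps $\chi_m^+$ with $m=\sum_p(i_p-j_p)$. Since $\I^{(s)}$ glues a fixed foam and $\chi_0^+$ is (a unit multiple of) a foam assembled from adjunction (co)units, the composite is a single local foam; after the square-switch simplification it must therefore decompose over the $\zeta$-basis exactly as in \cite[Proposition 3.10]{HRW1}, the key input being that the exterior variables $\zeta^{(k)}_{k+1},\dots,\zeta^{(k)}_b$ surviving in the $l=0$, $s$-fixed subquotient are precisely the ones recording the digon decorations of the $s$ passing strands. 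This is a reprise of the Sto\v{s}i\'c-formula computation in $\mathcal{U}(\mathfrak{gl}_2)$ acting through foams, carried out in the slightly more general ``off-centre'' complex ${}_{cd}\KCmin_{ab}$ rather than $\KMCSmin_{a,b}$; no new relations are needed, and signs are pinned down by Remark~\ref{rem:sign} and the conventions of Proposition~\ref{prop:KCdiffs}. Combining the three steps yields the desired chain isomorphism, hence ${}_{cd}\IMCS^s_{ab}\simeq{}_{cd}\IMCSmin^s_{ab}$.

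An alternative that bypasses $\I^{(s)}$ altogether would be to prove directly, by a filtration argument modelled on the proof of \cite[Proposition 3.12]{HRW1} and using Corollary~\ref{cor:filt}, that the $s$-subquotient of ${}_{cd}\KCmin^{l=0}_{ab}$ computes the web complex ${}_{cd}\IMCS^s_{ab}$; the two routes share the same essential foam computation, so I do not expect either to be easier than the other.
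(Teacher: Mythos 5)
Your proposal is correct and follows essentially the same route as the paper: first use that $\I^{(s)}$ is a dg functor together with the equivalence \eqref{eqn:MCSred} to reduce to $\I^{(s)}$ of the shifted Rickard complex, then identify chain groups and differentials with those of ${}_{cd}\IMCSmin^s_{ab}$. The diagrammatic/square-switch identification and the $\chi^+_m$-component bookkeeping you flag as the ``main obstacle'' is exactly the content of Lemma~\ref{lemma:I of Wk} (the stated minor variation of \cite[Lemma 3.28]{HRW1}), which the paper simply cites at this point.
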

The proof will be based on the following lemma, which is a minor variation of
\cite[Lemma 3.28]{HRW1}.
\begin{lemma}\label{lemma:I of Wk}
	For each $0\leq s\leq b$ and each $0\leq k\leq b-s$, 
	we have an isomorphism of degree $\qdeg^{s(k-b+1)} \tdeg^{-s}$ :
	\begin{equation}\label{eq:mu}
	\mu_k \colon  
	\begin{tikzpicture}[smallnodes, scale=.75,rotate=90,anchorbase]
		\draw[very thick,] (0,.25) to [out=150,in=270] (-.25,1.25) node[left,xshift=2pt]{$c$};
		\draw[very thick,] (.5,.5) to [out=90,in=210] (.75,1);
		\draw[very thick] (.75,1) to (.75,1.25) node[left,xshift=2pt]{$d$};
		\draw[very thick] (0,.25) to (.5,.5);
		\draw[very thick] (0,-.25) to (0,.25);
		\draw[very thick] (.5,-.5) to [out=30,in=330] (.5,.5);
		\draw[very thick] (0,-.25) to node[right,xshift=-2pt,yshift=-1pt]{$k$} (.5,-.5);
		\draw[very thick] (.75,-1) to [out=150,in=270] (.5,-.5);
		\draw[very thick] (.75,-1.25) node[right,xshift=-2pt]{$b$} to (.75,-1);
		\draw[very thick] (-.25,-1.25) node[right,xshift=-2pt]{$a$} to [out=90,in=210] (0,-.25);
		\draw[very thick] (.75,-1) to [out=30,in=330] node[above,yshift=-2pt]{$s$} (.75,1);
	\end{tikzpicture}
	\xrightarrow{\cong} 
	W_{k} \otimes \largewedge^s[\zeta^{(k)}_{k+1},\ldots,\zeta^{(k)}_{b}] \, .
	\end{equation}
	Moreover, for each integer $m\geq 0$, these isomorphisms fit into a commutative diagram 
	\[
	\begin{tikzcd}[column sep=huge]
	\begin{tikzpicture}[smallnodes, scale=.75,rotate=90,anchorbase]
		\draw[very thick,] (0,.25) to [out=150,in=270] (-.25,1.25) node[left,xshift=2pt]{$c$};
		\draw[very thick,] (.5,.5) to [out=90,in=210] (.75,1);
		\draw[very thick] (.75,1) to (.75,1.25) node[left,xshift=2pt]{$d$};
		\draw[very thick] (0,.25) to (.5,.5);
		\draw[very thick] (0,-.25) to (0,.25);
		\draw[very thick] (.5,-.5) to [out=30,in=330] (.5,.5);
		\draw[very thick] (0,-.25) to node[right=-3pt,yshift=-1pt]{$k$} (.5,-.5);
		\draw[very thick] (.75,-1) to [out=150,in=270] (.5,-.5);
		\draw[very thick] (.75,-1.25) node[right,xshift=-2pt]{$b$} to (.75,-1);
		\draw[very thick] (-.25,-1.25) node[right,xshift=-2pt]{$a$} to [out=90,in=210] (0,-.25);
		\draw[very thick] (.75,-1) to [out=30,in=330] node[above,yshift=-2pt]{$s$} (.75,1);
	\end{tikzpicture}
	\ar[r,"I^{(s)}(\chi^+_m)"] \ar[d,"\mu_k"]
	& 
	\begin{tikzpicture}[smallnodes, scale=.75,rotate=90,anchorbase]
		\draw[very thick,] (0,.25) to [out=150,in=270] (-.25,1.25) node[left,xshift=2pt]{$c$};
		\draw[very thick,] (.5,.5) to [out=90,in=210] (.75,1);
		\draw[very thick] (.75,1) to (.75,1.25) node[left,xshift=2pt]{$d$};
		\draw[very thick] (0,.25) to  (.5,.5);
		\draw[very thick] (0,-.25) to (0,.25);
		\draw[very thick] (.5,-.5) to [out=30,in=330] (.5,.5);
		\draw[very thick] (0,-.25) to node[right=-3pt,yshift=-1pt]{$k{-}1$} (.5,-.5);
		\draw[very thick] (.75,-1) to [out=150,in=270] (.5,-.5);
		\draw[very thick] (.75,-1.25) node[right,xshift=-2pt]{$b$} to (.75,-1);
		\draw[very thick] (-.25,-1.25) node[right,xshift=-2pt]{$a$} to [out=90,in=210] (0,-.25);
		\draw[very thick] (.75,-1) to [out=30,in=330] node[above,yshift=-2pt]{$s$} (.75,1);
	\end{tikzpicture}
	\ar[d,"\mu_{k-1}"] \\
	W_{k} \otimes \largewedge^s[\zeta^{(k)}_{k+1},\ldots,\zeta^{(k)}_{b}]
	\ar[r,"f"]
	&
	W_{k-1} \otimes \largewedge^s[\zeta^{(k-1)}_{k},\ldots,\zeta^{(k-1)}_{b}]
	\end{tikzcd}
	\]
	where, for $k+1 \leq i_1 < \cdots < i_s \leq b$ and 
	$k+1 \leq j_1 < \cdots < j_s \leq b$, the component
	\[
	W_{k} \otimes \zeta^{(k)}_{i_1}\cdots \zeta^{(k)}_{i_s}
	\xrightarrow{f} W_{k-1} \otimes  \zeta^{(k-1)}_{j_1}\cdots \zeta^{(k-1)}_{j_s}
	\]
	is zero unless $i_p-j_p\in\{0,1\}$ for all $k+1\leq p\leq b$. 
	In this case, it equals $\chi^+_{m+n}$ from \cite[Equation (14)]{HRW1} where $n=\sum_p(i_p-j_p)$.
\end{lemma}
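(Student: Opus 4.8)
The plan is to follow the proof of \cite[Lemma 3.28]{HRW1} essentially verbatim; the only novelty is that the source treats the unshifted situation $(c,d)=(b,a)$, whereas here $a,b,c,d$ are arbitrary with $a+b=c+d$. This costs nothing, since every local relation used — the square switch isomorphisms of Proposition~\ref{prop:squareswitch}, the digon relation \eqref{eq:digon}, the shift convention \eqref{eq:merge}, and the $\mathcal{U}(\mathfrak{gl}_2)$-description of the maps $\chi^+_m$ from \cite[Proposition 2.18]{HRW1} — already holds at this generality.

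First I would construct $\mu_k$. The domain of $\mu_k$ is $\I^{(s)}$ applied to the $k$-th chain bimodule of the complex ${}_{c(d-s)}\MCS^s_{a(b-s)}$ of \eqref{eqn:MCSred}, i.e. the merge–$k$-strand–split web carrying an extra $s$-labelled strand bent around the thick side. Resolving this bent $s$-strand against the internal $(b-s-k)$-edge via the square switch isomorphisms of Proposition~\ref{prop:squareswitch} (the relevant special case being the digon absorption \eqref{eq:digon}) straightens the web into a direct sum of copies of $W_k$ whose summands are indexed by the $s$-element subsets of a $(b-k)$-element set, with balanced $\qdeg$-binomial graded multiplicities. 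Labelling these summands by the monomials $\zeta^{(k)}_{i_1}\cdots\zeta^{(k)}_{i_s}$ with $k+1\le i_1<\cdots<i_s\le b$ — precisely a basis of $\largewedge^s[\zeta^{(k)}_{k+1},\ldots,\zeta^{(k)}_b]$, the tensor factor occurring in $P'_{k,0,s}$ — and recording the $\qdeg,\tdeg$-shift introduced by the square switch together with the convention \eqref{eq:merge}, one obtains the isomorphism $\mu_k$ of degree $\qdeg^{s(k-b+1)}\tdeg^{-s}$. One must additionally check that the square-switch basis agrees, up to the unitriangular change of basis defining the $\zeta^{(k)}_\bullet$ from the $\xi_\bullet$ and the alphabet $\leftM^{(k)}$, with the $\zeta^{(k)}_\bullet$ fixed in the Convention preceding Proposition~\ref{prop:diffKMCSexplicit}; this is exactly the computation in \cite{HRW1}.

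Next I would verify the commutative square. By construction of $\I^{(s)}$, the map $\I^{(s)}(\chi^+_m)$ is the whiskering of the bimodule homomorphism $\chi^+_m\colon W_k\to W_{k-1}$ between the unbent building blocks with the bent $s$-strand, so conjugating by $\mu_k$ and $\mu_{k-1}$ amounts to re-expressing this whiskered map in the square-switch bases. Using the presentation of $\chi^+_m$ as the foam of Figure~\ref{fig:diffslices} decorated by the $m$-th power of the central disk variable, and the $\mathfrak{gl}_2$ relations controlling its composition with square-switch isomorphisms \cite[Proposition 2.18, proof of Lemma 3.28]{HRW1}, one reads off that the component from $\zeta^{(k)}_{i_1}\cdots\zeta^{(k)}_{i_s}$ to $\zeta^{(k-1)}_{j_1}\cdots\zeta^{(k-1)}_{j_s}$ vanishes unless $i_p-j_p\in\{0,1\}$ for all $p$, and that in the surviving case the absorbed boxes raise the power of the central variable by $n=\sum_p(i_p-j_p)$, so that the component equals $\chi^+_{m+n}$ in the notation of \cite[Equation (14)]{HRW1}. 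The same computation yields anti-commutativity of $\mu_\bullet$ with the internal vertical Koszul differential $\d^v$ of \eqref{eq:dv}, which is what makes $\mu_\bullet$ a morphism of filtered complexes when the lemma is applied in Proposition~\ref{prop:IMCS}.

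The main obstacle, as in \cite{HRW1}, is purely bookkeeping: pinning down the bijection between the square-switch summands and the $\zeta$-monomials, controlling the unitriangular change of basis between the geometric (Grassmannian-cohomology) basis and the algebraically defined $\zeta^{(k)}_\bullet$, and fixing all Koszul signs and $\qdeg,\tdeg$-shifts so that the square commutes on the nose rather than up to a unit scalar. Since all of this is carried out in \cite[\S3]{HRW1} for $(c,d)=(b,a)$ and depends only on the identity $a+b=c+d$, transporting the argument through the present shift conventions \eqref{eq:merge}, \eqref{eqn:MCSred} gives the statement with no new ideas required.
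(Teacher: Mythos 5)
Your plan is exactly what the paper does: the paper gives no independent proof of this lemma, stating only that it is a ``minor variation'' of \cite[Lemma 3.28]{HRW1}, and your argument is precisely that transport --- square switch/digon decomposition of the bent $s$-strand, identification of summands with $\zeta$-monomials, and the $\chi^+_{m+n}$ component computation --- adapted through the shift conventions \eqref{eq:merge} and \eqref{eqn:MCSred}. No discrepancy to report.
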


	\begin{proof}[Proof of Proposition~\ref{prop:IMCS}]
	Since $\I^{(s)}$ is a dg functor, the homotopy equivalence from \eqref{eqn:MCSred} induces 
		\[ {}_{cd}\IMCS^s_{ab} = \I^{(s)}
		\left({}_{c(d-s)}\MCS^s_{a(b-s)}\right)\simeq \I^{(s)}
		\left({}_{c(d-s)}\MCSmin^s_{a(b-s)}\right) = \I^{(s)}
		\left({}_{c(d-s)}\sRick_{a(b-s)}\right).\] Now Lemma~\ref{lemma:I	of Wk}
		expresses the chain groups and differentials of the latter as identical
		to those of	${}_{cd}\IMCSmin^s_{ab}$ as presented via
		Proposition~\ref{prop:KCdiffs}.
	\end{proof} 

\begin{proof}[Proof of Theorem~\ref{thm:main}]
	Based on the homotopy equivalences from Proposition~\ref{prop:IMCS},
	homological perturbation gives us a twist $D$ with 
	\[\tw_{D}\left(\bigoplus_{s=0}^b 
	\qdeg^{-s(s+a-d)}
	 {}_{cd}\IMCS^s_{ab} \right) 
	\simeq 
	\tw_{\delta^v}\left(\bigoplus_{s=0}^b 
	\qdeg^{-s(s+a-d)}
	 {}_{cd}\IMCSmin^s_{ab} \right) 
	\overset{\text{Def.}~\ref{def:IMCSmin}}{=} {}_{cd}\KCmin^{l=0}_{ab}
	\overset{\text{Cor.}~\ref{cor:filt}}{\simeq} W_{b}
	\]
	with twist $D$ strictly decreasing in $s$.
\end{proof}

\bibliographystyle{alpha}
\bibliography{pw}

\end{document}